\g@addto@macro\normalsize{%
  \setlength\abovedisplayskip{8pt plus 3pt minus 3pt}
  \setlength\belowdisplayskip{8pt plus 3pt minus 3pt}
  \setlength\abovedisplayshortskip{6pt plus 3pt minus 2pt}
  \setlength\belowdisplayshortskip{6pt plus 3pt minus 2pt}
}
\date{\today}
\numberwithin{equation}{section}
\def\({\bigl(}
\def\){\bigr)}
\newtheorem{thm}{Theorem}[section]
\newtheorem{cor}[thm]{Corollary}
\newtheorem{lemma}[thm]{Lemma}
\newtheorem{conj}[thm]{Conjecture}
\theoremstyle{definition} 
\newtheorem{remark}[thm]{Remark}
\def\abs#1{\lvert#1\rvert}
\def\dfrac#1#2{\lower0.15ex\hbox{\large$\textstyle\frac{#1}{#2}$}}
\def\({\bigl(}
\def\){\bigr)}
\def\st{\mathrel{:}}
\def\leq{\leqslant}
\def\geq{\geqslant}
\let\eps=\varepsilon
\def\U{\boldsymbol{U}}
\def\W{\boldsymbol{W}}
\def\X{\boldsymbol{X}}
\def\Y{\boldsymbol{Y}}
\def\Z{\boldsymbol{Z}}
\def\T{\boldsymbol{T}}
\def\calA{\mathcal{A}}
\def\calF{\mathcal{F}}
\def\calG{\mathcal{G}}
\def\thetavec{\boldsymbol{\theta}}
\def\wvec{\boldsymbol{w}}
\def\xvec{\boldsymbol{x}}
\def\yvec{\boldsymbol{y}}
\def\uvec{\boldsymbol{u}}
\def\mysup{\operatorname{sup}}
\def\E{\operatorname{\mathbb{E}}}
\def\one{\mathbbm{1}}
\def\deg{\operatorname{deg}}
\def\Var{\operatorname{Var}}
\def\Cov{\operatorname{Cov}}
\def\Prob{\mathbb{P}}
\def\Pr{\mathbb{P}}
\def\ran{\operatorname{ran}}
\def\Reals{{\mathbb{R}}}
\def\Bin{\operatorname{Bin}}
\def\swap{\operatorname{S}}
\def\Tran{\boldsymbol{T}}%random trees
\def\Tset {\mathcal{T}}                               %tree  sets
\newcommand{\aut}{\textsc{Aut}}
\newcommand{\Aut}[1]{\textsc{Aut}\left(#1\right)}
\newcommand{\Autr}[1]{\textsc{Aut}_r\left(#1\right)}
\newcommand{\Autrabs}[1]{\left|\Autr{#1}\right|}
\newcommand{\Autabs}[1]{\left|\textsc{Aut}\left(#1\right)\right|}
\newcommand{\Autsmall}[0]{\textsc{Aut}_{\rm{small}}}
\newcommand{\Bsmall}{\mathcal{B}_{\rm{small}}}
\newcommand{\prob}{\mathbb{P}}
\newcommand{\Pro}[1]{\prob\left(#1\right)}
\newcommand{\expec}{\mathbb{E}}
\newcommand{\Exp}[1]{\expec\left[#1\right]}
\newcommand{\Vari}[1]{\textup{Var}\left[#1\right]}
\newcommand{\Cova}[1]{{\operatorname{Cov}\left(#1\right)}}
\title{Distribution of  tree parameters  by martingale approach \thanks{The first author's research is  supported by Australian Research Council Discovery Project DP190100977.
and  by Australian Research  Council  Discovery Early Career Researcher Award DE200101045.
The third author's research is supported by the grant 20-31-70025 of Russian Foundation for Basic Research. 
}}
\author{
Mikhail Isaev  \qquad Angus Southwell\\
\small School of Mathematics\\[-0.8ex]
\small Monash University\\[-0.8ex]
\small Clayton, VIC, Australia\\[-0.8ex]
\small\tt \qquad mikhail.isaev@monash.edu   \qquad  angus.southwell@monash.edu
\and
Maksim Zhukovskii\\
\small Laboratory of Combinatorial and Geometric
Structures \\[-0.8ex]
\small Moscow Institute of Physics and Technology\\[-0.8ex]
\small  Dolgoprudny,  Moscow Region, Russian Federation;\\
\small Caucasus mathematical center, Adyghe State University\\[-0.8ex]
\small Maykop,   Republic of Adygea, Russian Federation;\\
\small   The Russian Presidential Academy of National Economy and Public Administration\\[-0.8ex]
 \small   Moscow, Russian Federation;\\
\small\tt zhukmax@gmail.com 
}
\date{}
\begin{document}

\maketitle

\abstract{
 For a uniform random labelled tree, we find the limiting
distribution of tree parameters which are stable (in some sense) with respect to
local perturbations of the tree structure.  The proof is based  on the martingale central limit theorem and the Aldous--Broder algorithm.   In particular, our general result implies the asymptotic normality of the number of occurrences of any given small pattern and the asymptotic log-normality of the number of automorphisms.
}

\section{Introduction}

The distribution of various random variables associated with trees is widely studied in the literature. Typically, the tree  parameters that behave additively exhibit normal distribution, which was observed by
Drmota~\cite[Chapter 3]{Drmota2009}, Janson~\cite{Janson2016}, and Wagner~\cite{Wagner2014}. 
For example, the number of leaves or, more generally, the number of vertices of a given degree 
 satisfies a Central Limit Theorem (CLT)  for many random models: labelled trees, unlabelled trees, plane trees, forests; see  Drmota and Gitteberger~\cite{DG1999} and references therein for more details.

The classical limit theorems of  probability theory are impractical for random trees due to the dependency of adjacencies. Instead, one employs more elaborate tools  such as  
 the analysis of generating functions  \cite{BR1983},  the conditional limit theorems  \cite{Holst1981}, and  Hwang's quasi-power theorem  \cite{Hwang1998}.  
 These methods  are particularly efficient for parameters that admit  a recurrence relation, which is often the case for trees.

 The martingale  CLT \cite{Brown1971} is a powerful tool
  that has  been extensively used to study random structures. 
   Nevertheless,  it is surprisingly overlooked  in the context of the  distribution of tree parameters 
   and   the vast majority of known results rely on the methods mentioned in the paragraph above.    We are aware of only a few applications of  the martingale  CLT:
  Smythe \cite{Smythe1996} and  Mahmoud  \cite{Mahmoud2003} analysed growth of leaves  in the random trees  related to urn models;    M\'ori  \cite{Mori2005}  examined the max degree for Barab\'asi--Albert random trees;     Fen and Hu \cite{FH2011} considered the Zagreb index for random recursive trees;  Sulzbach \cite{Sulzbach2017}   studied the path length in a random model encapsulating binary search trees, recursive trees and plane-oriented recursive trees.

   %and  Cooper, McGrae,  Zito,  \cite{CMZ2009} proved a seven-point concentration result for the empire chromatic
%number of uniform random labelled trees.

 %In this paper we restrict our attention to a simple random model       of unrooted labelled random trees     

 We  prove a CLT for an arbitrary tree parameter using the martingale approach. Unlike other methods, the parameter is  not required to be of a specific form or to satisfy a recurrence relation.  Our only assumption is that the parameter is stable with respect to small perturbations  in the sense that precisely specified below.    We also  bound the rate of convergence  to the normal distribution.   In this paper we restrict our attention to  unrooted  labelled trees even though martingales  appear naturally in many other random settings.   This is  sufficient to demonstrate the power of the new approach and cover several important applications that go beyond the toolkit of existing  methods.

 Let  $\Tset_n$ be the  set of  trees whose vertices are labelled by  $[n]:=\{1,\ldots, n\}$ and $\Tran$ be a uniform random element of $\Tset_n$.  By Cayley's formula, we have $|\Tset_n| = n^{n-2}$. 
 For a tree $T \in \Tset_n$
  and two vertices $i,j\in[n]$, let 
 $d_T(i,j)$ denote the distance between   $i$ and $j $ that is the number of edges in the unique path from $i$ to $j$ in 
 $T$.  For $A,B \subseteq [n]$, let  
  \[d_T(A,B) :=  \min_{u \in A,v \in B} d_T(u,v).\]
% 
% 
%  and let
% \[
% 	\text{$[i,j]_T :=$ the unique path from $i$ to  $j$ in the tree $T$.}
% \]
 Throughout the paper we identify graphs and their edge sets.  Consider an operation defined 
 $\swap_{i}^{jk}$ as follows.
  If $ij \in T$ and $ik \notin T$, let  $\swap_{i}^{jk} T$ be the graph  obtained from $T$ by deleting the edge $ij$ and  inserting
the edge $ik$; see Figure \ref{fig_pert} below. 
% 
% 
% 
% the tree perturbation operator $\swap_{i}^{jk}$ by
%\begin{equation*}
%	\swap_{i}^{jk} T -  ik   = T - ij.
%\end{equation*}
%In other words, $\swap_{i}^{jk} T$ is  obtained from $T$ by deleting the edge $ij$ and  inserting
%the edge $ik$; see Figure \ref{fig_pert} below.
 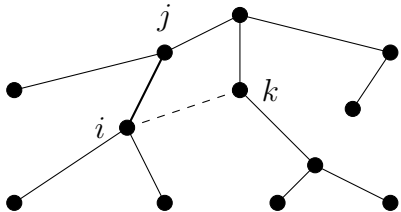
\begin{figure}[h!]
\centering
\begin{tikzpicture}[x=1cm,y=0.5cm]
\node (i) [label=left:{$i$}] at (1.5,2) {};
\node (j) [label=above:{$j$}] at (2,4) {};
\node (k) [label=right:{$k$}] at (3,3) {};

\node (a) at (4.5,0) {};
\node (b) at (4.2,1.5) {};
\node (c) at (5,4) {};

\node (x) at (3,5) {};
\node (y) at (0,3) {};
\node (z)  at (5.5,2.5) {};
\node (t)  at (0,0) {};
\node (u)  at (2,0) {};
\node (v)  at (3.5,0) {};
\node (w)  at (5.7,0) {};

\draw [fill] (w) circle (0.1cm);
\draw [-, thick] (z.center)--(w.center);

\draw [-, thick] (i.center)--(j.center);
\draw[-, dashed,] (i.center) -- (k.center);

\draw [-] (b.center)--(a.center);

\draw [-,] (x.center) -- (j.center)--(y.center);
\draw [-] (t.center) -- (i.center)--(u.center);

\draw [-] (x.center) -- (k.center)--(b.center)--(v.center);
\draw [-] (x.center) -- (c.center)--(z.center);

\draw [fill] (i) circle (0.1cm);
 \draw [fill] (j) circle (0.1cm);  \draw [fill] (k) circle (0.1cm);   \draw [fill] (a) circle (0.1cm);   \draw [fill] (b) circle (0.1cm);\draw [fill] (c) circle (0.1cm); \draw [fill] (x) circle (0.1cm); \draw [fill] (y) circle (0.1cm); \draw [fill] (z) circle (0.1cm); \draw [fill] (t) circle (0.1cm); \draw [fill] (u) circle (0.1cm); \draw [fill] (v) circle (0.1cm);

 %\node   at (5.5,1.5)  {$\Longleftrightarrow$};

\end{tikzpicture}

\smallskip

\caption{$\swap_{i}^{jk}$ removes $ij$ from a tree and adds  $ik$ (dashed)
} \label{fig_pert}
\end{figure}

\noindent
Observe that $\swap_{i}^{jk}T$ is a tree
 if and only if   the path from $j$ to $k$ in $T$ does not contain the vertex $i$.  
We refer the operation  $\swap_{i}^{jk}$ as  a \emph{tree perturbation}.

 %In the following, such 
%triples $(i,j,k)$ are called  \emph{valid} for $T$ or, simply, \emph{$T$-valid}.  

Let $\Reals^+$  denote the set of non-negative real numbers.  %and $\Naturals = \{0,1,\ldots\}$.
For    $\alpha \in \Reals^+$, we say a tree parameter  $F: \mathcal T_n \rightarrow \Reals$ is  \emph{$\alpha$-Lipschitz}  if  
\begin{equation*} % \label{def_Lipschitz}
 	|F(T) - F(\swap_{i}^{jk} T) | \leq \alpha. 
\end{equation*}
for   all $T \in \Tset_n$  and  triples $(i,j,k)$ that $\swap_{i}^{jk} T$ is a tree. 
%Our first result gives a large deviation bound for $F(\Tran)$ when a tree parameter $F$ satisfies the above property.
% and 
% \[
% 	r_F=   \max_{T \in \Tset_n} |F(T) - \E F(\Tran)|.
% \]
% 
% 
% \begin{thm}\label{T:con}
%Let $F: \mathcal T_n \rightarrow \Reals$ be a symmetric tree parameter. If $F$ is $\alpha$-Lipschitz 
%for some   $\alpha \in \Reals^+$, then
%\[
%	\Pr\left(|F(\Tran) - \E F(\Tran)| \geq t \right)
%	 \leq 2 e^{-\frac{2t^2}{n \alpha^2}}
%\]
%\end{thm}
%To get the asymptotic distribution of $F(\Tran)$, 
We also require that the effects on the parameter  $F$  of  sufficiently distant perturbations  $\swap_{i}^{jk}$ and 
 $\swap_{a}^{bc}$   \emph{superpose}; that is 
  \begin{equation*} %\label{def_super}
 	F(\swap_{i}^{jk}\swap_{a}^{bc}T)	 - F(T) =
   \left(F(\swap_{i}^{jk}T) - F(T)\right) + 
   \left(F(\swap_{a}^{bc}T) - F(T)\right).
 \end{equation*}
  For   $\rho \in \Reals^+$,  we say  $F$ is \emph{$\rho$-superposable}  if 
%  \begin{equation}\label{def_super}
%  	|F(T) - F(\swap_{i}^{jk} T) - F(\swap_{a}^{bc} T )  + F(\swap_{i}^{jk}  \swap_{a}^{bc} T ) | \leq \rho(d_T(\{j,k\},\{b,c\}))
%  \end{equation} 
the above equation  holds  for all $T \in \Tset_n$   and  triples $(i,j,k)$, $(a,b,c)$ such that 
  $\swap_{i}^{jk}T$, $\swap_{a}^{bc}T$,  $\swap_{i}^{jk}\swap_{a}^{bc}T$ are trees and  
  $d_T(\{j,k\}, \{b,c\})\geq  \rho$.
     Note that   the sets $\{j,k\}$ and $\{b,c\}$ are at the  same distance in all four trees
$T$, $\swap_{i}^{jk}T$, $\swap_{a}^{bc}T$, and $\swap_{i}^{jk}\swap_{a}^{bc}T$. Thus,
     $d_T(\{j,k\}, \{b,c\})$ is an appropriate measure for the 
  distance between  the two tree perturbations  $\swap_{i}^{jk}$ and  $\swap_{a}^{bc}$.

  For a random variable $X$ let
\[
	\delta_{\rm K} \left[X\right] := \sup_{t \in \Reals} \left| \Pr\left(X  - \E[X]  \leq t (\Var [X])^{1/2} \right) - \Phi(t)\right|,
\] 
where 
 $\Phi(t) =  (2\pi)^{-1/2} \int_{-\infty}^t e^{-x^2/2} dx$.  In other words, $\delta_{\rm K} [X]$ is the Kolmogorov distance between the scaled random variable $X$ and the standard normal distribution.  We say $X=X_n$ is \emph{asymptotically normal}  if $\delta_{\rm K} [X]  \rightarrow 0$ as $n \rightarrow \infty$.

 In the following theorem,  $F$, $\alpha$, and $\rho$ 
 stand for sequences parametrised by a positive integer $n$ that is  $(F,\alpha, \rho)  = (F_n, \alpha_n, \rho_n)$. 
We omit the subscripts  for notation simplicity.   %Let   $\Tran$ be a uniform random element of $\Tset_n$. 
 All asymptotics in the paper refer to $n\rightarrow \infty$  and the notations $o(\cdot)$, $O(\cdot)$, $\Theta(\cdot)$ have the standard meaning.
 
\begin{thm}\label{T:main}
 Let  a tree parameter $F: \mathcal T_n \rightarrow \Reals$ 
   be $\alpha$-Lipschitz and  $\rho$-superposable for some $\alpha> 0$ and $\rho\geq 1$.
   Assume also that,      for a  fixed constant $\varepsilon>0$,
  \[
  	  \frac{n \alpha^3}{(\Vari{F(\Tran)})^{3/2}} + \frac{n^{1/4}\alpha \rho}{(\Vari{F(\Tran)})^{1/2}} 
  	 =  O(n^{-\varepsilon}).
  \]
 Then,  $F(\Tran)$ is asymptotically normal. Moreover, 
  %  $\dfrac{F(\Tran) - \E F(\Tran)}{\sigma}$ converges in distribution to $\mathcal{N}(0,1)$
   $\delta_{\rm K}[F(\Tran)] = O(n^{-\varepsilon'})$  	for any $\varepsilon' \in (0,\varepsilon)$.
\end{thm}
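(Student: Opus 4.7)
The approach is to apply a quantitative martingale central limit theorem to a Doob martingale built from the Aldous--Broder construction of $\Tran$. Recall that the Aldous--Broder algorithm on $K_n$ takes a sequence of independent uniformly random vertices $W_0, W_1, \ldots \in [n]$ and, for each $v\in [n]\setminus\{W_0\}$, records the edge $W_{\tau_v-1}W_{\tau_v}$, where $\tau_v$ is the first time the walk visits $v$; the resulting edge set is a uniform element of $\Tset_n$. Truncating at $N = \lceil C n\log n\rceil$ for $C$ large enough, every vertex is visited before time $N$ with probability $1-o(n^{-k})$ for any fixed $k$, and on this event $\Tran$ is a deterministic function of $(W_0,\ldots,W_N)$. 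I set $\mathcal{F}_t = \sigma(W_0,\ldots,W_t)$ and form the Doob martingale $M_t = \E[F(\Tran)\mid \mathcal{F}_t]$ with differences $D_t = M_t-M_{t-1}$, so that $M_0=\E F(\Tran)$ and $M_N=F(\Tran)$. A quantitative martingale CLT then reduces $\delta_{\rm K}[F(\Tran)]$ to controlling two quantities: the Lyapunov-type sum $\sigma^{-3}\sum_t \E|D_t|^3$ and the deviation $\sigma^{-2}\E|V_N-\sigma^2|$ of the predictable quadratic variation $V_N=\sum_t \E[D_t^2\mid \mathcal{F}_{t-1}]$ from $\sigma^2=\Var F(\Tran)$.

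The Lipschitz hypothesis handles the first quantity via a one-step resampling coupling: fix $W_0,\ldots,W_{t-1},W_{t+1},\ldots,W_N$ and replace $W_t$ by an independent uniform vertex $W_t'$. Since a single walk step can shift the visited-or-not status of only one vertex at each subsequent time, the two resulting Aldous--Broder trees can be transformed into one another by $O(1)$ tree perturbations $\swap_i^{jk}$ in expectation. Telescoping the $\alpha$-Lipschitz condition gives $|D_t|=O(\alpha)$ and $\sum_{t=1}^N \E|D_t|^3 = O(n\alpha^3\log n)$, which together with the $n\alpha^3/\sigma^3$ hypothesis yields the desired $n^{-\Theta(\varepsilon)}$ bound on the first quantity.

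For the second quantity, I use the $\rho$-superposability hypothesis. Since $\sigma^2 = \E V_N$, it suffices to bound $\Var V_N$, which reduces to estimating the covariances $\Cov(\E[D_s^2\mid\mathcal{F}_{s-1}], \E[D_t^2\mid\mathcal{F}_{t-1}])$ for $s\neq t$. Via the coupling above, $D_t$ is, up to a negligible error, a sum of $F(\swap_i^{jk}T)-F(T)$-type increments associated with $O(1)$ tree perturbations localised around the walker's position at time $t$. When the supports of the perturbations at times $s$ and $t$ are at tree-distance at least $\rho$, superposability causes their joint effect to split additively and forces the corresponding covariance to vanish. The remaining contribution from close pairs is controlled by counting pairs of perturbations within tree-distance $\rho$, weighted by the fourth-moment factor from the product of two $D^2$ terms, and produces exactly the $n^{1/4}\alpha\rho/\sigma$ term appearing in the hypothesis.

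The main obstacle will be the cascade analysis underlying the one-step Aldous--Broder coupling. A single resampled walk step can in principle alter the first-visit times of arbitrarily many subsequent vertices, so the coupling must be set up so that (i) the expected number of altered tree edges is bounded uniformly in $n$, and (ii) the edit between the two trees can be realised by a localised sequence of tree perturbations $\swap_i^{jk}$; this localisation is exactly what allows $\rho$-superposability to cancel the off-diagonal terms in $\Var V_N$.
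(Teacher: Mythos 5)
Your plan and the paper both build a Doob martingale from the Aldous--Broder construction and feed it into a quantitative martingale CLT, so the overall strategy is the same; the coupling analysis in your final paragraph is also essentially sound (resampling $W_t$ changes the tree in a deterministically bounded number of edges, so $|D_t|=O(\alpha)$ on the coverage event, and the extra $\log n$ from $N\approx n\log n$ walk steps is harmless). There are, however, two concrete gaps, the first of which affects the conclusion as stated.

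The main gap is the rate claim $\delta_{\rm K}[F(\Tran)] = O(n^{-\varepsilon'})$ for every $\varepsilon'<\varepsilon$. Every quantitative martingale CLT you could invoke here (Mourrat's Theorem 1.5, which the paper uses, or its relatives due to Heyde--Brown, Haeusler, Bolthausen) enters $Q=\sigma^{-2}V_N$ through $\bigl(\E|Q-1|^p\bigr)^{1/(2p+1)}$. Bounding $\Var V_N$ is the case $p=2$, which only gives $\delta_{\rm K}=O(n^{-c\varepsilon})$ with $c$ strictly bounded away from $1$; it cannot reach $\varepsilon'$ arbitrarily close to $\varepsilon$. The paper gets the full rate by proving exponential concentration of $Q$ around $1$: it applies McDiarmid's martingale inequality to the auxiliary martingale $\E[Q\mid\mathcal{F}'_i]$ (Lemma 4.3), deriving $\E|Q-1|^p=O(n^{-2p\tilde\varepsilon})$ simultaneously for all $p$ and then sending $p\to\infty$. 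That concentration argument requires a pathwise bound on $\operatorname{ran}_{i-1}\bigl[\Var[W_{2n-2}\mid\calF'_i]\bigr]$, a much stronger statement than a variance bound, and this is exactly where $\rho$-superposability is spent (Lemma 5.3(b,d,e)). With only a $\Var V_N$ bound you obtain asymptotic normality but not the stated rate.

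The second gap is the "close pair" count you invoke to control the off-diagonal contributions. For each pair of times $(s,t)$, you need the probability that the two perturbations land within tree-distance $\rho$ of one another, and this is governed by the local growth of $\Tran$, i.e.\ by $\beta(T)=\max_{i,d}|\Gamma^d_T(i)|/d$, via the inequality $\sum_j\one_T^\rho(i,j)\le\rho^2\beta(T)$. You need a high-probability bound on $\beta(\Tran)$; the paper proves $\Pr(\beta(\Tran)\ge\log^4 n)=e^{-\omega(\log n)}$ as a separate Theorem 1.4, via another martingale argument on the height profile. Without such a bound, the superposability cancellation cannot be turned into a quantitative estimate. (For the record, the paper also differs in using the exact two-stage form of Aldous--Broder with $n-1$ random choices plus a random relabelling, which trades the $\log n$ walk length and coverage event for a two-phase martingale and a degree truncation to $\Tset_n^{\log n}$ — but that choice of construction is not where the proof lives or dies.)
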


To clarify the assumptions \cref{T:main},  we consider a simple application to the aforementioned parameter   $L(T)$,   the number of leaves in a tree $T$. 
The distribution of $L(\Tran)$ was derived for the first time by Kolchin \cite{Kolchin1977}, using generating functions and the connection to the Galton--Watson branching process. \cref{T:main} immediately leads to the following result:
\begin{cor}\label{c:leaves}
	$L(\Tran)$ is asymptotically normal and  $\delta_{\rm K}[L(\Tran)] = O(n^{-1/4+\epsilon})$
	for any $\epsilon>0$.
%	and 
%	$
%	 	\Pr\left(|L(\Tran) - \E L(\Tran)| \geq t \right)
%	 \leq 2 e^{-\frac{2t^2}{n}}.
%	$
\end{cor}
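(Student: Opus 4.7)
The plan is to verify the hypotheses of \cref{T:main} for $F = L$ with $\alpha = 2$ and $\rho = 1$, so that the only substantial input is a sharp estimate of $\Vari{L(\Tran)}$. Writing $L(T) = \sum_{v \in [n]} \Ind{\deg_T(v)=1}$, I first note that the tree perturbation $\swap_i^{jk}$ modifies only the degrees of $j$ (decreased by one) and $k$ (increased by one). Consequently, at most two summands in the expression for $L$ can change value, giving $|L(T) - L(\swap_i^{jk}T)| \leq 2$ and hence the $2$-Lipschitz property.

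For $1$-superposability, observe that if $d_T(\{j,k\},\{b,c\}) \geq 1$ then the two pairs of vertices are disjoint. Since the perturbation $\swap_a^{bc}$ leaves the degrees at $\{j,k\}$ unchanged and vice versa, and since $L$ is a sum of vertex-local indicators, the changes caused by the two perturbations are supported on disjoint vertex sets and add, i.e.
\[
L(\swap_i^{jk}\swap_a^{bc}T) - L(T) = \bigl(L(\swap_i^{jk}T) - L(T)\bigr) + \bigl(L(\swap_a^{bc}T) - L(T)\bigr).
\]

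For the variance, a standard application of Cayley-type enumeration gives $\E[L(\Tran)] = n(1-1/n)^{n-2} \sim n/e$, and computing the probability that two fixed vertices are both leaves yields $\Vari{L(\Tran)} = \Theta(n)$ (see the classical analysis of \cite{Kolchin1977}). Substituting $\alpha = 2$, $\rho = 1$, and $\Vari{L(\Tran)} = \Theta(n)$ into the hypothesis of \cref{T:main} gives
\[
\frac{n\alpha^3}{\Vari{L(\Tran)}^{3/2}} + \frac{n^{1/4}\alpha\rho}{\Vari{L(\Tran)}^{1/2}} = O(n^{-1/2}) + O(n^{-1/4}),
\]
so we may take $\varepsilon$ arbitrarily close to $1/4$ in \cref{T:main}, which then yields $\delta_{\rm K}[L(\Tran)] = O(n^{-1/4+\epsilon})$ for any $\epsilon > 0$. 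No step presents a real obstacle; the only non-trivial ingredient is the classical variance estimate, which is routine for the leaf count.
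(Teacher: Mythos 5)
Your proof is correct and follows essentially the same approach as the paper: verify the Lipschitz and superposability hypotheses directly, cite the known $\Theta(n)$ variance, and plug into \cref{T:main}. The only (cosmetic) difference is that the paper uses $\alpha = 1$ rather than your $\alpha = 2$, since $j$ can only \emph{gain} leaf status (its degree drops, and it cannot have been a leaf or $\swap_i^{jk}T$ would be disconnected) while $k$ can only \emph{lose} leaf status, so the two changes never compound; but $\alpha = 2$ is still $O(1)$ and yields the same bound from \cref{T:main}.
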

\begin{proof}
For any tree $T\in \Tset_n$ and a triple $(i,j,k)$ that  $\swap_{i}^{jk}T$ is a tree,
the numbers of leaves of $T$ and $\swap_{i}^{jk}T$ differ by at most one. 
Thus, $L$ is $\alpha$-Lipchitz on $\Tset_n$ with $\alpha=1$. 
%Applying \cref{T:con}, we get the 
%required bound for $\Pr\left(|L(\Tran) - \E L(\Tran)| \geq t \right)$.

Next, observe that if 
$T$, $\swap_{i}^{jk} T$, $\swap_{a}^{bc} T$, and   $\swap_{i}^{jk}  \swap_{a}^{bc} T$
are trees and  $\{j,k\}\cap \{b,c\} = \emptyset$, then 
\[
	L(T) - L(\swap_{i}^{jk} T) - L(\swap_{a}^{bc} T )  + L(\swap_{i}^{jk}  \swap_{a}^{bc} T ) =0.
\]
Indeed, the trees $T$, $\swap_{i}^{jk} T$, $\swap_{a}^{bc} T$, $\swap_{i}^{jk}  \swap_{a}^{bc} T$ have the same sets of leaves except possibly vertices $\{j,k,b,c\}$.   However,  any vertex from $\{j,k,b,c\}$  contributes to the same number of negative and positive terms in the  left-hand side of the above. This  implies that   $L$ is $\rho$-superposable with $\rho=1$.

 It is well known that $\Var [L(\Tran)] =(1+o(1))n/e$; see, for example, \cite[Theorem 7.7]{Moon1970}. Then,  all the assumptions of \cref{T:main} are satisfied  with 
 $ \alpha = \rho = 1$ and $\varepsilon = 1/4$.
 This completes the proof.
\end{proof}

\begin{remark}
 The rates of convergence $\delta_K[F(\Tran)]= O(n^{-1/4+\epsilon})$ are typical in applications of \cref{T:main} because,	for many examples,  $\Var [F(\Tran)]$  is linear   and
$\alpha$, $\rho$ are bounded by some power of $\log n$.  
Wagner \cite{Wagner2019} pointed out that 
Hwang's quasi-power theorem  \cite{Hwang1998} leads to a better estimate  $\delta_K[L(\Tran)]= O(n^{-1/2+\epsilon})$ for the number of leaves. This matches the rates of convergence in the classical Berry--Esseen result (for a sum of i.i.d. variables) and, thus,  is likely optimal. It remains an open question whether the bound $\delta_K[F(\Tran)]= O(n^{-1/2+\epsilon})$
always hold for an arbitrary  $\alpha$-Lipschitz and   $\rho$-superposable  tree parameter  $F$ 
(assuming the variance is linear and  $\alpha$ and $\rho$ are not too large).
\end{remark}

The asymptotic normality of the number of vertices   in $\Tran$ with a given degree 
is proved identically to \cref{c:leaves}.   However, for many other applications, a tree parameter  $F$ might behave badly on a small set of trees.  Then, \cref{T:main} does not work directly since $\alpha$ and $\rho$ are too large. For example, a single perturbation 
 $\swap_{i}^{jk}$ can destroy a lot of  paths on three vertices in a tree with large degrees.
 To overcome this difficulty, one can apply  \cref{T:main} to  a parameter  $\tilde{F}$, which is related to $F$, but ignores the vertices with degrees larger $\log n$. This trick does not change the limiting distribution   because the trees with large degrees are rare:  Moon \cite[formula (7.3)]{Moon1970} showed  that, for any $d \in [n]$,
\begin{equation}\label{eq:Moon} 
 \Pr(\Tran \text{ has a vertex with degree $>d$} ) \leq n/d! 
 \end{equation} 
       Similarly, one can restrict attention to the trees 
 for which the neighbourhoods of  vertices do not grow very fast.  
 Let 
 \begin{equation}\label{def_beta}
 	\beta(T) = \max_{i,d \in [n]} \frac{|\{j \in[n] \st d_T(i,j) = d\}|}
 	   {d}.
 \end{equation}
 In this paper, we prove the following result, which might be of independent interest.
 \begin{thm}\label{T:beta}
 $ \displaystyle
   	\Pr \left( \beta(\Tran) \geq \log^4 n \right)  \leq e^{-\omega(\log n)}.
   	$
 \end{thm}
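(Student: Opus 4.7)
The plan is to reduce the bound to a critical Galton--Watson process conditioned on total size, and then apply a generating-function tail bound. By vertex-transitivity of $\Tran$ and a union bound over $(i,d) \in [n] \times [n-1]$,
\[
\Pr\!\left(\beta(\Tran) \geq \log^4 n\right) \leq n^2 \max_{d \geq 1} \Pr\!\left(N_d(1) > d \log^4 n\right),
\]
where $N_d(i) := |\{j \in [n] \st d_\Tran(i,j) = d\}|$, so it suffices to show that this maximum is $e^{-\omega(\log n)}/n^{O(1)}$.

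First, I would invoke the classical fact that the rooted shape of $\Tran$ rooted at vertex $1$ is distributed as that of a Poisson$(1)$ Galton--Watson tree $\Gamma$ conditioned on $|\Gamma|=n$: both assign any rooted shape $\tau$ with $n$ vertices a probability proportional to $1/\Autabs{\tau}$, and the normalizing constants agree by Cayley's formula and the Borel identity $\Pr(|\Gamma|=n) = n^{n-1}e^{-n}/n!$. Since $N_d(1)$ depends only on the rooted shape, it has the same distribution as $Z_d := |\{v \in \Gamma \st \text{depth}(v) = d\}|$ under $\Gamma$ conditioned on $|\Gamma|=n$. Hence
\[
\Pr(N_d(1) > d \log^4 n) \leq \frac{\Pr(Z_d > d\log^4 n)}{\Pr(|\Gamma|=n)} = O(n^{3/2})\, \Pr(Z_d > d\log^4 n)
\]
by Stirling, where the right-hand probability is under the unconditioned $\Gamma$.

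Next, I would apply the Chernoff-type bound $\Pr(Z_d > m) \leq s^{-m}\phi_d(s)$, where $\phi_d(s) := \E[s^{Z_d}] = f^{(d)}(s)$ is the $d$-fold iterate of $f(s) = e^{s-1}$. Taking $s = 1+1/d$ and $u_k := \phi_k(s) - 1$, the recursion $u_{k+1} = e^{u_k}-1 = u_k + u_k^2/2 + O(u_k^3)$ with $u_0 = 1/d$ gives $1/u_{k+1} \geq 1/u_k - 1/2 - O(u_k)$. An inductive ansatz $u_k \leq 2/(2d-k-C)$ for a universal constant $C$ then yields $u_d = O(1/d)$, and hence $\phi_d(1+1/d) = O(1)$. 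Combined with $(1+1/d)^{-d\log^4 n} \leq e^{-\log^4 n/2}$ (using $\log(1+x) \geq x/2$ on $[0,1]$), this gives $\Pr(Z_d > d\log^4 n) \leq e^{-\log^4 n/2 + O(1)}$.

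Putting everything together, $\Pr(\beta(\Tran) \geq \log^4 n) \leq n^{7/2} e^{-\log^4 n/2 + O(1)} = e^{-\omega(\log n)}$. The main technical step is the quantitative control of the iteration $u_{k+1} = e^{u_k}-1$ at the critical scaling $u_0 = 1/d$: one must verify that the higher-order corrections in the expansion of $e^{u_k}-1$ do not accumulate and cause $u_d$ to blow up across the full $d$ iterations. This matches the continuous solution $u(k) = 2/(2d-k)$ of $du/dk = u^2/2$, and a careful inductive bound $u_k = O(1/(2d-k))$ suffices; after that, the remaining steps reduce to the straightforward union bound already displayed.
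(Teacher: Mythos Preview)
Your proof is correct and takes a genuinely different route from the paper. The paper (\cref{S:balls}) stays entirely inside the labelled-tree model: it computes the exact conditional law of the level sizes $X_k=|\Gamma_{\Tran}^k(v)|$ (shifted binomial, via \cref{bin_leafs}), constructs a dominating sequence $X_k'$ so that $X_k'-k$ is a martingale (\cref{prime_approximation}), truncates the increments of that martingale using a bespoke binomial-tail lemma (\cref{cut_bin}) to make them bounded, and then applies McDiarmid-type martingale concentration (\cref{T:concentration}) inductively in $k$. You instead pass through the Poisson$(1)$ Galton--Watson correspondence, pay $O(n^{3/2})$ to remove the size conditioning, and bound the unconditioned level sizes by a Chernoff/PGF argument on the iterate of $f(s)=e^{s-1}$.

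Your route is noticeably shorter; the paper's is self-contained within its martingale machinery and does not need to quote the GW correspondence. One remark on the only delicate step in your argument: the control of the iteration $u_{k+1}=e^{u_k}-1$ can be made completely rigorous with no asymptotic expansion and no constant $C$, via the elementary inequality
\[
\frac{1}{e^{u}-1}\;\geq\;\frac{1}{u}-\frac{1}{2}\qquad\text{for all }u>0,
\]
which follows since $g(u):=u+2+(u-2)e^{u}$ satisfies $g(0)=g'(0)=0$ and $g''(u)=ue^{u}\geq 0$. This gives $1/u_{k+1}\geq 1/u_k-1/2$, hence $u_k\leq 2/(2d-k)$ for all $0\leq k\leq d$ by a one-line induction, so $\phi_d(1+1/d)=1+u_d\leq 1+2/d\leq 3$. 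This replaces your ``careful inductive bound'' and removes any worry about accumulated higher-order terms.
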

% For convenience, further in the paper we allow
% $e^{-\omega(\log n)}$ to be equal $0$. In particular,
% $a_n = (1+e^{-\omega(\log n)}) b_n$ means 
% that  $(a_n  b_{n}^{-1}  -1) n^{c} \rightarrow 0$ for any constant $c$.
\begin{remark} 
The distribution of  the height profiles in branching processes is a well-studied topic.
 In particular, the number of vertices in $\Tran$ at distance  at most $d$ from a given vertex
 was already considered by Kolchin \cite{Kolchin1977}. However, we could not find a suitable large deviation   bound for $\beta(\Tran)$  in the literature. In fact, the constant $4$ in the exponent of the logarithm in the bound above  is not optimal, but sufficient for our purposes.     
\end{remark}

 The structure of the paper is as follows. 
 In \cref{S:patterns} we analyse the number of occurrences of an arbitrary tree pattern.  
For various interpretations of  the notion ``occurrence'',  the asymptotic normality in this problem was established by Chysak, Drmota, Klausner, Kok \cite{CDKK2008} and  Janson~\cite{Janson2016}.
Applying  \cref{T:main}, we not only confirm these results, but also allow 
much more general types of occurrences. In particular, we prove the 
asymptotical normality for  the number of induced subgraphs isomorphic to a given  tree of fixed  size 
and  for the number of paths of length  up to $n^{1/8 -\eps}$.  Both of these applications  go beyond the setup of  \cite{CDKK2008, Janson2016}.
In \cref{S:auto} we derive  the distribution of the number of automorphisms of $\Tran$ and  confirm the conjecture  by Yu \cite{Yu2012}. To our knowledge, this  application of \cref{T:main} is  also not covered by any of the previous results.

We prove  \cref{T:main} in  \cref{S:Construction}, using a martingale construction based on 
the Aldous--Broder algorithm \cite{Aldous1990} for generating  random labelled spanning trees of a given graph. \cref{S:m_theory} contains the necessary background on the theory of martingales.
We also use martingales to prove \cref{T:beta} in \cref{S:balls}. 
This proof is independent of   \cref{S:Construction} and, in fact,  \cref{T:beta} is one of the ingredients that 
we need for our main result, \cref{T:main}.   
We also use \cref{T:beta} in the application to long induced paths to
 bound the number of the paths  affected by one perturbation;  see \cref{T:CLT_paths}. 

Tedious technical calculations of the variance for the pattern  and automorphism  counts
are given in Appendix A and  Appendix B.

%%%%%%%%%%%%%%%%%%%%%%%%%%%%%%%%%%%%%%
%%%%%%%%%%%%%%%%%%%%%%%%%%%%%%%%%%%%%%
%%%%%%%%%%%%%%%%%%%%%%%%%%%%%%%%%%%%%%
%%%%%%%%%%%%%%%%%%%%%%%%%%%%%%%%%%%%%%
%%%%%%%%%%%%%%%%%%%%%%%%%%%%%%%%%%%%%%%%%%%%%%%%%%%%%%%%%%%%
%%%%%%%%%%%%%%%
\section{Pattern counts}\label{S:patterns}

In this section we apply \cref{T:main} to analyse the distribution of the number of  occurrences of a tree pattern $H$
as an induced subtree in uniform random labelled tree $\Tran$.
To our knowledge,  the strongest results for this problem were obtained by  Chysak, Drmota, Klausner, Kok \cite{CDKK2008} and  Janson~\cite{Janson2016}.

Chysak, Drmota, Klausner, Kok \cite{CDKK2008} consider occurrences of a pattern $H$  as an induced subgraph of  a tree $T$ with  the additional restriction that  the \emph{internal} vertices in the pattern match the degrees the corresponding vertices in $T$. That is, the other edges of $T$ can only be adjacent to leaves of $H$.  For example, the tree $T$ on  Figure \ref{F:patterns} 
contains only three paths on three vertices in this sense, namely $T[\{1,5,8\}]$, $T[\{1,3,6\}]$,
and $T[\{3,6,13\}]$. 
In particular, the induced path on vertices $1$, $2$, $7$ is not counted since the internal vertex $2$ is adjacent to  $4$. 
The result by Chysak, Drmota, Klausner, Kok is given below.

 \begin{figure}[h!]
\centering
\begin{tikzpicture}[x=1cm,y=0.5cm]
\node (i)[label=left:{$7$}]  at (1.5,2) {};
\node (j)[label=$2$]   at (2,4) {};
\node (k)[label=left:{$5$}]  at (3,3) {};

\node (a)[label={[shift={(0.2,-0.2)}] $12$}]  at (4.5,0) {};
\node (b)[label={[shift={(0.1,-0.2)}]$8$}]  at (4.2,1.5) {};
\node (c)[label=right:{$3$}]  at (5,4) {};

\node (x)[label={[shift={(0.1,-0.2)}]$1$}]   at (3,5) {};
\node (y)[label=$4$] at (0,3) {};
\node (z)[label=right:{$6$}]  at (5.5,2.5) {};
\node (t)[label=$9$]  at (0,0) {};
\node (u)[label={[shift={(0.2,-0.2)}] $10$}]   at (2,0) {};
\node (v)[label={[shift={(-0.2,-0.2)}] $11$}]   at (3.5,0) {};

\node (w)[label={[shift={(0.2,-0.2)}] $13$}]  at (5.7,0) {};

\draw [fill] (w) circle (0.1cm);
\draw [-, thick] (z.center)--(w.center);

\node[above] at (3,6.5) {Tree $T$};

\draw [-, thick] (i.center)--(j.center);
%\draw[-, dashed,] (i.center) -- (k.center);

\draw [-] (b.center)--(a.center);

\draw [-,] (x.center) -- (j.center)--(y.center);
\draw [-] (t.center) -- (i.center)--(u.center);

\draw [-] (x.center) -- (k.center)--(b.center)--(v.center);
\draw [-] (x.center) -- (c.center)--(z.center);

\draw [fill] (i) circle (0.1cm);
 \draw [fill] (j) circle (0.1cm);  \draw [fill] (k) circle (0.1cm);   \draw [fill] (a) circle (0.1cm);   \draw [fill] (b) circle (0.1cm);\draw [fill] (c) circle (0.1cm); \draw [fill] (x) circle (0.1cm); \draw [fill] (y) circle (0.1cm); \draw [fill] (z) circle (0.1cm); \draw [fill] (t) circle (0.1cm); \draw [fill] (u) circle (0.1cm); \draw [fill] (v) circle (0.1cm);

 %\node   at (5.5,1.5)  {$\Longleftrightarrow$};
\begin{scope}[shift={(10,0)}]
		\node (x) at (0,5) {};
		\node (k)  at (0,2.5) {};
		\node (l)  at (0,0) {};
		\draw[-] (x.center) -- (k.center) --(l.center);
		%\draw[vert] (0,0) circle [radius=0.1];
		%\draw[vert2] (0,1) circle [radius=0.1];
		\node[above] at (0,6.5) {Pattern $H$};
		 \draw [fill] (x) circle (0.1cm);  \draw [fill] (k) circle (0.1cm);  \draw [fill] (l) circle (0.1cm);
		\end{scope}
\end{tikzpicture}

\medskip

\caption{A labelled tree $T$ and a pattern $H$
} \label{F:patterns}
\end{figure}
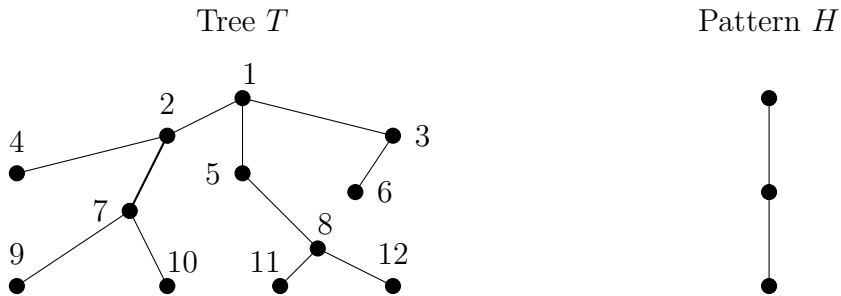

\begin{thm}{(\cite[Theorem 1]{CDKK2008})} \label{T:Drmota}
   Let $H$ to be a given finite tree. Then the limiting distribution of the number of occurrences of $H$ (in the sense described above) 
 in $\Tran$ is asymptotically normal with mean and variance asymptotically equivalent to $\mu n$ and $\sigma^2 n$,
   where $\mu>0$ and $\sigma^2\geq 0$ depend on the pattern $H$ and can be computed explicitly 
   and  algorithmically and can be represented as polynomials (with rational coefficients) in $1/e$.
\end{thm}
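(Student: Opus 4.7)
The plan is to reduce \cref{T:Drmota} to a direct application of \cref{T:main}. Let $F(T)$ denote the number of occurrences of $H$ in $T$ in the sense defined above and write $h := |V(H)|$. A single perturbation $\swap_{i}^{jk}$ in a tree with very high vertex degrees can alter many occurrences through the affected vertices $j$ and $k$, so we first replace $F$ by a truncated parameter $\tilde F(T)$ that counts only those occurrences all of whose vertices have $T$-degree at most $\log n$. By \eqref{eq:Moon}, $\Tran$ has a vertex of degree exceeding $\log n$ with probability at most $n/(\log n)! = e^{-\omega(\log n)}$, so $\tilde F(\Tran) = F(\Tran)$ with super-polynomial probability; asymptotic normality thus transfers from $\tilde F$ to $F$.

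Next we verify the hypotheses of \cref{T:main} for $\tilde F$. Since $\swap_{i}^{jk}$ changes only the edges incident to $i$ and the degrees of $j$ and $k$, the occurrences contributing to $\tilde F(T) - \tilde F(\swap_{i}^{jk} T)$ are those $h$-vertex subtrees passing through $j$, through $k$, or through one of the edges $ij$, $ik$, all within the low-degree part of the tree. The number of $h$-vertex subtrees through a fixed vertex of degree at most $\log n$ is bounded by $(\log n)^{h-1}$, so $\tilde F$ is $\alpha$-Lipschitz with $\alpha = (\log n)^{O(1)}$. For the superposable property, if $d_T(\{j,k\},\{b,c\}) \geq h$ then no subtree of diameter at most $h-1$ can simultaneously meet both $\{j,k\}$ and $\{b,c\}$; the two perturbations therefore alter disjoint collections of occurrences, and $\tilde F$ is $\rho$-superposable with $\rho = h = O(1)$.

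The technical crux is to establish $\Var[\tilde F(\Tran)] = (\sigma^2 + o(1))n$ with $\sigma^2 > 0$ in the non-degenerate cases. This is the main obstacle and is deferred to Appendix A: one expresses $\E[\tilde F]$ and $\E[\tilde F^2]$ as sums over enumerations of labelled trees containing one or two prescribed copies of $H$, organised by how the two copies intersect, and checks that the leading $\Theta(n^2)$ contribution from vertex-disjoint pairs cancels precisely against $(\E[\tilde F])^2$, leaving a linear remainder whose coefficient matches the $\sigma^2$ predicted by \cref{T:Drmota}. The intricate cancellation of the dominant terms, via Cayley-type formulas for the number of labelled trees containing a given forest, is what makes this step delicate.

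With $\alpha = (\log n)^{O(1)}$, $\rho = O(1)$, and $\Var[\tilde F(\Tran)] = \Theta(n)$, the hypothesis of \cref{T:main} is satisfied for any $\varepsilon \in (0, 1/4)$, giving asymptotic normality of $\tilde F(\Tran)$ with the rate $\delta_{\rm K}[\tilde F(\Tran)] = O(n^{-1/4+\epsilon})$. Transferring back to $F(\Tran)$ via the super-polynomially small discrepancy completes the proof of \cref{T:Drmota}; the explicit mean $\mu n > 0$ follows from the same first-moment computation used for the variance.
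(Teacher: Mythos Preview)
Your proposal is essentially correct and follows the same route the paper takes. Note first that \cref{T:Drmota} is stated in the paper as a cited result from \cite{CDKK2008}, not proved there from scratch; the paper's own contribution is the more general \cref{T:CLT_patterns}, whose proof is exactly the scheme you outline: truncate to occurrences lying in the low-degree part of the tree using \eqref{eq:Moon}, check the Lipschitz and superposable properties of the truncated count, and invoke \cref{T:main}. The paper takes $\rho = 3\ell$ rather than your $\rho = h$, but since $h$ is fixed this is immaterial.

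The one point where your sketch is thinner than the paper's argument is the variance lower bound. Your Appendix-A plan (organise $\E[\tilde F^2]$ by intersection type, cancel the disjoint-pair contribution against $(\E[\tilde F])^2$, extract a linear remainder) is exactly what the paper does via \cref{L:forest} and \cref{L:Var}, but you omit the key reason the remaining linear coefficient is strictly positive. The paper's argument is that this coefficient is a non-trivial polynomial with integer coefficients evaluated at $1/e$; transcendence of $1/e$ forces it to be nonzero, and non-negativity of the variance forces it to be positive. This same computation also delivers the ``polynomial in $1/e$'' assertion for $\mu$ and $\sigma^2$ in the statement of \cref{T:Drmota}, which your proposal does not otherwise address.
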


Janson~\cite{Janson2016} considers the subtree counts $\eta_H(T)$  defined differently.  Set  vertex $1$ to be a root of $T\in \Tset_n$. For any other vertex $v$, let $T_v$  be the subtree consisting of $v$ an all its descendants. Such subtrees are called \emph{fringe subtrees}. The parameter $\eta_H(T)$ equals the number of fringe subtrees isomorphic to $H$ (with a root).  For example,
the tree $T$ on  Figure \ref{F:patterns} 
contains only one path with three vertices (rooted at end vertex), namely $T[\{3,6,13\}]$.  
 In particular, the induced paths  $T[\{1,5,8\}]$ and $T[\{1,3,6\}]$ are not counted since they are not  fringe subtrees. Janson~\cite{Janson2016} proved  the following result about joint asymptotic normality for several such subtree counts.
\begin{thm}{(\cite[Corollary 1.8]{Janson2016})} \label{Janson2016}
Let $\Tran^{\rm GW}_n$ be a conditioned Galton--Watson tree of order $n$ with offspring distribution $\xi$,
 where $\E[\xi] = 1$ and $0 < \sigma^2 := \Var [\xi] <\infty$. Then, the subtree counts $\eta_H(\Tran^{\rm GW}_n)$ 
 (for all $H$ from a given set of patterns)  are asymptotically jointly normal.
\end{thm}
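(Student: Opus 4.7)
The plan is to establish joint asymptotic normality via the Cram\'er--Wold device and then exploit the additive structure of fringe-subtree counts together with the standard \L ukasiewicz encoding of conditioned Galton--Watson trees. For any finite family of patterns and any real coefficients $c_H$, it suffices to prove asymptotic normality of the single statistic
\[
X_n \;=\; \sum_{H} c_H\, \eta_H(\Tran^{\rm GW}_n) \;=\; \sum_{v} f(\calT_v),
\]
where $f(T') = \sum_H c_H \Ind{T' \cong H}$ is a bounded, compactly supported functional on finite rooted trees and $\calT_v$ denotes the fringe subtree rooted at the vertex $v$.

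Next, I would encode $\Tran^{\rm GW}_n$ via its depth-first (\L ukasiewicz) walk $S_0, S_1, \ldots, S_n$: this is a random walk with i.i.d.\ increments distributed as $\xi - 1$, conditioned to first hit $-1$ at time exactly $n$. The key observation is that each vertex $v$ corresponds to a step $k$, and the fringe subtree $\calT_v$ is completely determined by the excursion of $S$ above $S_k$ between time $k$ and the next time $S$ returns to $S_k - 1$. Since $f$ is supported on trees of bounded size, each summand $f(\calT_v)$ depends only on a short initial segment of that excursion, which exhibits $X_n$ as a genuinely local additive functional of the walk.

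From this representation the moments follow by standard techniques. Aldous's fringe-subtree convergence yields $\E[X_n] = n\, \E[f(\calT)] + O(1)$ where $\calT$ is the unconditional Galton--Watson tree, and a pairwise covariance calculation using exponential decay of correlations in the depth-first distance between $u$ and $v$ gives $\Var[X_n] \sim s^2 n$ for an explicit $s^2 \geq 0$. For the univariate CLT I would then construct a Doob martingale along the filtration of the walk, with differences $D_k = \E[X_n \mid S_0, \ldots, S_k] - \E[X_n \mid S_0, \ldots, S_{k-1}]$, and apply Brown's martingale central limit theorem --- the same tool used for \cref{T:main}.

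The main obstacle is verifying the quadratic-variation condition $\sum_k D_k^2 / \Var[X_n] \to 1$ in probability together with a Lindeberg-type bound; the difficulty is that conditioning the walk to first hit $-1$ at exactly time $n$ globally couples all excursions. The standard remedy is to argue that on $[0, n - \omega(\sqrt{n})]$ the conditioned walk is close in total variation to the unconditioned walk, while the contribution from times in the remaining window of length $o(n)$ is negligible by a second-moment estimate. A secondary subtlety is identifying the full covariance matrix for the joint statement: one repeats the bivariate covariance computation for each pair $(H, H')$, checks positive semidefiniteness of the limit, and then Cram\'er--Wold delivers the joint asymptotic normality.
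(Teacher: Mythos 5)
This statement is not proved in the paper at all: it is quoted as background and attributed to Janson (the surrounding text reads ``Janson~\cite{Janson2016} proved the following result,'' and the text just after the theorem refers to ``Janson~\cite[Corollary~1.4]{Janson2016}''; the citation key \texttt{CDKK2008} inside the theorem environment is evidently a typo for \texttt{Janson2016}). There is therefore no in-paper proof to compare your proposal against, and nothing in the present paper---including its martingale machinery in Section~\ref{S:Construction}, which is built on the Aldous--Broder algorithm for uniform labelled trees and does not transfer to general conditioned Galton--Watson trees---is used to establish this result.

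Read instead as a freestanding sketch of how one might reprove Janson's Corollary 1.4, your outline identifies sensible ingredients (Cram\'er--Wold, locality of the summand in the \L ukasiewicz walk, separating the bulk of the walk from the endpoint conditioning), but it contains real gaps. The claim that on $[0,\,n-\omega(\sqrt n)]$ the conditioned walk is close in \emph{total variation} to the unconditioned walk is false; what holds is an absolute-continuity comparison whose Radon--Nikodym density is controlled by a local limit theorem (this needs an aperiodicity reduction and blows up near the right endpoint), so the passage from the conditioned to the unconditioned process has to be done with more care than you indicate. You also assert that the quadratic variation of the Doob martingale concentrates ``by decay of correlations,'' but concentration of the conditional variance under the bridge conditioning is itself a nontrivial estimate and is not supplied. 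Finally, you write $\Var[X_n]\sim s^2 n$ with $s^2\ge 0$ and then invoke a CLT; when $s^2=0$ the statement degenerates and needs to be phrased accordingly, as Janson does. So the proposal is a reasonable roadmap but not a proof, and in any case it is not what the paper does, since the paper does not prove this theorem.
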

 Janson~\cite[Corollary 1.8]{Janson2016} also gives expressions for the covariances of the limiting distribution  in terms of  
 the distribution of the corresponding unconditioned Galton--Watson tree. To relate this model to uniform random labelled tree $\T$, one need to take  the conditioned Galton--Watson tree of order $n$ with  the Poisson offspring distribution.

We consider a more general type of tree counts which encapsulates both counts from above. 
In fact, it was suggested by Chysak, Drmota, Klausner, Kok \cite{CDKK2008}:
 ``\emph{...we could also consider pattern-matching problems for patterns in which some degrees of certain
possibly external ``filled'' nodes must match exactly while the degrees of the other, possibly internal ``empty'' nodes
might be different. But then the situation is more involved.}'' Then, in  \cite[Section 5.3]{CDKK2008}  they explain that
having an internal    ``empty'' node leads to serious complications in their approach.

We define our tree parameter formally.  Let  $H$ be a tree with $\ell$ vertices $v_1,\ldots,v_{\ell}$.      
Let $\thetavec = (\theta_1,\ldots,\theta_\ell) \in \{0,1\}^{\ell}$. 
We say the pattern $(H,\thetavec)$  \emph{occurs}  
in a tree $T \in \Tset_n$  if 
there exists a pair of sets $(U,W)$ such that $W\subset U \subset [n]$ and
\begin{itemize}
\item the induced 
subgraph $T[U]$ is isomorphic to $H$, 
\item the set $W$ 
corresponds to all vertices $v_i$ with $\theta_i=1$ (``empty'' nodes), 

\item  there is no edge in $T$ between  $U-W$  and $[n] - U$.
 \end{itemize}
 Denote by  $N_{H,\thetavec}(T)$ the number of occurrences of the pattern $(H,\thetavec)$ in $T$ that is the number of different pairs  $(U, W)$ satisfying the above. It equals the number 
 of ways  to choose suitable identities for $v_1,\ldots, v_\ell$ in $[n]$  divided 
 by  $|\Aut{H,\thetavec}|$, the number of automorphisms of $H$ that preserve  $\thetavec$. In particular, 
 if $\theta_i = 1$ for all $i\in [\ell]$ then $N_{H,\thetavec}(T)$ is the number of induced subgraphs in $T$ isomorphic $H$.
 If $\theta_i = 1$ whenever $i$ is a leaf of $H$, then $N_{H,\thetavec}(T)$ is the tree count  considered in \cref{T:Drmota}. If $\theta_i=1$ for exactly one vertex $i\in [\ell]$ which is a leaf in $H$, then $N_{H,\thetavec}(T)$  counts fringe subtrees.

In \cref{S:distr_patt}, we prove that  $ N_{H,\thetavec}(\Tran)$ is asymptotically normal for any fixed $H$ and $\thetavec\in \{0,1\}^{\ell}$ with at least one non-zero component (where $\ell$ is the number of vertices in $H$). Note  that  if  $\theta_i=0$ for all $i \in [\ell]$ and $n>\ell$ then $N_{H,\thetavec}(\Tran) = 0$
since at least one vertex corresponding to $H$ must be adjacent to other vertices in $T$.
 Our approach also works for growing patterns. We demonstrate it for the case when  $H$ is a path.

\subsection{Moments calculation}

To apply \cref{T:main}, we need a lower bound for $\Var(N_{H,\thetavec}(\Tran))$.  
One can  compute the moments of  $N_{H,\thetavec}(\Tran)$  using the following formula
for the number of trees  containing a given spanning forest. 
Lemma \ref{L:forest}  is a straightforward generalisation of \cite[Theorem 6.1]{Moon1970} with   almost identical proof, which we include here for the sake of completeness.

\begin{lemma}\label{L:forest}
	Let $S = H_1 \sqcup\ldots\sqcup H_k$ be a forest on $[n]$ and 
	$B_i$ be  non-empty subsets (not necessarily proper) of $V(H_i)$ for all 
	$i \in [k]$. Then, the number of trees  $T$
	on $[n]$ containing all edges of $H$ such that $\deg_T(v) = \deg_S(v)$
	for every $v$ outside $B_1 \cup \ldots \cup B_{k}$
	equals $b_1\cdots b_k (b_1+\cdots + b_k)^{k-2}$,
	where $b_i$ is the number of vertices in $B_i$.
\end{lemma}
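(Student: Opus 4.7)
The natural approach is a contraction argument that reduces the problem to counting weighted trees on $k$ labelled vertices.

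First, I would establish two structural facts. Since $T$ is a tree with $n-1$ edges and $S$ is a spanning forest with $n-k$ edges, the set $T \setminus S$ consists of exactly $k-1$ edges. The degree condition forces every endpoint of an edge in $T \setminus S$ to lie in $B_1 \cup \cdots \cup B_k$, and since adding an edge within a single component $H_i$ would create a cycle in $T$, each such edge must run between $B_i$ and $B_j$ for some distinct $i,j$.

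Next, I would contract each $H_i$ to a single vertex $c_i$ and let $T'$ be the resulting graph on $\{c_1,\ldots,c_k\}$. By the previous step $T'$ has exactly $k-1$ edges and is connected (inherited from $T$), and it has no loops; it also has no parallel edges, since two parallel edges between $c_i$ and $c_j$ would combine with paths inside $H_i$ and $H_j$ to yield a cycle in $T$. Hence $T'$ is a tree on $[k]$. Conversely, a tree $T'$ on $[k]$ together with a choice of endpoint in $B_i$ and in $B_j$ for each edge $\{c_i,c_j\}$ of $T'$ uniquely determines a valid $T \supseteq S$ satisfying the degree constraint. The quantity we wish to compute therefore equals
\[
 \sum_{T' \text{ tree on } [k]} \prod_{i=1}^{k} b_i^{\deg_{T'}(i)}.
\]

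Finally, I would evaluate this sum via the Prüfer bijection between trees on $[k]$ and sequences in $[k]^{k-2}$, under which $\deg_{T'}(i) - 1$ is precisely the number of occurrences of $i$ in the code (for $k \geq 2$; the cases $k = 1, 2$ can be checked by hand). Pulling out a factor of $b_i$ for each vertex converts the sum into
\[
 b_1 \cdots b_k \sum_{(p_1,\ldots,p_{k-2}) \in [k]^{k-2}} b_{p_1} \cdots b_{p_{k-2}} = b_1 \cdots b_k \, (b_1+\cdots+b_k)^{k-2},
\]
where the last equality is the multinomial expansion. I do not foresee any serious obstacle; the only step requiring some care is checking that the contracted graph is a simple tree, which rests on the acyclicity of $T$ and the assumption that each $B_i$ is nonempty.
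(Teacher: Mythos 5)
Your proof is correct and takes essentially the same route as the paper: both contract the forest components to obtain a weighted count of trees on $k$ vertices, with weight $\prod_i b_i^{\deg(i)}$, and then evaluate that sum to $b_1\cdots b_k(b_1+\cdots+b_k)^{k-2}$. The only cosmetic difference is that you carry out the evaluation directly via the Pr\"ufer bijection, while the paper cites Moon's count of trees with prescribed degree sequence, $\binom{k-2}{d_1-1,\ldots,d_k-1}$, and then sums over degree sequences; those two steps are the same computation, since the Pr\"ufer argument is exactly how Moon's formula is proved. Your write-up is a bit more explicit about why the contraction yields a simple tree, which the paper leaves implicit, but there is no substantive difference in approach.
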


\begin{proof}
     Any desired tree $T$ corresponds to a tree  $T_H$  on $k$ vertices labelled by $H_1,\ldots, H_k$ for which  the  vertices $H_i$ and $H_j$ are adjacent if and only if there is an edge between $H_i$ and
      $H_j$ in $T$. 
      If $d_1,\ldots, d_k$ are degrees of $T_H$ then the number of trees $T$ corresponding to $T_H$
      equals $b_1^{d_1} \ldots b_{k}^{d_{k}}$ since we can only use vertices from $B_1 \cup \ldots \cup B_{k}$ for edges of $T$.  From 
\cite[Theorem 3.1]{Moon1970}, we know that the number of trees
on $k$ vertices with degrees $d_1,\ldots, d_k$ is 
$
\binom{k-2}{d_1-1, \ldots ,d_k-1}.
$
Thus, the total number of such trees $T$  is 
\[
	\sum_{(d_1,\ldots, d_k)} 
	b_1^{d_1} \ldots b_{k}^{d_{k}}
	\binom{k-2}{d_1-1, \ldots ,d_k-1} = b_1\ldots b_k (b_1+\cdots +b_k)^{k-2},
\]
where the sum is over all positive integers sequences  that $d_1+\cdots+ d_k = 2k-2$.
\end{proof}

For an $\ell$-tuple $\uvec= (u_1,\ldots,u_\ell) \in [n]^{\ell}$  with distinct coordinates, let 
$\one_{\uvec}(\Tran)$  be the indicator of the event that  a pattern $(H,\thetavec)$ occurs  in $\Tran$  with  $u_1, \ldots, u_\ell$ corresponding  to the vertices of $H$.
Let $s: = \sum_{i=1}^{\ell} \theta_i$.
 Applying Lemma \ref{L:forest} to a forest consisting of one nontrivial component isomorphic to $H$ and  dividing by $|\Tset_n| = n^{n-2}$, we find that 
\begin{equation}\label{e:patterns}
	\E \left[ \one_{\uvec}(\Tran)\right] = 
	\frac{s  \left(n -  \ell  +  s  \right)^{n-\ell -1}}
	{n^{n-2}}
	=  \frac{s e^{-\ell + s+  O(\ell^2/n) }}{ n^{\ell-1}}.
\end{equation}
Summing over all choices for $\uvec$ and dividing by  $|\Aut{H,\thetavec}|$ to adjust overcounting, we get 
\begin{align*}
	\E  \left[N_{H,\thetavec}(\Tran) \right]&=
	 \frac{1}{|\Aut{H,\thetavec}|}  \sum_{\uvec} \E \left[ \one_{\uvec}(\Tran)\right]
	=n \, \frac{s   e^{-\ell +s +  O(\ell^2/n) } }{\Aut{H,\thetavec}}  
	.
\end{align*}
In particular, this formula agrees with \cref{T:Drmota} that $\mu$ is a polynomial with rational coefficients in $1/e$. Similarly, for the variance, we have 
\begin{equation}\label{v:patterns}
	\Var   \left[N_{H,\thetavec}(\Tran)\right]
	        =  \frac{1}{|\Aut{H,\thetavec}|^2} 
	        \sum_{\uvec,\uvec'} \Cov (\one_{\uvec}(\Tran), \one_{\uvec'}(\Tran) ),
\end{equation}
where the sum over all $\ell$-tuples $\uvec,\uvec' \in [n]^{\ell}$  with distinct coordinates.
Then, we can also use  Lemma \ref{L:forest} (with one or two nontrivial components) to compute
$ \Cov ( \one_{\uvec}(\Tran), \one_{\uvec'}(\Tran))$. However, this computation is much more involved: one needs to consider all possible ways the pattern $(H,\thetavec)$  intersects with itself. Nevertheless, for a fixed pattern,  it is not difficult to see that 
$\E \left[\one_{\uvec}(\Tran)\right]$ and 
$\E \left[\one_{\uvec}(\Tran) \one_{\uvec'}(\Tran)\right]$  are polynomials with integer coefficients in 
$1/e$ divided by some power of $n$. This observation is already sufficient to establish 
the bound $\Vari{N_{H,\thetavec}(\Tran)}= \Omega(n)$ for the case 
when $\sum_{i=1}^{\ell} \theta_i <\ell$. 
 \begin{lemma}\label{L:Var}
 	 Let $(H,\thetavec)$ be a fixed pattern, $\ell$ be the number of vertices in tree $H$, and $s:=\sum_{i=1}^{\ell} \theta_il$. Then, there exist a polynomial 
 	 $p_{H,\thetavec}$ of degree at most $2\ell-2s$  with integer coefficients  that
 	 \[
 	 \Vari{N_{H,\thetavec}(\Tran)} = n \, \frac{p_{H,\thetavec} (1/e)}{|\Aut{H,\thetavec}|^2} + O(1).
 	 \]
 	 Moreover, if  $s <\ell$ then
 	 $p_{H,\thetavec} (1/e) > 0$.
 \end{lemma}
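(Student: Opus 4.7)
The plan is to expand the covariance sum \eqref{v:patterns} via \cref{L:forest}, classify pairs of embeddings of $H$ by their overlap structure, and collect the contributions to $\Vari{N_{H,\thetavec}(\Tran)}$ order by order in $n$.

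First, I would partition the pairs $(\uvec,\uvec')$ of $\ell$-tuples with distinct coordinates by their \emph{overlap type} $\pi$: the combinatorial data recording which coordinates of $\uvec$ and $\uvec'$ coincide and how the two embedded copies of $H$ align as labelled trees. For each $\pi$, let $m(\pi)$ be the number of distinct vertices in $\uvec\cup\uvec'$, $c(\pi)$ the number of connected components of the induced union $H\cup H'$, and $s_{\pi,i}$ the number of ``empty'' vertices of the $i$-th component, with $s_\pi:=\sum_i s_{\pi,i}$. When $H\cup H'$ contains a cycle, the covariance vanishes; otherwise \cref{L:forest}, applied to $H\cup H'$ together with $n-m(\pi)$ singleton components, gives
\begin{equation*}
\E\bigl[\one_{\uvec}(\Tran)\,\one_{\uvec'}(\Tran)\bigr]
=\frac{\prod_{i} s_{\pi,i}\cdot\bigl(n-m(\pi)+s_\pi\bigr)^{n-m(\pi)+c(\pi)-2}}{n^{n-2}}.
\end{equation*}
The number of ordered pairs realising $\pi$ is an integer polynomial in $n$ of degree $m(\pi)$.

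Next, expanding $(1-a/n)^{n-b}=e^{-a}(1+O(1/n))$ as a power series in $1/n$ and applying it both to the display above and to \eqref{e:patterns} (squared) yields $\Cov(\one_{\uvec}(\Tran),\one_{\uvec'}(\Tran))$ as an explicit series for each overlap type $\pi$. After multiplying by the pair count, summing over the finitely many overlap types, and extracting the coefficient of $n^1$, the result takes the claimed form $n\cdot p_{H,\thetavec}(1/e)/|\Aut{H,\thetavec}|^2 + O(1)$. The degree of $p_{H,\thetavec}$ in $1/e$ is bounded by $\max_\pi(m(\pi)-s_\pi)=2\ell-2s$, attained by the disjoint overlap type (where $m=2\ell$, $c=2$, $s_\pi=2s$). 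Integrality of the coefficients follows from the integrality of the counts in \cref{L:forest} together with careful tracking of the cancellations among overlap types, analogous to the rationality argument underlying \cref{T:Drmota}.

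The main obstacle is the positivity $p_{H,\thetavec}(1/e)>0$ when $s<\ell$. The weak bound $p_{H,\thetavec}(1/e)\geq 0$ is automatic from $\Var\geq 0$; the task is to exclude equality, i.e.\ to rule out $\Vari{N_{H,\thetavec}(\Tran)}=O(1)$. The hypothesis $s<\ell$ is essential: for $s=\ell$ and $H$ a single edge, $N_{H,\thetavec}(\Tran)=n-1$ deterministically, so $p\equiv 0$. When $s<\ell$, the pattern contains at least one ``filled'' vertex whose fixed degree in $H$ imposes a genuine constraint, coupling $N_{H,\thetavec}(\Tran)$ to the random degree sequence of $\Tran$; since the number of vertices of any given degree in $\Tran$ has variance $\Theta(n)$ (see \cref{c:leaves}), this coupling should yield a linear-in-$n$ lower bound on $\Vari{N_{H,\thetavec}(\Tran)}$. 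The plan is to make this precise by conditioning on the degree sequence, showing that $\E[N_{H,\thetavec}(\Tran)\mid\text{degrees}]$ already has variance $\Theta(n)$, and matching the leading coefficient to $p_{H,\thetavec}(1/e)$ via the explicit computation of the previous paragraph. The full coefficient-tracking is routine but lengthy and belongs in Appendix~A.
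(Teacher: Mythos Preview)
Your first part --- partitioning pairs $(\uvec,\uvec')$ by overlap type, applying \cref{L:forest}, expanding, and extracting the coefficient of $n$ --- is essentially what the paper does, though the paper organises it slightly differently: rather than tracking the full overlap type $\pi$, it records only the size $a$ of the union and the number $b$ of common ``empty'' vertices, groups pairs into permutation-equivalence classes of size $(n)_a$, and reads off that each class contributes $n\,be^{-a+b}+O(1)$ (or $n(be^{-a+b}-s^2e^{-2\ell+2s})+O(1)$ when $a=2\ell-1$). This confirms the polynomial form with degree at most $2\ell-2s$.

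For positivity you are working much harder than necessary. The paper's argument is purely algebraic and takes three lines: among all overlap types, the minimum value of $a-b$ is $\ell-s$, achieved exactly when $\uvec$ and $\uvec'$ use the same vertex set with the same ``empty'' subset; for these pairs $a=\ell<2\ell-1$, so the covariance is strictly positive, hence the coefficient of $x^{\ell-s}$ in $p_{H,\thetavec}(x)$ is a strictly positive integer and the polynomial is not identically zero. Since $1/e$ is transcendental, $p_{H,\thetavec}(1/e)\neq 0$; combined with $p_{H,\thetavec}(1/e)\geq 0$ from $\Var\geq 0$, positivity follows. The hypothesis $s<\ell$ is used precisely to ensure $\ell-s\geq 1$, so that $p_{H,\thetavec}$ has positive degree and the transcendence argument bites.

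Your proposed route via conditioning on the degree sequence and bounding $\Var\bigl[\E[N_{H,\thetavec}(\Tran)\mid\text{degrees}]\bigr]$ from below is exactly what the paper does in \cref{l:Var_pattern} --- but only for the genuinely harder case $s=\ell$, where $p_{H,\thetavec}$ degenerates to a constant and transcendence gives nothing. There is no need to invoke that machinery when $s<\ell$.
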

 \begin{proof}
 	Consider any $\ell$-tuples $\uvec,\uvec' \in [n]^{\ell}$  with distinct coordinates.   
 	If the coordinates of $\uvec$ and $\uvec'$ form disjoint sets, then 
 	 applying Lemma \ref{L:forest} to a forest consisting of two nontrivial component isomorphic to $H$, we find that 
\begin{equation*}
	\E \left[\one_{\uvec}(\Tran) \one_{\uvec'}(\Tran) \right] = 
	\frac{s^2  \left(n -  2\ell  +  2s  \right)^{n-2\ell}}
	{n^{n-2}}.
\end{equation*}
Using \eqref{e:patterns}, we get that
 \begin{align*}
 	 \Cov ( \one_{\uvec}(\Tran),  \one_{\uvec'}(\Tran)) &= 
 	  \frac{s^2}{n^{2\ell-2}}  \left( e^{-2 \ell + 2s -  \frac{(2\ell-2s)^2}{2n} +O(n^{-2}) }
 	 - e^{2 (- \ell + s -  \frac{(\ell-s)^2}{2n} ) +O(n^{-2})}
 	 \right)\\
    &= - \frac{s^2  (\ell-s)^2  e^{-2 \ell + 2s} }{n^{2\ell-1}} + O(n^{-2\ell}). 
 \end{align*}
 Then, the contribution  of such $\uvec,\uvec'$  to the sum $\sum_{\uvec,\uvec'} \Cov (\one_{\uvec}(\Tran), \one_{\uvec'}(\Tran) )$
 in  \eqref{v:patterns} 
 equals \[-n  s^2  (\ell-s)^2  e^{-2 \ell + 2s} + O(1).\]  
 
 Next, we proceed to the case when the sets formed by the coordinates of $\uvec$ and $\uvec'$ intersect. 
Let $a$ be the size of the union of these two sets and
 \[
 	 b:=|\{u_i \st \theta_i=1\}\cap \{u_i' \st \theta_i=1\}|.
 \]
 Note that  $\ell - s \leq a-b \leq  2\ell -2 s$.
 Then, using Lemma \ref{L:forest} (and also \eqref{e:patterns}), we find that 
 \begin{align*}
 	\Cov(\one_{\uvec}(\Tran),\one_{\uvec'}(\Tran)) 
 	 &= \frac{b(n - a +b)^{n-a-1}}{n^{n-2}} - \left(\frac{s (n-\ell +s)^{n-\ell-1}}{n^{n-2}}\right)^2
        \\
        & =  \frac{1 + O(n^{-1})}{n^{a-1}}  \cdot
        \begin{cases}
        be^{-a+b} , &\text{if } a\leq 2\ell-2,
         \\
         b e^{-a+b} - s^2 e^{-2\ell + 2s}, &\text{if } a=2\ell -1.
        \end{cases}
 \end{align*}
 We say a pair $(\uvec,\uvec')$ is equivalent to $(\wvec,\wvec')$ if there is a permutation $\sigma$ 
 of the set $[n]$ that $w_i = \sigma(u_i)$ and $w_i' =  \sigma(u_i')$ for all $i \in [\ell]$. Note that the number of pairs equivalent to  $(\uvec,\uvec')$ is exactly $(n)_{a}$. Then, the contribution of the equivalence  class   to the sum $\sum_{\uvec,\uvec'} \Cov (\one_{\uvec}(\Tran), \one_{\uvec'}(\Tran) )$
 in  \eqref{v:patterns}  is   $n be^{-a+b} +O(1)$ or $nbe^{-a+b} - s^2 e^{-2\ell +2s}+ O(1)$.  Summing over all equivalence classes, we  complete the proof of the first part.
 
 For the second part, observe in the above that $a-b = \ell -s$ if and only if 
 the sets of coordinates of $\uvec$ and $\uvec'$ coincide and 
 $\{u_i \st \theta_i=1\}=\{u_i' \st \theta_i=1\}$.  In particular,  we have $a<2\ell -1$ so 
 $	\Cov(\one_{\uvec}(\Tran),\one_{\uvec'}(\Tran)) >0$. Then, the coefficient corresponding to $x^{-\ell + s}$ in $p_{H,\theta}(x)$ is strictly positive so the polynomial $p_{H,\theta}(x)$ is not trivial. Since the number $1/e$ is transcendental, we conclude that $p_{H,\theta}(1/e)$ is not zero. Also, $p_{H,\theta}(1/e)$ can not be negative since $\Vari{N_{H,\thetavec}(\Tran)} \geq 0$ so it can only be positive. This completes the proof.
 \end{proof}

 For a tree $T\in \Tset_n$, 
  let  $N_{H}(T) := N_{H, \thetavec}(T)$ if $\theta_i=1$ for all $i \in [\ell]$ that is 
  $N_{H}(T)$ is  the number of induced subgraphs of $T$ isomorphic to $H$.
 Unfortunately,  the lemma above can not guarantee that $\Vari{N_{H}(\Tran)} = \Omega(n)$. In this case,  the polynomial $p_{H,\theta}$ is  a non-negative constant, but an additional argument is required to show that it is not zero.  
  \begin{lemma}\label{l:Var_pattern}
  	For any fixed tree $H$ with degrees $h_1,\ldots,h_\ell$, we have
  	\[
  		\Vari {N_{H}(\Tran)}  \geq  \frac{n}{|\Aut{H}|^2}\sum_{j \geq 2} c_j^2 \, j!  + O(1),
  	\] 
  	where $c_j = \sum_{i=1}^\ell \left( \binom{h_i}{j} + (\ell-1) \binom{h_i-1}{j} \right)$.
  	In particular, $c_2>0$ if $\ell \geq 3$.
  \end{lemma}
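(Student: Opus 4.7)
The plan is to use the law of total variance, $\Vari{N_H(\Tran)} \geq \Vari{\E[N_H(\Tran) \mid \mathbf{D}]}$, where $\mathbf{D} = (\deg_{\Tran}(v))_{v \in [n]}$ is the degree sequence of $\Tran$. Since $H$ is a tree, any copy of $H$ as a subgraph of a tree $T$ is automatically induced (a second edge would close a cycle), so the conditional expectation can be evaluated directly. Applying \cref{L:forest} with the single component $H$ placed on vertices $\uvec = (u_1, \ldots, u_\ell)$, and dividing by the total number $\binom{n-2}{d_1-1,\ldots,d_n-1}$ of trees with degree sequence $\mathbf D$, I obtain
\[
\E[\one_\uvec(\Tran) \mid \mathbf D] = \frac{D^*_\uvec \prod_{i=1}^{\ell} (d_{u_i} - 1)^{\underline{h_i-1}}}{(n-2)(n-3)\cdots(n-\ell)}, \qquad D^*_\uvec := \sum_i d_{u_i} - 2(\ell-1),
\]
where $x^{\underline r} := x(x-1)\cdots(x-r+1)$. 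Summing over distinct $\uvec$ and replacing the distinctness-constrained sum by an unconstrained product (with an $O(n^{\ell-1})$ correction) yields
\[
|\Aut{H}|(n-2)\cdots(n-\ell)\, \E[N_H \mid \mathbf D] = \sum_i \widetilde X_i \prod_{j \neq i} X_j - 2(\ell-1)\prod_j X_j + O(n^{\ell-1}),
\]
with $X_j := \sum_v (d_v-1)^{\underline{h_j-1}}$ and $\widetilde X_i := \sum_v d_v^{\underline{h_i}}$.

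The key step is to expand both $X_j$ and $\widetilde X_i$ in the Charlier polynomial basis $\{C_k\}_{k\geq 0}$ of polynomials orthogonal with respect to the Poisson$(1)$ measure, which satisfies $\E[C_k(\xi)C_{k'}(\xi)] = k!\,\delta_{kk'}$. Using $W_v := d_v - 1$, $S_k := \sum_v C_k(W_v)$, and the identities $W^{\underline r} = \sum_k \binom{r}{k} C_k(W)$ together with $(W+1)W^{\underline{h-1}} = W^{\underline h} + hW^{\underline{h-1}}$, one obtains $\Delta_j := X_j - \E X_j = \sum_{k\geq 1} \binom{h_j-1}{k} S_k$ and $\widetilde\Delta_i := \widetilde X_i - \E\widetilde X_i = \sum_{k\geq 1}\bigl[\binom{h_i}{k} + h_i\binom{h_i-1}{k}\bigr] S_k$. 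Collecting the linear-in-fluctuation part, the coefficient of $S_k$ in $\E[N_H\mid \mathbf D]$ (after multiplication by $|\Aut H|$) is
\[
\sum_i \Bigl[\binom{h_i}{k} + h_i \binom{h_i-1}{k} + ((\ell-1) - h_i)\binom{h_i-1}{k}\Bigr] = \sum_i \Bigl[\binom{h_i}{k} + (\ell-1)\binom{h_i-1}{k}\Bigr] = c_k,
\]
matching the formula in the lemma statement. Thus
\[
\E[N_H \mid \mathbf D] = \frac{n\ell}{|\Aut{H}|} + \frac{1}{|\Aut{H}|}\sum_{k \geq 1} c_k S_k + (\text{lower-order terms}).
\]

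Finally, I compute $\Vari{\sum_{k\geq 1} c_k S_k}$. Since $\sum_v W_v = 2(n-1) - n = n - 2$ is deterministic, $S_1 = -2$ is constant and contributes nothing. For $k \geq 2$, the Charlier polynomial $C_k$ is orthogonal to $C_1 = W - 1$ under Poisson$(1)$, so fixing the value of $\sum_v W_v$ does not perturb the leading-order variance: the $(W_v)_v$ behave asymptotically as i.i.d.\ Poisson$(1)$ variables subject to a sum constraint transparent to $C_k$, yielding $\Vari{S_k} = n \cdot k! + O(1)$ and $\Cov(S_k, S_{k'}) = O(1)$ for $2 \leq k \neq k'$. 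Summing gives $\Vari{\E[N_H \mid \mathbf D]} = \frac{n}{|\Aut{H}|^2}\sum_{k \geq 2} c_k^2\, k! + O(1)$, which by the law of total variance proves the lemma. The particular claim is immediate: any tree on $\ell \geq 3$ vertices has a non-leaf vertex with $h_i \geq 2$, giving $c_2 \geq \binom{2}{2} = 1$. The main obstacle is verifying the $O(1)$ estimates for $\Vari{S_k}$ and $\Cov(S_k, S_{k'})$, together with controlling the cross-covariance of the linear part $\sum_k c_k S_k$ with the quadratic and higher-order terms in the expansion of $\E[N_H\mid \mathbf D]$; this can be handled by a direct second-moment computation via \cref{L:forest} applied to forests with one or two components, or by invoking known precise asymptotics for degree-count statistics of $\Tran$, leveraging the orthogonality of Charlier polynomials to cancel the cross-contributions.
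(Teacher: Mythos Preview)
Your overall strategy matches the paper's proof exactly: both use the law of total variance to reduce to bounding $\Vari{E_H(\X)}$ with $E_H(\xvec)=\E[N_H(\Tran)\mid \Tran\in\Tset_{\xvec}]$, and both rely on the explicit formula for $E_H(\xvec)$ coming from \cref{L:forest} (which is the paper's \cref{l:Prob_H}). Your computation of the coefficient $c_k$ via the linearisation in Charlier polynomials is correct and recovers precisely the constants the paper obtains.

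The execution differs in bookkeeping. The paper does \emph{not} linearise: it writes $E_H(\xvec)$ as a double sum of terms $R_{i,\uvec}(\xvec)=(x_{u_i})_{h_i}\prod_{j\neq i}(x_{u_j})_{h_j-1}$ and computes $\sum_{\uvec,\uvec'}\Cov(R_{i,\uvec}(\X),R_{t,\uvec'}(\X))$ directly using an exact factorial-moment identity for the multinomial (the paper's Lemma~A.2, which gives $\E\bigl[\prod_i (X_i)_{a_i}(X_i)_{b_i}\bigr]$ in closed form). This is then split by the size of the overlap of $\uvec$ and $\uvec'$: disjoint pairs contribute $-(\ell-1)^2$, single-overlap pairs contribute the double sum that produces $c_j^2 j!$, and larger overlaps are $O(n^{-1})$. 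This case analysis delivers the $O(1)$ error without any separate cross-covariance estimate.

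Your Charlier-basis route is a legitimate alternative and, conceptually, explains neatly why $k=1$ drops out (the constraint $\sum_v W_v=n-2$ is exactly $S_1=\text{const}$) and why $\Var[S_k]=nk!+O(1)$ for $k\ge 2$ (the coefficient of $1/n$ in $\Cov(C_k(W_1),C_k(W_2))$ is $-\bigl(\sum_r(-1)^{k-r}\binom{k}{r}r\bigr)^2=0$). The cost is the step you flag yourself: controlling the cross-covariance between $\sum_{k\ge 2}c_k S_k$ and the quadratic-and-higher remainder. Cauchy--Schwarz only gives $O(\sqrt{n})$, not $O(1)$, so you do need the cancellation you allude to; carrying it out amounts to the same third-moment computation the paper packages into its factorial-moment lemma. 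In short, your proposal is correct and essentially equivalent, but the paper's direct covariance computation is the cleaner way to close the gap you identify.
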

   The proof of Lemma \ref{l:Var_pattern} is given in Appendix A of the ArXiv version  \cite{TreesArxiv} of the current paper. The key idea of this proof is to  estimate the variance of the conditional expection value of   $N_{H}(\Tran)$ given the degree sequence of $\Tran$.

  \remark{
  There is a different way to show $\Vari{N_{H,\thetavec}(\Tran)} =\Omega(n)$
  for any fixed $H$  and $\thetavec$ (including the case $\theta_i=1$ for all $i \in [\ell]$). First, one establishes that $\Pr(N_{H,\thetavec}(\Tran)  = x_n) = o(1)$ for any sequence $x_n$. 
  Reducing/incrementing the number of  fringe copies of $H$ in a clever way shows that $\Pr(N_{H,\thetavec}(\Tran)  = x_n)$ is not much larger than   
  $\Pr(N_{H,\thetavec}(\Tran)  = x_n -k) + \Pr(N_{H,\thetavec}(\Tran)  = x_n +k)$
  for all $k$ from a sufficiently large set. This  implies  that 
   $\Vari{N_{H,\thetavec}(\Tran)} \rightarrow \infty$. Therefore, $p_{H,\theta}>0$   so $\Vari {N_{H,\thetavec}(\Tran)} = \Omega(n)$.   In fact, the proof of \cref{l:Var_pattern} 
  given in \cite[Appendix A]{TreesArxiv}  is more technically involved than this idea, but it extends better to  growing substructures.
  }
 
 \medskip
 
Using formula \eqref{v:patterns}, we also obtain a precise estimate of $\Vari{N_{H}(\Tran)}$ for the case when $H$ is a path.  With slight abuse of notations,  let
$P_\ell(T):= N_{P_\ell}(T)$  that is  the number of paths on $\ell$ vertices in a tree $T\in \Tset_n$.

\begin{lemma}\label{l:Varpaths}
  Let $\ell>2$  and $\ell = O(n^{1/2})$, then
  \[
  	\Vari{P_{\ell}(\Tran)} =\left (1+O\left(\dfrac{\ell^2}{n}\right)\right) n\ \frac{\ell(\ell-1)^2 (\ell-2)}{24}.
  \]
\end{lemma}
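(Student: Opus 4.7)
The plan is to expand the variance via formula \eqref{v:patterns}, writing
\[
\Vari{P_\ell(\Tran)} = \tfrac{1}{4}\sum_{\uvec, \uvec'} \Cov(\one_{\uvec}(\Tran), \one_{\uvec'}(\Tran))
\]
and grouping the ordered pairs of $\ell$-tuples by their overlap structure. A direct calculation from \cref{L:forest} (as in \cref{L:Var}) shows that disjoint pairs contribute $\Cov = 0$ exactly. For pairs with $k := |\{u_i\} \cap \{u_j'\}| \geq 1$, the key geometric observation is that two induced paths in a tree can intersect only in a sub-path; consequently $\E[\one_{\uvec}(\Tran) \one_{\uvec'}(\Tran)] \ne 0$ precisely when the $k$ shared vertices occupy consecutive positions in both $\uvec$ and $\uvec'$ and appear in the same or reversed order. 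Such pairs will be called \emph{compatible} and classified by the quadruple $(k, i, j, \epsilon)$, where $i, j$ are the starting positions of the shared sub-path in $\uvec, \uvec'$ and $\epsilon \in \{\pm\}$ is the orientation (trivial for $k = 1$).

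For a compatible pair with overlap $k \geq 1$, the union of the two path edge-sets is a tree on $2\ell - k$ vertices. Applying \cref{L:forest} to this tree together with $n - (2\ell - k)$ isolated vertices yields $\E[\one_{\uvec}(\Tran) \one_{\uvec'}(\Tran)] = (2\ell - k)/n^{2\ell - k - 1}$, while \eqref{e:patterns} gives $\E[\one_{\uvec}(\Tran)] \E[\one_{\uvec'}(\Tran)] = \ell^2 / n^{2\ell - 2}$. The case $k = 1$ produces the crucial cancellation $\Cov = -(\ell - 1)^2 / n^{2\ell - 2}$, whereas for $k \geq 2$ the first term dominates: $\Cov \sim (2\ell - k)/n^{2\ell - k - 1}$. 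The number of compatible patterns is $\ell^2$ for $k = 1$ and $2(\ell - k + 1)^2$ for $k \geq 2$, each containing $(n)_\ell (n - \ell)_{\ell - k} = n^{2\ell - k}(1 + O(\ell^2/n))$ pairs. Summing, the $k = 1$ contribution is $-\ell^2(\ell - 1)^2 n / 4$, while the $k \geq 2$ contribution equals $\tfrac{n}{2}\sum_{k = 2}^{\ell} (\ell - k + 1)^2 (2\ell - k)$; the substitution $m = \ell - k + 1$ and standard power-sum identities reduce the latter to $\ell (\ell - 1)^2 (7\ell - 2) n / 24$. Adding the two yields the claimed leading value $\ell (\ell - 1)^2 (\ell - 2) n / 24$; the near-cancellation between the opposite-sign contributions is precisely what reduces the variance from a naive $\Theta(n\ell^5)$ down to $\Theta(n \ell^4)$.

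The main technical obstacle is controlling the \emph{incompatible} pairs (overlap $k \geq 2$ with shared positions not forming a consistent sub-path): for these $\E[\one_{\uvec}(\Tran) \one_{\uvec'}(\Tran)] = 0$ but $\E[\one_{\uvec}(\Tran)] \E[\one_{\uvec'}(\Tran)]$ still contributes to $\Cov$. The number of incompatible pairs with overlap $k$ is at most $n^{2\ell - k} \ell^{2k}/k!$, contributing at most $\ell^{2k + 2} n^{2 - k}/k!$ to the variance; summed over $k \geq 2$ this is dominated by the $k = 2$ term of size $O(\ell^6) = O((\ell^2/n) \cdot n \ell^4)$, which is absorbed by the $(1 + O(\ell^2/n))$ factor provided $\ell = O(\sqrt n)$. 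Errors of the same order arise from the approximations $(n)_\ell (n - \ell)_{\ell - k} = n^{2\ell - k}(1 + O(\ell^2/n))$ and from the $\ell^2/n^{2\ell - 2}$ subtraction that is neglected in the $k \geq 2$ covariances.
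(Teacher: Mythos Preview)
Your proof is correct and follows essentially the same route as the paper's: both expand the variance via \eqref{v:patterns}, stratify by overlap size, use \cref{L:forest} to get $\Cov=0$ exactly for disjoint pairs and $\Cov = ((2\ell-k)n^{1-2\ell+k} - \ell^2 n^{2-2\ell})$ for compatible overlapping pairs, count $2(\ell-k+1)^2(n)_{2\ell-k}$ compatible patterns, and evaluate the resulting sum $-\ell^2(\ell-1)^2 + 2\sum_{k\ge 2}(\ell-k+1)^2(2\ell-k) = \ell(\ell-1)^2(\ell-2)/6$. If anything, you are more careful than the paper: you explicitly isolate and bound the contribution of \emph{incompatible} pairs (where the shared vertices do not form a common sub-path, so $\E[\one_{\uvec}\one_{\uvec'}]=0$ but the product of expectations is not), whereas the paper's written definition of $\Sigma_i$ includes these pairs yet its displayed formula $|\Sigma_i|((2\ell-i)n^{1-2\ell+i}-\ell^2 n^{2-2\ell})$ tacitly counts only the compatible ones---the omitted incompatible contribution is indeed $O(\ell^6) = O((\ell^2/n)\cdot n\ell^4)$ and absorbed in the error, exactly as you argue.
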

 \begin{proof}
 	 For the induced path counts
 	formula \eqref{v:patterns} simplifies as follows:
 	\[
	\Vari { P_{\ell}(\Tran)}
	        =  \dfrac{1}{4} 
	        \sum_{\uvec,\uvec'} \Cov (\one_{\uvec}(\Tran), \one_{\uvec'}(\Tran) ).
\]
For $i \in [\ell]$, let $\Sigma_i$ be the set of pairs $(\uvec,\uvec')$ that the sets formed by its coordinates have exactly $i$ elements in common.
From \eqref{e:patterns}, we have that 
$\Exp{ \one_{\uvec}(\Tran)} = \ell n^{1-\ell}$. Using \cref{L:forest}, we get 
$\Exp{\one_{\uvec}(\Tran)\one_{\uvec'}(\Tran)} = \ell^2 n^{2-2\ell}$ for $(\uvec,\uvec') \in \Sigma_0$, so
\[
	\sum_{(\uvec,\uvec')\in \Sigma_0} \Cov (\one_{\uvec}(\Tran), \one_{\uvec'}(\Tran)) = 0.
\]
Applying \cref{L:forest}, it is  a routine to check that 
\begin{align*}
 \sum_{(\uvec,\uvec') \in \Sigma_1} \Cov (\one_{\uvec}(\Tran), \one_{\uvec'}(\Tran) )
  &= |\Sigma_1|  \left( (2\ell -1) n^{2-2\ell} - \ell^2 n^{2-2\ell}\right)  \\&=
    - \frac{(n)_{2\ell-1}}{n^{2\ell-2}} \ell^2  (\ell-1)^2
 = 
     - \left (1+O\left(\dfrac{\ell^2}{n}\right)\right)    n \ell^2  (\ell-1)^2.
    \end{align*}
    Similarly, for $2 \leq i\leq \ell$, we get
    \begin{align*}
    	 \sum_{(\uvec,\uvec') \in \Sigma_i} \Cov (\one_{\uvec}(\Tran), \one_{\uvec'}(\Tran) )
    	 &= |\Sigma_i| \left( (2\ell -i) n^{1-2 \ell +i} - \ell^2 n^{2-2\ell}\right)
    	 \\
    	 &=  \left (1+O\left(\dfrac{\ell}{n}\right) \right)\frac{2 (n)_{2\ell-i}}{n^{2\ell-i-1}} (\ell- i+1)^2 (2\ell-i)
    	 \\
    	 &=\left (1+O\left(\dfrac{\ell^2}{n}\right)\right)    2n (\ell- i+1)^2 (2\ell-i).
    \end{align*}
    Summing the above bounds  for $\Sigma_0,\ldots, \Sigma_{\ell}$
    and using 
    \[
     -\ell^2  (\ell-1)^2 + 2 \sum_{i=2}^{\ell} (\ell- i+1)^2 (2\ell-i) = \frac{\ell(\ell-1)^2 (\ell-2)}{6}, 
    \]   we  get the stated formula  for
     $\Var   P_{\ell}(\Tran) $.
%    get that
%    \[
%    	\Var   P_{\ell}(\Tran) &= \left (1+O\left(\dfrac{\ell^2}{n}\right)\right)  \frac{n}{4} 
%    	\left( -  \ell^2  (\ell-1)^2 + \sum_{i=2}^{\ell} (\ell- i+1)^2 (2\ell-i) \right)
%    	\\ &=
%    \]
 \end{proof}

\subsection{Asymptotic normality of  pattern counts} \label{S:distr_patt}

Here we apply Theorem~\ref{T:main} to derive the limiting distribution of the pattern counts $N_{H,\thetavec}(\Tran)$. 
In fact,  all applications of Theorem~\ref{T:main} typically have short proofs leaving the lower bound for the variance to be the most technically involved part. 

\begin{thm} \label{T:CLT_patterns}
Let $H$ be a tree on $\ell$ vertices and $\thetavec\in\{0,1\}^{\ell}$ be a non-zero vector. Then $N_{H,\thetavec}(\Tran)$ is asymptotically normal and $\delta_{\mathrm{K}}\left[N_{H,\thetavec}(\Tran)\right]=O(n^{-1/4 + \eps})$ for  any $\eps>0$.
\end{thm}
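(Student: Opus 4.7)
The plan is to apply \cref{T:main} to a truncated version $\tilde F$ of $F := N_{H,\thetavec}$ and then transfer the conclusion back to $F$. Four things need to be checked: a bounded superposability parameter $\rho$, a polylogarithmic Lipschitz constant $\alpha$, a linear-in-$n$ variance lower bound, and a high-probability coupling between $\tilde F$ and $F$.

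First I would verify that $F$ itself is $(2\ell+1)$-superposable. Writing $F(T) = \sum_{(U,W)} \chi_{U,W}(T)$, where $\chi_{U,W}$ is the indicator that $(U,W)$ is an $(H,\thetavec)$-occurrence in $T$, a perturbation $\swap_i^{jk}$ can change $\chi_{U,W}$ only through the two modified edges $ij$ and $ik$. For either the induced-subgraph condition or the boundary condition defining $\chi_{U,W}$ to be affected, at least one of $\{i,j,k\}$ must lie in $U$ or be a $T$-neighbour of $U$, and since $U$ has diameter at most $\ell-1$ in the copy of $H$, every vertex of $U$ then lies within $T$-distance $\ell$ of $\{j,k\}$. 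Consequently, if $d_T(\{j,k\},\{b,c\}) \geq 2\ell+1$ then no occurrence $(U,W)$ is simultaneously affected by $\swap_i^{jk}$ and $\swap_a^{bc}$, and the four-term superposition identity holds term by term in the sum defining $F$.

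Next I would introduce the truncation. Set $D := \log n$ and define
\[
\tilde F(T) := \sum_{(U,W)} \chi_{U,W}(T)\,\mathbbm{1}\!\left[\max_{v\,:\,d_T(v,U)\leq \ell+2}\deg_T(v)\leq D-2\right].
\]
The buffer of $2$ in $D-2$ makes the local-degree condition robust to a single perturbation: distances and degrees in the $(\ell+2)$-neighbourhood of $U$ can each shift by at most $1$ under $\swap_i^{jk}$, so any occurrence counted in $T$ or in $\swap_i^{jk}T$ sits in a region where every vertex has $T$-degree at most $D$. An occurrence can change its counted status only if $U$ lies within distance $\ell+2$ of $\{i,j,k\}$, and by the degree bound the number of such $U$'s is at most $D^{O(\ell)} = (\log n)^{O(1)}$. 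Hence $\tilde F$ is $\alpha$-Lipschitz with $\alpha = (\log n)^{O(1)}$, and the superposability argument above carries over verbatim to give $\rho = O(1)$ for $\tilde F$ as well.

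For the variance, \cref{L:Var} gives $\Vari{F(\Tran)} = \Omega(n)$ when $s := \sum_i \theta_i < \ell$, while \cref{l:Var_pattern} gives the same bound when $s = \ell$ (for $\ell \geq 3$; the degenerate cases $\ell \in \{1,2\}$ are either constant or reduce to \cref{c:leaves}). By Moon's bound \eqref{eq:Moon}, $\Pr(\Deltamax(\Tran) > D-2) \leq n/(D-2)! = n^{-\omega(1)}$, so $\tilde F(\Tran) = F(\Tran)$ outside a set of superpolynomially small probability. Standard bookkeeping then transfers the variance lower bound to $\Vari{\tilde F(\Tran)} = \Omega(n)$ while keeping $|\E\tilde F(\Tran)-\E F(\Tran)|$ negligible. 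Plugging into \cref{T:main} with $\alpha = (\log n)^{O(1)}$, $\rho = O(1)$ and $\Vari{\tilde F(\Tran)} = \Omega(n)$ reduces its hypothesis to $n^{-1/4+o(1)} = O(n^{-\varepsilon})$ for any $\varepsilon < 1/4$, yielding $\delta_K[\tilde F(\Tran)] = O(n^{-1/4+\eps})$. A standard coupling argument using the superpolynomial tail bound above then transfers this Kolmogorov bound to $F(\Tran)$.

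The main obstacle is the careful Lipschitz analysis of $\tilde F$: one must confirm that the combined effect of changing the induced-subgraph structure on $U$ and the local-degree condition on the $(\ell+2)$-neighbourhood of $U$ only produces discrepancies for occurrences whose $U$ lies in a polylogarithmic-size ball around $\{i,j,k\}$. Once that local analysis is clean and the coupling error is estimated, the rest is routine bookkeeping.
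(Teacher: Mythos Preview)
Your proposal is correct and follows essentially the same strategy as the paper: truncate by degree to get a polylogarithmic Lipschitz constant, verify $O(\ell)$-superposability, invoke \cref{L:Var} and \cref{l:Var_pattern} for the $\Omega(n)$ variance, use Moon's bound \eqref{eq:Moon} to couple the truncated parameter back to $N_{H,\thetavec}$, and apply \cref{T:main}. The only difference is cosmetic: the paper uses the simpler global truncation ``count occurrences lying entirely in the induced subforest on vertices of degree $\leq \log n$'', which already makes the Lipschitz bound $2(\log n)^\ell$ immediate and the superposability with $\rho=3\ell$ transparent, whereas your local truncation with a buffer is more elaborate than necessary (and your claim that superposability ``carries over verbatim'' needs a one-line check that the degree-in-neighbourhood indicator is also only affected when $U$ is within $O(\ell)$ of $\{j,k\}$, which it is).
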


\begin{proof}
For a tree $T\in\mathcal{T}_n$, let $F(T)$ be the number of occurrences of $(H,\thetavec)$ in the induced subforest of $T$ 
for the set of vertices with degrees at most $\log n$ in $T$.

Removing one edge from $T$  can only destroy  at most  $ \log^{\ell} n$ patterns $(H,\thetavec)$ counted in $F(T)$.
Thus, $F$ is  $\alpha$-Lipschitz with $\alpha = 2 \log^{\ell} n$. If two perturbations $\swap_{i}^{jk}$ 
and $\swap_{a}^{bc}$ are at distance at least 
 $3\ell$ in $T$ then every pattern  $(H,\thetavec)$  counted  in $F(\swap_{i}^{jk} \swap_{a}^{bc} T) - F(T)$ 
 (with positive or negative sign) is present in exactly one of  the terms
 $F(\swap_{i}^{jk}T) - F(T)$ and $F(\swap_{a}^{bc} T) - F(T)$ (with the same sign). Thus, 
 $F$ is  $\rho$-superposable with $\rho= 3 \ell$.
 
From \eqref{eq:Moon}, we know that  
\[ 
 \Pr (F(\Tran)\neq N_{H,\thetavec}(\Tran)) =  e^{-\omega(\log n)}.
 \] 
 Since the values of these random variables are not bigger than $n^{\ell}$, we get 
  \begin{align*}
   \Exp {F(\Tran)} &=  \Exp {N_{H,\thetavec}(\Tran)} +  e^{-\omega(\log n)},\\
      \Vari {F(\Tran)} &=  \Vari {N_{H,\thetavec}(\Tran)} + e^{-\omega(\log n)}.
   \end{align*}
Combining \cref{L:Var} and \cref{l:Var_pattern}, we get that $\Vari {F(\Tran)}  = \Omega(n)$.
 Applying Theorem~\ref{T:main}, we complete the proof.
\end{proof}

In the next result we allow the pattern to grow, but restricted to the case when $H$ is a path and all $\theta_i$ equal $1$ (all vertices are ``empty'').
\begin{thm}\label{T:CLT_paths}
Let  $\ell = O(n^{1/8-\delta})$ for some fixed $\delta \in (0,1/8)$. Then $P_\ell(\Tran)$ is asymptotically normal 
and  $\delta_{\mathrm{K}}\left[P_{\ell}(\Tran)\right]=O(n^{-\varepsilon'})$ for any $\varepsilon' \in (0, 2\delta)$.
\end{thm}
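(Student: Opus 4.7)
The plan is to adapt the strategy of the proof of \cref{T:CLT_patterns}: define a surrogate parameter $F$ that coincides with $P_\ell$ with overwhelming probability and satisfies the hypotheses of \cref{T:main}, then invoke the latter. The degree-based truncation used for fixed-size patterns leads to a Lipschitz constant of order $(\log n)^\ell$, which is super-polynomial for growing $\ell$; instead, I would truncate via the neighbourhood-size control supplied by \cref{T:beta}.

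Set $B := \log^4 n$. Call a vertex $v$ \emph{good in $T$} if $|N_d^T(v)| \leq B d$ for every $1 \leq d \leq C\ell$, where $C$ is a suitably large absolute constant chosen during the Lipschitz analysis. Let $F(T)$ be the number of induced paths on $\ell$ vertices in $T$ whose full $2\ell$-neighbourhood in $T$ consists only of good vertices. By \cref{T:beta}, $\Pr(\beta(\Tran) \leq B) = 1 - e^{-\omega(\log n)}$, and on this event every vertex is good, so $F(\Tran) = P_\ell(\Tran)$. Both quantities are bounded by $n^2$, so their expectations and variances differ by $e^{-\omega(\log n)}$, and combined with \cref{l:Varpaths} this gives $\Vari{F(\Tran)} = \Theta(n\ell^4)$.

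For the Lipschitz bound, fix a perturbation $\swap_i^{jk}$ and set $T' := \swap_i^{jk}T$. Any path contributing to $|F(T) - F(T')|$ either (a) uses the modified edge ($ij$ in $T$ or $ik$ in $T'$), or (b) has the same vertex set in $T$ and $T'$ but gains or loses a bad vertex in its $2\ell$-neighbourhood. For (a), the endpoints of the perturbed edge lie in the required good neighbourhood and therefore satisfy $|N_d(i)|,|N_d(j)| \leq Bd$; splitting the path at the perturbed edge and summing over the split position yields $\sum_{d_1 = 0}^{\ell-2}(Bd_1 + 1)(B(\ell - 2 - d_1) + 1) = O(\ell^3 B^2)$ paths of each type. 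For (b), a vertex can flip its goodness only if it lies within $C\ell$ of $\{i,j,k\}$, so an affected path is within $(C+2)\ell$ of $\{i,j,k\}$, and a handshake over the (bounded) $C\ell$-balls of the good vertices required in the path's neighbourhood bounds the number of affected paths by $O(\ell^2 B^2)$. Combining gives $\alpha = O(\ell^3 B^2) = O(\ell^3 \log^8 n)$. Superposability with $\rho = O(\ell)$ follows because two perturbations at $T$-distance at least a large constant multiple of $\ell$ have disjoint $(C+2)\ell$-regions of influence on $F$.

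Plugging $\alpha = O(\ell^3 \log^8 n)$, $\rho = O(\ell)$, $\Vari{F(\Tran)} = \Theta(n\ell^4)$, and $\ell = O(n^{1/8 - \delta})$ into \cref{T:main}, the two error terms become
\[
  \frac{n\alpha^3}{\Vari{F(\Tran)}^{3/2}} = O\!\left(\frac{\ell^3 \log^{24} n}{n^{1/2}}\right) = O\bigl(n^{-1/8 - 3\delta + o(1)}\bigr), \qquad \frac{n^{1/4}\alpha\rho}{\Vari{F(\Tran)}^{1/2}} = O\!\left(\frac{\ell^2 \log^8 n}{n^{1/4}}\right) = O\bigl(n^{-2\delta + o(1)}\bigr),
\]
the binding constraint being the second. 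Both are $O(n^{-\varepsilon})$ for any $\varepsilon < 2\delta$, so \cref{T:main} yields $\delta_{\rm K}[F(\Tran)] = O(n^{-\varepsilon'})$ for $\varepsilon' \in (0, 2\delta)$, and the same rate carries over to $P_\ell(\Tran)$ because $F(\Tran) = P_\ell(\Tran)$ with probability $1 - e^{-\omega(\log n)}$. The main technical obstacle will be step~(b) of the Lipschitz analysis, which must be established \emph{uniformly} in $T$ — including on trees where $\{i,j,k\}$ have atypically large neighbourhoods. The slack built into the goodness radius $C\ell$, together with the self-bounding property of good vertices' balls (any good $u$ near the perturbation contributes via its own $C\ell$-ball rather than $i$'s or $j$'s), is what should keep the count uniform in $T$.
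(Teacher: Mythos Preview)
Your proposal follows the paper's route: truncate via \cref{T:beta}, obtain $\alpha=O(\ell^3\log^8 n)$, $\rho=O(\ell)$, carry over $\Vari{F(\Tran)}=\Theta(n\ell^4)$ from \cref{l:Varpaths}, and apply \cref{T:main}; your final arithmetic is correct and matches the paper's.

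The difference is in the surrogate. The paper takes the simpler $F(T)=$ number of $\ell$-paths in the induced subforest $T[V_{\rm{good}}(T)]$, where $V_{\rm{good}}(T)$ collects vertices whose sphere at \emph{every} distance $d\in[n]$ has size at most $d\log^4 n$ (no radius cut-off, no $2\ell$-buffer). Its Lipschitz argument is the single observation that a fixed edge with good endpoints lies on at most $\tfrac12\ell^3\log^8 n$ counted paths, followed by ``arguing similarly to the proof of \cref{T:CLT_patterns}''; your case~(b) is not isolated. Your localised-goodness construction is a natural attempt to make~(b) a genuinely local count by confining goodness flips to an $O(\ell)$-ball of the perturbation. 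But note the mismatch in your self-bounding handshake: with goodness radius $C\ell$ and buffer $2\ell$, the interaction region has radius $(C+2)\ell$, whereas a good vertex only controls balls out to radius $C\ell$, so one good vertex near $\{i,j,k\}$ cannot by itself cover the whole interaction region for any finite $C$. Using the paper's all-$d$ goodness (so that a single good vertex near the perturbation bounds balls of arbitrary radius) removes this mismatch and lets the handshake close.
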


\begin{proof}
%For $\ell$ not very large, an asymptotical normality of $P_{\ell}(\Tran)$ follows from Theorem~\ref{T:main}. Indeed, 
For a tree $T\in \Tset_n$, let 
\[
V_{\rm{good}}(T) := \left\{ i \in [n]  \st  \text{for all $d\in [n]$, we have } 
 |\{j \in[n] \st d_T(i,j) = d\}| \leq d \log^4 n\right\}.
\]
Define $F(T)$  to be the number of induced paths on $\ell$ vertices in  the forest $T[V_{\rm{good}}(T)]$.

The number of $\ell$-paths counted in $F(T)$ containing any fixed edge  is at most 
\[
	 \log^8 n \sum_{i = 1}^{\ell-2}  i (\ell-i-1) \leq \dfrac12\ell^3 \log^8 n.
\]
Arguing similarly to the proof of \cref{T:CLT_patterns}, we conclude that  $F$ is $\alpha$-Lipschitz 
with $\alpha= \ell^3 \log^8 n$ and $\rho$-superposable with $\rho=3\ell$.
From \cref{T:beta},  we also get 
\[ 
 \Pr (F(\Tran)\neq P_{\ell}(\Tran)) =  e^{-\omega(\log n)}.
 \] 
 Next, for a  tree $T\in \Tset_n$,  observe that $F(T)\leq P_{\ell}(T) \leq n^2$  since any path in $T$ 
 is uniquely determined by the choice of its end vertices. 
 The rest of the argument is identical to  the proof of \cref{T:CLT_patterns}.
\end{proof}

%%%%%%%%%%%%%%%%%%%%%%%%%%%%%%%%%%%%%%
%%%%%%%%%%%%%%%%%%%%%%%%%%%%%%%%%%%%%%
%%%%%%%%%%%%%%%%%%%%%%%%%%%%%%%%%%%%%%
%%%%%%%%%%%%%%%%%%%%%%%%%%%%%%%%%%%%%%
%%%%%%%%%%%%%%%%%%%%%%%%%%%%%%%%%%%%%%%%%%%%%%%%%%%%%%%%%%%%
%%%%%%%%%%%%%%%
\section{Number of automorphisms}\label{S:auto}
%%%%%%%%%%%%%%%%%%%%%%%%%%%%%%%%%%%%%%
%%%%%%%%%%%%%%%%%%%%%%%%%%%%%%%%%%%%%%
%%%%%%%%%%%%%%%%%%%%%%%%%%%%%%%%%%%%%%
%%%%%%%%%%%%%%%%%%%%%%%%%%%%%%%%%%%%%%
%%%%%%%%%%%%%%%%%%%%%%%%%%%%%%%%%%%%%%%%%%%%%%%%%%%%%%%%%%%%
%%%%%%%%%%%%%%%

An \emph{automorphism} of a graph $G$ is a bijection $\sigma: V(G) \to V(G)$ such that the edge set of $G$ is preserved under $\sigma$.  Bona and Flajolet \cite{Bona2009} studied this parameter for random unlabelled rooted non-plane trees and random phylogenetic trees (rooted non-plane binary trees with labelled leaves). They showed that in both cases  the distribution is \emph{asymptotically lognormal}; that is, the logarithm of the number of automorphisms in a random tree is asymptotically normal.  McKeon \cite{McKeon1996} proved asymptotic formulas for the number automorphisms in related random models of  unlabelled locally restricted trees. 

 In her PhD thesis Yu \cite{Yu2012} determined the asymptotics of $\Exp{\log \Autabs{\Tran}}$ for uniform random labelled tree  $\Tran$. She also made the following conjecture:

%A \emph{rooted tree automorphism} is an automorphism $\sigma$ of a tree $T$ with root $r$ such that $\sigma(r) = r$. $\Autr{T}$ is the group of rooted automorphisms of $T$. 

%\begin{definition}
%	A rooted tree automorphism is an automorphism $\sigma$ of a tree $T$ with root $r$ such that $\sigma(r) = r$. $\Autr{T}$ is the group of rooted automorphisms of $T$.
%\end{definition}

\begin{conj}\label{conj:log-normal}\cite{Yu2012}
	The distribution of $\Autabs{\Tran}$ is asymptotically lognormal.
\end{conj}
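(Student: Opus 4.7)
The plan is to apply \cref{T:main} to a suitable truncated version of $F(T) := \log \Autabs{T}$. The key structural fact is that tree automorphisms act locally: rooting $T\in\Tset_n$ at one of its (at most two) centroids yields
$\log\Autabs{T} = c(T) + \sum_{v}\sum_H \log(m_v(H)!)$,
where $c(T)\in\{0,\log 2\}$ corrects for a possible swap of two isomorphic central branches, $v$ ranges over vertices, $H$ over isomorphism classes of rooted trees, and $m_v(H)$ counts the children of $v$ whose fringe subtree has type $H$. Up to an additive $O(1)$, $\log\Autabs{T}$ is therefore a sum of local contributions, which is exactly the shape of parameter for which \cref{T:main} is designed.

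First I would introduce a modified parameter $\tilde F(T)$ that restricts the sum above to a \emph{good} set of vertices: those whose degree in $T$ is at most $\log n$ and whose distance-$d$ ball has at most $d\log^4 n$ vertices, as in the proof of \cref{T:CLT_paths}, and that further truncates each rooted-subtree type to depth $D := \log^{10} n$. Each surviving local term is then bounded by $\log((\log n)!) = O(\log^2 n)$, and a single perturbation $\swap_i^{jk}$ can alter only the local terms attached to vertices in the $D$-neighbourhood of $\{i,j,k\}$, of which there are $O(D\log^5 n)$ under the good-vertex hypothesis. This yields that $\tilde F$ is $\alpha$-Lipschitz with $\alpha = \mathrm{polylog}(n)$ and $\rho$-superposable with $\rho = O(D)$, since perturbations at distance greater than $2D$ affect disjoint collections of depth-$D$ fringe subtrees.

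Next I would verify that $\tilde F(\Tran) = \log\Autabs{\Tran} + O(1)$ with probability $1 - e^{-\omega(\log n)}$. By \eqref{eq:Moon} and \cref{T:beta}, every vertex of $\Tran$ is good with this probability. Moreover, depth-$D$ truncation preserves the full isomorphism class of any fringe subtree with fewer than $D$ vertices; it therefore suffices to bound the probability that $\Tran$ contains two disjoint isomorphic fringe subtrees of size at least $D$ rooted at a common vertex. A union bound based on \cref{L:forest}, applied to the forest obtained by placing two disjoint labelled copies of a given tree of size $\geq D$ at a common neighbour, yields a probability of $n^{-\omega(1)}$, so these ``non-local'' automorphisms are negligible.

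The principal obstacle is the variance lower bound $\Vari{\tilde F(\Tran)} = \Omega(n)$. My plan is to isolate the contribution of a single symmetric motif, namely \emph{cherries}: a vertex whose set of children consists of exactly two leaves. Such a cherry contributes exactly $\log 2$ to $F$ and can be enumerated by a pattern count of the type analysed in \cref{S:patterns}. A second-moment computation using \cref{L:forest}, analogous to the proof of \cref{l:Var_pattern}, shows that the number of cherries at good vertices has variance $\Omega(n)$; since all other local contributions are non-negative and essentially independent from widely separated cherries, a standard covariance argument lifts this to $\Vari{\tilde F(\Tran)} = \Omega(n)$. With $\alpha = \mathrm{polylog}(n)$, $\rho = \mathrm{polylog}(n)$ and $\Vari{\tilde F(\Tran)} = \Omega(n)$, \cref{T:main} applies with some $\varepsilon > 0$ and delivers $\delta_{\mathrm{K}}[\log\Autabs{\Tran}] = O(n^{-\varepsilon'})$ for any $\varepsilon' \in (0,\varepsilon)$, confirming \cref{conj:log-normal}.
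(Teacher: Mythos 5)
Your overall plan — truncate $\log|\Aut(T)|$ to a well-behaved local parameter, apply \cref{T:main}, and transfer the conclusion back via exponentially small exceptional probabilities — is the right skeleton and matches the paper's strategy. However, there are two concrete gaps that would need to be repaired before this is a proof.

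First, rooting at the centroid is unstable under a single perturbation, which undermines both the Lipschitz and the superposability claims. A single swap $\swap_i^{jk}$ detaches the entire subtree on $i$'s side of the edge $ij$ and reattaches it at $k$; this can move the centroid by $\Theta(n)$ (take a path of length $n/2$ with a subtree of $n/2$ vertices hanging off one end, and move that subtree to the other end). Once the centroid jumps, the parent/child orientation of almost every vertex flips, so the claim that only terms in a $D$-neighbourhood of $\{i,j,k\}$ change is false. The paper sidesteps this by fixing an arbitrary label $r$ as root, which is invariant under relabelling-free edge swaps, and by bounding $\big|\log|\Aut_r(T)|-\log|\Aut(T)|\big|\le\log n$ (each rooted automorphism extends to at most $n$ unrooted ones). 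Note that even with a fixed root the Lipschitz bound is not proved by a local-counting argument of the kind you sketch; the paper gets $\alpha=3\log n$ directly from a group-theoretic observation (the subgroup of small automorphisms fixing $\{i,j,k\}$ has index at most $n^3$ by Lagrange), and gets $\rho$-superposability from defining a \emph{small automorphism} to be one that fixes every vertex rooting a branch on more than $4\log n$ vertices — this is shown to be a subgroup, and branches of small \emph{size} (not small \emph{depth}) cannot be affected simultaneously by two widely separated swaps. Your depth-$D$ truncation is not obviously a subgroup, so you would need additional work to make the quantity well-defined and to run the superposability argument.

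Second, the variance lower bound is where nearly all the work in the paper's proof lives, and the cherry heuristic does not close it. The issue is sign: even if the number of cherries has variance $\Omega(n)$, the full parameter is a sum over all $(i,B)$ of $\log(N_i(B)!)$, and the cross-covariances are not nonnegative — more cherries at $i$ means fewer children left for other branch types at $i$, and nearby vertices compete for the same label pool. One must show the negative covariance mass does not swallow the diagonal. The paper does this by establishing a Poisson approximation for individual and joint branch counts $N_i(B)$, $N_j^{-i}(C)$ (with explicit $O(1/n)$ error, see Lemma \ref{lem:aut-branch-probability}), turning the covariance sum into a sum over Poisson parameters $\lambda_B$, and then proving the explicit numerical inequality $\tfrac12-2.28\sum_B\lambda_B^2 e^{-\lambda_B}|B|>0$. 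A ``cherries plus nonnegativity'' argument does not account for this; a different quantitative comparison of the diagonal to the off-diagonal is unavoidable.
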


In this section we prove this conjecture. 
%The difference between the distribution of $\log\Autabs{T}$ and $\log\Autrabs{T}$ turns out to be small. We prove lognormality for rooted trees via Theorem \ref{T:main}; the unrooted result follows almost immediately due to a relation given later.
Unfortunately, we cannot immediately apply Theorem \ref{T:main} to derive the distribution of the number of automorphisms since the logarithm of
this parameter  is not $\rho$-superposable for a sufficiently small $\rho$. This happens because some trees have automorphisms 
 affected by both perturbations   $\swap_{i}^{jk}$ and $\swap_{a}^{bc}$ even if $d_T(\{j,k\}, \{b,c\})$ is large.
Instead,  we start by looking at $\Autr{T}$, the subgroup of $\Aut{T}$ consisting of automorphisms $\sigma \in \Aut{T}$
 such that $\sigma(r) = r$, where $r$ is some fixed vertex from $[n]$. In other words, $\Autr{T}$ is the set of root preserving automorphisms of a tree $T$ with root $r$, or equivalently the stabilizer of $r$.

 The parameter $|\Autr{\Tran}|$ is easier to work with while also remaining asymptotically very similar to $|\Aut{\Tran}|$. 
 The ease of analysis comes from the product representation of $\Autrabs{T}$ given by Yu \cite[Corollary 2.1.3]{Yu2012}. 
\begin{align}\label{eqn:yu-product}
	\Autrabs{T} &= \prod_{i \in [n]} \prod_{B}N_i(B,T,r)!
\end{align}
The product over $B$ represents a product over isomorphism classes of rooted unlabelled trees. Define a \emph{branch} of $T$ at $v$ to be a subtree rooted at an immediate descendent (with respect to $r$) of $v$. 
That is the branch is a fringe subtree of $T$ at this descendent. The term $N_i(B,T,r)$ denotes the number of branches isomorphic to $B$ at vertex $i$. Factorisation (\ref{eqn:yu-product}) also follows from the result of Stacey and Holton that says every rooted automorphism is a product of branch transpositions \cite[Lemma 2.4]{StaceyHolton1974}. 
%\begin{align}\label{eqn:yu-product}
%\Autrabs{T} &= \prod_{d,B} d!^{N(T,d,B)}
%\end{align}
%where $B$ represents a product over isomorphism classes of rooted unlabelled trees and $N(T,d,B)$ is the number of vertices with exactly $d$ branches isomorphic to $B$ in $T$. This can be rewritten as 

We give an example of \eqref{eqn:yu-product} in Figure \ref{fig:example-tree} for a tree on $9$ vertices. There are only three types of branches in this tree with repsect to the root $r=1$,  namely $B_1$, $B_2$, and $B_3$. Vertex 1 has two branches isomorphic to $B_2$, and thus $N_1(B_2,T,r)! = 2! = 2$. It also has one branch isomorphic to $B_1$, and thus $N_1(B_1,T,r)! = 1$. Vertex 2 has three branches isomorphic to $B_3$, and thus $N_2(B_3, T, r)! = 3! = 6$. Vertices 3 and 4 each have one branch isomorphic to $B_3$, and thus $N_3(B_3,T, r)! = N_4(B_3, T,r)! = 1$. 
Applying (\ref{eqn:yu-product}) shows that   $|\aut_r(T)| = 3! \cdot 2! = 12$.

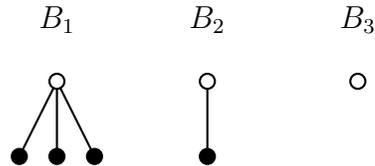
\begin{figure}[h!]
	\begin{center}
		\begin{tikzpicture}
		
		\tikzset{ vert/.style={black, fill=black, thick} }
		\tikzset{ vert2/.style={black, fill=white, thick} }
		\tikzset{ edge/.style={ thick, black } }
		
		% % % % tree
		%edges
		\draw[edge] (1,2) to (0,1);
		\draw[edge] (1,2) to (1,1);
		\draw[edge] (1,2) to (2,1);
		\draw[edge] (0,1) to (-0.5,0);
		\draw[edge] (0,1) to (0,0);
		\draw[edge] (0,1) to (0.5,0);
		\draw[edge] (1,1) to (1,0);
		\draw[edge] (2,1) to (2,0);
		%labels
%		\node[above] at (1,2.5) {$T$};
		\node[above] at (1,2) {$1$};
		\node[left] at (0,1) {$2$};
		\node[left] at (1,1) {$3$};
		\node[left] at (2,1) {$4$};
		\node[below] at (-0.5,0) {$5$};
		\node[below] at (0,0) {$6$};
		\node[below] at (0.5,0) {$7$};
		\node[below] at (1,0) {$8$};
		\node[below] at (2,0) {$9$};
		\node[above] at (1, 3) {Tree $T$ with root $r=1$};
		%vertices
		\draw[vert2] (1,2) circle [radius=0.1];
		\draw[vert] (0,1) circle [radius=0.1];
		\draw[vert] (1,1) circle [radius=0.1];
		\draw[vert] (2,1) circle [radius=0.1];
		\draw[vert] (-0.5,0) circle [radius=0.1];
		\draw[vert] (0,0) circle [radius=0.1];
		\draw[vert] (0.5,0) circle [radius=0.1];
		\draw[vert] (1,0) circle [radius=0.1];
		\draw[vert] (2,0) circle [radius=0.1];
		
		% % % % branches
		
		%branch1
		\begin{scope}[shift={(8,0)}]
		\node[above] at (0,1.5) {${B_2}$};
		\draw[edge] (0,0) to (0,1);
		\draw[vert] (0,0) circle [radius=0.1];
		\draw[vert2] (0,1) circle [radius=0.1];
		\node[above] at (0,3) {Branches of $T$};
		\end{scope}
		
		%branch2
		\begin{scope}[shift={(10,0)}]
		\node[above] at (0,1.5) {${B_3}$};
%		\draw[edge] (0,1) to (0.5,0);
%		\draw[edge] (0,1) to (-0.5,0);
		\draw[vert2] (0,1) circle [radius=0.1];
%		\draw[vert] (0.5,0) circle [radius=0.1];
%		\draw[vert] (-0.5,0) circle [radius=0.1];
		\end{scope}
		
		%branch3
		\begin{scope}[shift={(6,0)}]
		\node[above] at (0,1.5) {${B_1}$};
		\draw[edge] (0,0) to (0,1);
		\draw[edge] (0.5,0) to (0,1);
		\draw[edge] (-0.5,0) to (0,1);
		\draw[vert] (0,0) circle [radius=0.1];
		\draw[vert] (0.5,0) circle [radius=0.1];
		\draw[vert] (-0.5,0) circle [radius=0.1];
		\draw[vert2] (0,1) circle [radius=0.1];
		\end{scope}
		\end{tikzpicture}
	\end{center}
\caption{\label{fig:example-tree} A labelled tree on the left and its (rooted, unlabelled) branches on the right.}
\end{figure}

To define our tree parameter $F(T)$, we look at a subgroup of $\Autr{T}$ based on small automorphisms. We define a \emph{small branch} to be a branch with at most $4\log n$ vertices, any branch that is not small is \emph{large}. A \emph{small automorphism} is an automorphism where any vertex that is the root of a large branch is fixed. For a given tree $T$, let $\Autsmall \subseteq \Autr{T}$ be the set of small automorphisms.

\begin{lemma}
	$\Autsmall$ is a subgroup of $\Autr{T}$. 
\end{lemma}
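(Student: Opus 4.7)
The plan is to verify the three subgroup axioms directly from the definition. The key structural fact I will use is that $\Autsmall$ is defined as the set of elements of $\Autr{T}$ that pointwise fix a specific subset of the vertex set, namely
\[
	L(T) := \{v \in [n] \st v \text{ is the root of a large branch of } T\}.
\]
Once $\Autsmall$ is recast this way, the subgroup property becomes a completely standard observation about pointwise stabilisers.

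First I would note that the identity map on $[n]$ lies in $\Autr{T}$ and fixes every vertex, so in particular it fixes every vertex in $L(T)$; hence $\mathrm{id} \in \Autsmall$. Next, for closure under composition, I take $\sigma, \tau \in \Autsmall$ and any $v \in L(T)$; since $\tau(v) = v$ and $\sigma(v) = v$ by assumption, the composition satisfies $(\sigma\tau)(v) = \sigma(\tau(v)) = \sigma(v) = v$, and $\sigma\tau \in \Autr{T}$ because $\Autr{T}$ is already known to be a group (as the stabiliser of $r$ in $\Aut{T}$). Thus $\sigma\tau \in \Autsmall$. Finally, for closure under inverses, if $\sigma \in \Autsmall$ and $v \in L(T)$, then applying $\sigma^{-1}$ to both sides of $\sigma(v) = v$ yields $\sigma^{-1}(v) = v$, so $\sigma^{-1} \in \Autsmall$.

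There is essentially no obstacle here; the only small subtlety worth spelling out is why $\Autsmall$ is closed in the ambient group (as opposed to being merely closed under the pointwise stabiliser condition), but this is immediate since $\Autr{T}$ itself is a group. I would not even need to invoke the fact that a $\sigma \in \Autr{T}$ must send branches to isomorphic branches (and therefore large branches to large branches); that observation is useful for other arguments but is not required for verifying the subgroup axioms, because our defining condition is that the roots of large branches are \emph{fixed}, not merely \emph{permuted among themselves}.
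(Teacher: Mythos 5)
Your proof is correct, and it takes a genuinely more direct route than the paper. You recast $\Autsmall$ as the pointwise stabiliser in $\Autr{T}$ of the fixed set $L(T)$ of roots of large branches, and then invoke the completely general fact that pointwise stabilisers in a group are subgroups; the three axioms become one-line verifications. The paper instead argues by contradiction: it assumes $ab$ moves a large branch $B$ onto a different branch $B'$, notes that some but not all vertices of $B$ are fixed by $a$, uses connectedness of $B$ to find an edge straddling the fixed and moved parts, and concludes with a cycle argument. That route is more elaborate and relies on structural facts about how automorphisms act on branches (that branches map to disjoint branches, that the root of a large branch is fixed by both $a$ and $b$, etc.). Your closing remark is exactly the right observation: because the definition fixes the roots pointwise rather than merely permuting them as a set, none of the branch-level structure is actually needed here, so the pointwise-stabiliser argument is both shorter and less dependent on auxiliary claims. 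The one thing worth making explicit is that $L(T)$ is determined by the rooted tree $(T,r)$ alone, independently of any automorphism, which you implicitly use and which is indeed the case since ``root of a large branch'' is a purely tree-theoretic predicate.
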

	
\begin{proof}
	Observe that any automorphism in $\Autsmall$ must also have an inverse in $\Autsmall$, since they move the same vertices. Furthermore, to prove closure under composition, suppose that $a,b \in \Autsmall$ but $ab \notin \Autsmall$. Let $B$ be a large branch that is mapped by $ab$ onto $B^\prime$. Then all of the vertices in $B$ are moved by either $a$ or $b$. Since $a \in \Autsmall$, there are some vertices in $B$ not moved by $a$; denote this set by $X$. Since $B$ is connected, there exists an edge between $X$ and $V(B)\backslash X$ in the edge set of $B$. Thus there exists an edge between $aX$ and $aV(B)$ in $T$; however this creates a cycle and thus a contradiction. Thus $ab$ must also only move small branches, and thus $ab \in \Autsmall$. Thus $\Autsmall$ is a subgroup. 
\end{proof}

The parameter $F(T)$ is obtained by writing $|\Autsmall|$ in the same product representation as $\Autrabs{T}$ and taking the logarithm:
%\begin{align}\label{eqn:function-definition}
%F(T) &= \sum_{d = 1}^{4\log n}\sum_{|B| = 1}^{4\log n} N(T,d,B)\log d!\nonumber\\
%&= \sum_{i,B} \log (N_i(B)!) \Ind{|B| \leq 4\log n,\ N_i(B) \leq 4\log n}.
%\end{align}
\begin{align}\label{eqn:function-definition}
F(T) &:= \log |\Autsmall| = \sum_{i\in [n]}\sum_{B \in \Bsmall} \log (N_i(B, T,r)!).
\end{align}
Here $\Bsmall$ is the set of small branches.  

\remark{
     In fact, the parameter $F$ defined  above belongs to a larger class of  additive functionals 
     considered by Janson \cite{Janson2016} and Wagner \cite{Wagner2014}. They established a general CLT for this type of parameters.   \cite[Theorem 1.3]{Janson2016} and  \cite[Theorem 2]{Wagner2014} do not cover the number
     of automorphisms in $\Tran$ because 
     $
        \E \left[ \left(\sum_{B} \log (N_i(B, \Tran, r)!)\right)^{2} \right]
      $
      is not vanishing. In fact, it is bounded below by the second moment of the number of leaves attached to a given vertex which tends to a positive constant;  see also the estimates given in
      \cite[Appendix B]{TreesArxiv}. 
}

Next, we show that  $F(T)$  satisfies assumptions   of Theorem \ref{T:main} while also being very close to $\log \Autrabs{T}$.

\begin{lemma}\label{lem:F(T)-properties}
	Let $\alpha = 3\log n$ and $\rho = 10\log n$. Then $F(\Tran)$ as defined in (\ref{eqn:function-definition}) is $\alpha$-Lipschitz and $\rho$-superposable.
\end{lemma}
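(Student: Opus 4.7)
The plan is to exploit the product formula \eqref{eqn:yu-product}, which gives
\[
F(T) = \sum_{v \in [n]} \sum_{B \in \Bsmall} \log(N_v(B, T, r)!),
\]
together with the fact that every $B \in \Bsmall$ has at most $4 \log n$ vertices. I would root $T$ at $r$ throughout. For a perturbation $T \mapsto T' := \swap_{i}^{jk} T$, the multiset of branches at a vertex $v$ depends only on the rooted subtree at $v$, so it can only change when the perturbed edge lies within some subtree descending from $v$. Letting $L$ denote the least common ancestor of $\{i,j,k\}$ in $T$, the set of affected vertices is contained in the union of three chains: (i) strict ancestors of $L$ for which the branch through $L$ is small in $T$ or in $T'$, (ii) $L$ itself, and (iii) the vertices on the downward paths from $L$ to $j$ and from $L$ to $k$ whose next branch is small. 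Since the size of such a branch grows by at least one at each step up the rooted tree, each of these chains has length at most $4\log n$.

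For the $\alpha$-Lipschitz bound I would argue that at every affected $v$ exactly one or two small branches are swapped: some isomorphism class $B$ loses a copy while another $B'$ gains one, contributing
$\log ((N_v(B', T, r) + 1)/N_v(B, T, r))$
to $F(T') - F(T)$. The key observation is that across the entire chain of affected vertices, the relevant multiplicities $N_v(B, T, r)$ are precisely copy-counts of small subtrees in $T$, whose total product is bounded by $n$ (since each such copy consumes at least one vertex). This collapses the level-by-level contributions into a single $\log n$ term for each of the three chains (i)--(iii), yielding $|F(T) - F(T')| \leq 3\log n$.

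For the $\rho$-superposability with $\rho = 10\log n$ the argument is much more robust. A small branch has at most $4\log n$ vertices and hence diameter at most $4\log n$. If some small branch at a vertex $v$ were affected by both $\swap_{i}^{jk}$ and $\swap_{a}^{bc}$, it would necessarily contain a vertex from $\{j,k\}$ and a vertex from $\{b,c\}$, forcing $d_T(\{j,k\},\{b,c\}) \leq 4\log n$ and contradicting the hypothesis. Therefore at every $v$ the sets of small branches touched by the two perturbations are disjoint, so the contributions to $\sum_B \log(N_v(B,\cdot,r)!)$ factor across the perturbations; summing over $v$ gives the required additive identity.

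The main obstacle is the careful accounting behind the Lipschitz bound: although up to $O(\log n)$ vertices may have their branch multiset modified, the ratios of multiplicities must be shown to combine into a single bounded product of total multiplicity at most $n$, rather than compounding level-by-level into $n^{O(\log n)}$. The nesting of the affected branches along each of the three chains is what makes this work. The superposability is the easier half and follows essentially from the diameter bound on small branches combined with the observation that the affected branch at $v$ is the subtree rooted at the particular child of $v$ that lies on the path to the perturbation region.
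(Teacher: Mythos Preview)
Your Lipschitz argument takes a genuinely different route from the paper. The paper avoids the product formula entirely and argues group-theoretically: the subgroup of $\Autsmall(T)$ fixing $i,j,k$ pointwise is also a subgroup of automorphisms of $T'$ (since it fixes the swapped edges), and its index in $\Autsmall(T)$ is at most $n^3$ because cosets are determined by the images of $i,j,k$; Lagrange's theorem then gives $|F(T)-F(T')|\leq 3\log n$ in two lines, with the reverse inequality by symmetry. Your branch-counting approach can be pushed through, but the justification ``each such copy consumes at least one vertex'' only yields $\sum_v m_v\leq n$, not $\prod_v m_v\leq n$; the product bound actually comes from the recursive nesting you allude to at the end --- along a descending chain the branch at level $t$ is isomorphic to the entire subtree at level $t{-}1$, so each of its $m_t$ copies itself contains $m_{t-1}$ sub-copies, forcing $\prod_t m_t\leq n$. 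You also need a genuine case split on the parent--child orientation of the edge $ij$ and the position of $k$, since in some cases (for instance when $i$ is a child of $j$) a single vertex has two distinct branches changing and the removals and additions do not pair up symmetrically; one must check in each case that they organise into at most a bounded number of nested chains in $T$ and in $T'$ respectively. What your approach buys is that it stays entirely inside the additive decomposition \eqref{eqn:function-definition} and may give a slightly sharper constant; what the paper's approach buys is a two-line proof with no case analysis. Your superposability argument is essentially the paper's: both rest on the fact that a small branch has diameter at most $4\log n$, so two perturbations at distance $>10\log n$ cannot interact through a common small branch.
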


\begin{proof}
	To prove the Lipschitz property, we show that for any two trees $T$ and $T^\prime$ differing by a perturbation $\swap_i^{jk}$, the order of $\Autsmall$ for each tree can differ by at most a factor of $n^3$. Any automorphism of $T$ fixing $\left\{i,j,k \right\}$ is an automorphism of $T^\prime$, since all other edges remained static so their orbits are unaffected. Let $G_{ijk}$ be the subgroup of $\Autsmall$ that fixes $\left\{ i,j,k \right\}$. Then the cosets of this subgroup are defined by where they send each of these vertices. Since there are at most $n$ such options for each element in the set,  we get at most $n^3$ cosets. By Lagrange's theorem, we get that 
	\begin{align*}
		|\Autsmall(T)| \geq \frac{|\Autsmall(T^\prime)|}{n^3}
	\end{align*} 
	and vice versa by swapping the roles of $T$ and $T^\prime$. Taking the logarithm of both sides gives the desired bound. 
		
	Next, we show that $F$ is $\rho$-superposable. By the definition of this property and the triangle inequality, it is sufficient to prove that $\log(N_i(B,T,r)!)$ is $\rho$-superposible for all $i\in[n]$ and $B\in\Bsmall$. 
%	Combining the previous result with the triangle inequality gives the immediate upper bound of
%	\begin{align*}
%		\big| \AutF{T} -  \AutF{\swap_i^{jk}T} -  \AutF{\swap_a^{bc}T} + \AutF {\swap_i^{jk}\swap_a^{bc}T} \big| &\leq 	\big| \AutF{T} -  \AutF{\swap_i^{jk}T} \big|\\
%		&\quad + \big| \AutF{\swap_a^{bc}T} - \AutF {\swap_i^{jk}\swap_a^{bc}T} \big|\\
%		&\leq 6\log n.
%	\end{align*}
%	However, this bound is insufficient for larger $d$. Instead we argue that $\rho(d) = 0$ for all $d > 10\log n$. 
	Suppose 
	$$
	d = d_T \left( \left\{ j,k \right\}, \left\{ b,c\right\} \right) > 10\log n.
	$$
	Then suppose an $i$-branch isomorphic to $B$ %automorphism $\sigma \in \aut_{\rm small}(T)$
	 is created or destroyed by $\swap^{jk}_i$. %Then $\sigma$ must not fix $\left\{ i, j, k \right\}$.
	  Any path between one of $\left\{j,k\right\}$ and one of $\left\{b,c\right\}$ must be longer than $10\log n$. Therefore, any $i$-branch isomorphic to $B$ does not meet $\{a,b,c\}$. % any parent vertex in the tree is strictly more than $5\log n$ distance from at least one vertex in each pair. So $\sigma$ must fix $\left\{a,b,c\right\}$ and all lower branches, since each branch moved by the automorphism is at most $4\log n$.
	   So $\swap_a^{bc}$ cannot affect the presence or absence of an $i$-branch isomorphic to $B$. % $\sigma$ in $\Autsmall(T)$.	
%	Similarly, any automorphism created or destroyed by $\swap_a^{bc}$  can not be affected by $\swap_{i}^{jk}$.
	Thus,
	\[ 
 	F(\swap_{i}^{jk}\swap_{a}^{bc}T)	 - F(T) =
   \left(F(\swap_{i}^{jk}T) - F(T)\right) + 
   \left(F(\swap_{a}^{bc}T) - F(T)\right).
	\]
This completes the proof.
\end{proof}

In the next lemma we  derive bounds needed to compare  $|\aut(\Tran)|$ and $F(\Tran)$.
\begin{lemma}\label{lem:moment-relations}
	The following statements hold.
	\begin{enumerate}[label=(\alph*)]
		\item $\Big| \log \Autrabs{T} - \log \Autabs{T}\Big| \leq \log n$ for all $T\in \Tset_n$,
		\item $\Pro{F(\T) \neq \log\Autrabs{\T}} = O\left( \frac{1}{n^3} \right)$,
		\item $\Exp{\log \Autabs{\T}} - \Exp{F(\T)} = O(\log n)$,
		\item $\Vari{ \log \Autabs{\T}} - \Vari{F(\T)} = O\left(\sqrt{n \log^3 n}\right)$.
	\end{enumerate}
\end{lemma}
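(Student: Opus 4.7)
The plan is to prove the four items in the stated order, with (a) and (b) providing the core deterministic and probabilistic bounds from which (c) and (d) will follow. For (a), which I read as a bound on the difference of logarithms (the statement cannot hold literally, since highly symmetric trees like a star have $\Autabs{T}$ vastly larger than $\log n$), I would invoke the orbit--stabilizer theorem: $\Autr{T}$ is the stabilizer of $r$ in $\Aut{T}$, so $\Autabs{T}/\Autrabs{T}$ equals the size of the orbit of $r$ under $\Aut{T}$, a positive integer at most $n$. Taking logarithms then gives $\bigl|\log\Autabs{T}-\log\Autrabs{T}\bigr|\leq\log n$.

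For (b), the event $\{F(\T)\neq\log\Autrabs{\T}\}$ coincides with $\{\Autsmall(\T)\subsetneq\Autr{\T}\}$. Whenever a rooted automorphism $\sigma\notin\Autsmall(\T)$ exists, there must be two distinct vertices $v\neq v'$ of $\T$ at the same depth from $r$ whose fringe subtrees (with respect to the root $r$) are rooted-isomorphic and each contain more than $4\log n$ vertices. I plan to bound this latter structural event by a union bound over pairs $(v,v')$, using \cref{L:forest} to count the labelled trees in which the fringe subtrees at $v$ and $v'$ form two specified rooted-labelled trees of size $k>4\log n$. After normalising by $n^{n-2}$ and summing over the (at most $4^k$) rooted isomorphism types on $k$ vertices, I expect a per-pair probability of order $n^{-\Omega(\log n)}$, so the $O(n^2)$ pairs are amply absorbed and the total bound is comfortably $O(n^{-3})$. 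This is the main obstacle: the enumeration must carefully separate the dependence on the common rooted isomorphism type of the two fringe subtrees from the freedom in labelling the rest of the tree, and deliver a bound that decays super-polynomially in $n$.

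For (c), I would split
\begin{align*}
\Exp{\log\Autabs{\T}}-\Exp{F(\T)} = \Exp{\log\Autabs{\T}-\log\Autrabs{\T}} + \Exp{\log\Autrabs{\T}-F(\T)}.
\end{align*}
Part (a) bounds the first expectation by $\log n$ in absolute value, and for the second the crude deterministic bound $0\leq \log\Autrabs{T}-F(T)\leq\log n!=O(n\log n)$ combined with (b) yields $O(n\log n)\cdot O(n^{-3})=o(1)$. For (d), setting $A=\log\Autabs{\T}$, $R=\log\Autrabs{\T}$, $F=F(\T)$, I would decompose
\begin{align*}
\Vari{A}-\Vari{F} = \Vari{A-F} + 2\Cov(A-F,F).
\end{align*}
From $|A-F|\leq |A-R|+|R-F|$ together with (a) and (b), $\Exp{(A-F)^2}\leq 2\log^2 n+O((n\log n)^2)\cdot O(n^{-3})=O(\log^2 n)$, so $\Vari{A-F}=O(\log^2 n)$. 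Cauchy--Schwarz then gives $|\Cov(A-F,F)|\leq\sqrt{\Vari{A-F}\,\Vari{F}}$, and combining with the bound $\Vari{F}=O(n)$ established in \cref{sec:auto-variance} delivers the required estimate on $|\Vari{A}-\Vari{F}|$.
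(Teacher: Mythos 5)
Parts (a), (c), and (d) of your proposal are sound and essentially track the paper's argument. Your reading of (a) as a bound on the \emph{logarithms} is correct (the printed statement omits the logs), and the orbit--stabilizer argument is equivalent to the paper's direct count of cosets, both yielding $\Autrabs{T}\leq\Autabs{T}\leq n\Autrabs{T}$. Part (c) is the identical two-term split. For (d), your decomposition $\Vari{A}-\Vari{F}=\Vari{A-F}+2\Cov(A-F,F)$ differs cosmetically from the paper's use of $W=F(\T)-\log\Autrabs{\T}$ and $Z=\log\Autrabs{\T}-\log\Autabs{\T}$, but the two are algebraically equivalent; note, though, that both routes actually deliver $O(\sqrt{n}\log n)$ rather than the slightly sharper $O(\sqrt{n\log n})$ quoted in the statement (a harmless slack that is present in the paper's own proof as well).

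The genuine gap is in (b). The paper disposes of it by citing Yu [Cor.\ 2.2.2]; you propose a direct union bound. Your structural reduction — that $F(\T)\neq\log\Autrabs{\T}$ forces two distinct equi-depth vertices $v\neq v'$ with rooted-isomorphic fringe subtrees each of size $k>4\log n$ — is correct, and \cref{L:forest} is the right enumeration tool. But the quantitative claim does not survive the accounting you flag as "the main obstacle." For a single \emph{fixed labelled} pair of fringe subtrees of size $k$, \cref{L:forest} gives probability $\approx n^{2-2k}e^{2-2k}$, which is indeed $n^{-\Omega(\log n)}$ when $k>4\log n$. However, summing over the $\binom{n}{k-1}^2$ choices of the two vertex sets and over the roughly $(k-1)!\,k^{k-2}$ pairs of isomorphic labelled rooted trees on them (this is what "summing over rooted isomorphism types" must absorb) collapses the per-pair, per-$k$ contribution to about $e^{2-k}/k^{3/2}$. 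Summing over $k>4\log n$ gives about $n^{-4}(\log n)^{-3/2}$ per pair — polynomial, not super-polynomial — and multiplying by the $O(n^{2})$ pairs yields only $O(n^{-2}(\log n)^{-3/2})$, which misses the stated $O(n^{-3})$. This weaker bound would still be adequate for the downstream uses of (b) in parts (c) and (d), but it does not prove what the lemma asserts, so the direct route as sketched needs either a larger size threshold for "small" branches or a sharper count; the paper avoids the issue by deferring to Yu's computation.
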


\begin{proof}
	Each automorphism in $\Autr{T}$ is an automorphism in $\Aut{T}$. 
	The group $\Aut{T}$ operates on $[n]$ such that $\Autr{T}$  is the stabilizer of $r$. Hence 
	\[
		 \Autrabs{T} \leq \Autabs{T}  = |\operatorname{Orbit}(r)| \times \Autrabs{T} \leq n \Autrabs{T}.
	\]
	Thus, we get (a).
	Parts (b) follows almost immediately from results by Yu \cite[Corollary~2.2.2%, Theorem 2.3.2, Lemma 6
	]{Yu2012}. To show part (c), we  use  parts (a) and (b)  and observe 
	$F(T) \leq \Autrabs{T} \leq \log n! \leq n \log n$ to get that
	\begin{align*}
		\Exp{\log \Autabs{\T} - F(\T)} &< \max_{T} \abs{ \log \Autabs{T} - \log \Autrabs{T} } \\
		& \quad +    \Pro{F(\T) \neq \log\Autrabs{\T}}  n\log n\\
		& \leq \log n + O\left( \frac{\log n}{n^2} \right)
		 = O\left( \log n \right).
	\end{align*}
	 Finally, we proceed to part  (d).
	  Let $W = F(\Tran) - \log\Autrabs{\Tran}$ and $Z = \log \Autrabs{\Tran} - \log\Autabs{\Tran}$.
	   From \cref{lem:moment-relations}(a,b,c), we get that 
	   \begin{align*}
	   	|\Vari{W} +  \Cov(Z,W)|  &\leq  \Pro{F(\T) \neq \log\Autrabs{\T}}  2 n^2\log^2 n = O\left(\frac{\log^2 n} {n}\right)
	   	\\
	   		   	\Vari{Z}  &\leq \E Z^2 \leq  \log^2 n, 
	   		   	\\
	   		| \Cova{F(\Tran),W+Z}| &\leq \left( \Vari{F(\Tran)}  \Vari{W+Z}\right)^{1/2}
	   		 = O(\sqrt{n \log^3 n}).
	   \end{align*}
	   Then, we have 
%	\begin{align*}
%		\Vari{\log \Autrabs{\T}} &= \Vari{ F(\T) + \log \Autrabs{\T} - F(\T)}\\
%				&= \Vari{F(\T)} + \Vari{\log \Autrabs{\T} - F(\T)} + 2\Cova{F(\T), \log \Autrabs{\T} - F(\T)}\\
%				&= \Vari{F(\T)} + O\left( \frac{\log^2n}{n} \right).
%	\end{align*} 
%	Finally we relate this back to unrooted automorphisms. Let $W = F(\Tran) - \log\Autrabs{\Tran}$ and $Z = \log \Autrabs{\Tran} - \log\Autabs{\Tran}$. Then 
	\begin{align*}
		\Vari{\log \Autabs{\T}} = \Vari{F(\T) + W + Z}= \Vari{F(\T)} +  O(\sqrt{n \log^3 n}).
	\end{align*}
\end{proof}

%To show that $F(T)$ and $\log \Autrabs{T}$ have the same asymptotic distribution, we use the following two lemmas by Yu \cite{Yu2012}. In particular, Lemma \ref{lem:yu-no-big-autos} gives some intuition to why $F(T)$ is not far from $\log \Autrabs{T}$ asymptotically.
%
%\begin{lemma}(\cite{Yu2012})\label{lem:yu-no-big-autos}
%	Let $N_{m,v}$ be the number of ordered pairs of isomorphic $m$-vertex branches adjacent to $v$. Let 
%	\begin{align*}
%	N = \sum_{v=1}^{n} \sum_{m \geq \xi} N_{m,v}
%	\end{align*}
%	where $\xi = (a + 1) \log n$ for $a \geq 1$ is some constant. Then $\Pro{N \neq 0} = O\left( \frac{1}{n^a} \right)$.
%\end{lemma}

%\begin{lemma}(\cite{Yu2012}) \label{lem:aut-expectation}
%	$\Exp{\log \Autabs{T}} = \mu n + O(\log^2 n)$, where $\mu \approx 0.051$.
%\end{lemma}

The final ingredient needed to apply Theorem \ref{T:main} is a bound on the variance of $F(\Tran)$, given in the lemma below. 
%The proof of \cref{lem:linear-moments} is long and technical and is thus postponed until \cref{sec:auto-variance}.
\begin{lemma}\label{lem:linear-moments}
	For sufficiently large $n$, we have $\Vari{F(\T)} \geq 0.002\,n$.
\end{lemma}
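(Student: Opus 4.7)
The plan is to decompose $F$ into local contributions, compute moments via forest enumeration, and exploit approximate independence of distant neighbourhoods. Write $F(T) = \sum_{v\in[n]} g_v(T)$ with
\[
g_v(T) := \sum_{B\in\Bsmall}\log\bigl(N_v(B,T,r)!\bigr) \geq 0,
\]
so that each $g_v(T)$ depends only on the structure of $T$ within graph-distance $4\log n$ of $v$. Note that the inequality $\sum_i \log(a_i!) \leq \log\bigl((\sum_i a_i)!\bigr)$ combined with \eqref{eq:Moon} gives the uniform high-probability bound $g_v(\T) = O(\log n \cdot \log\log n)$.

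The dominant source of fluctuation at $v$ comes from the number $L_v(T)$ of leaf neighbours of $v$: the single-vertex branch is small and contributes the summand $\log(L_v!)$ to $g_v$. Applying \cref{L:forest} to the star with centre $v$, $k$ designated leaf neighbours, and $n-k-1$ singleton components gives
\[
\E\left[\binom{L_v(\T)}{k}\right] = \binom{n-1}{k}\,\frac{(n-k)^{n-k-2}}{n^{n-2}} = \frac{e^{-k}}{k!}\bigl(1 + O(k^2/n)\bigr),
\]
so the factorial moments of $L_v(\T)$ converge to those of $\operatorname{Poisson}(1/e)$. A direct numerical evaluation then yields $\Var\bigl(\log(L_v(\T)!)\bigr) \to c_0 \approx 0.045$, showing that the diagonal contribution $\sum_v \Var(g_v(\T))$ is at least approximately $c_0\,n$ in the limit (the other summands of $g_v$ only add nonnegative contributions, controlled by analogous forest enumerations).

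To handle the off-diagonal terms $\sum_{v\ne w} \Cov(g_v(\T), g_w(\T))$, split into far pairs (tree-distance greater than $10\log n$) and near pairs. For far pairs the two local configurations use disjoint edges, so applying \cref{L:forest} to the two-component forest encoding both structures factorises the joint probability up to a multiplicative error of $1 + O(n^{-1+o(1)})$, producing a total contribution of $O(n^{1+o(1)})$. For near pairs, \cref{T:beta} implies that with probability $1 - e^{-\omega(\log n)}$ each vertex has at most $\log^{O(1)} n$ vertices within tree-distance $10\log n$, so the number of near pairs is at most $n\log^{O(1)} n$; combined with a careful near-pair analysis that tracks the leading polynomial-in-$1/e$ coefficients in $\E[g_v(\T)g_w(\T)] - \E[g_v(\T)]\E[g_w(\T)]$, the sum of near-pair covariances is shown to be $o(n)$. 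Putting everything together yields $\Var(F(\T)) \geq c_0\,n - o(n) \geq 0.002\,n$ for all sufficiently large $n$.

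The principal obstacle is the near-pair covariance analysis: individual such covariances may be of constant size, and there can be $\Omega(n\log^{O(1)} n)$ of them in total, so a naive triangle-inequality bound is too weak. Showing that the leading-order polynomial-in-$1/e$ contributions to $\E[g_v(\T)g_w(\T)]$ and $\E[g_v(\T)]\E[g_w(\T)]$ cancel up to $o(1/n)$ per pair — via a careful application of \cref{L:forest} to forests encoding the joint local structure at $v$ and $w$ — is where the technical bulk of \cref{sec:auto-variance} is expected to lie.
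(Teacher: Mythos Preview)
Your near/far split is not well-defined: you propose to separate pairs $(v,w)$ according to whether $d_{\T}(v,w)>10\log n$, but this distance is itself a random variable depending on $\T$, while $\Cov(g_v(\T),g_w(\T))$ is a fixed number.  There is no deterministic set of ``far pairs'' over which to sum covariances, and conditioning on $d_{\T}(v,w)$ does not decompose the covariance in any useful way.  Your invocation of \cref{T:beta} to count near pairs confirms the confusion: that theorem bounds a random quantity, not the index set of a deterministic sum.

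Even if one could make sense of the split, your error control for far pairs is too weak.  You claim the joint probability factorises up to $1+O(n^{-1+o(1)})$, giving a per-pair covariance of $O(n^{-1+o(1)})$; summed over $\Theta(n^2)$ pairs this is $O(n^{1+o(1)})$, which can swamp the $\Theta(n)$ diagonal term you are trying to protect.  In fact the true per-pair covariance for a typical pair is of exact order $-c/n$ with a specific constant $c>0$, and the whole proof hinges on showing numerically that this constant is small enough.  The paper handles this by introducing $N_i^{-j}(B)$, the number of $B$-branches at $i$ not containing $j$: since $N_i^{-j}(B)\le N_i(B)$ one gets a clean deterministic lower bound on the product, and the pair $(N_i^{-j}(B),N_j^{-i}(C))$ admits exact factorial-moment computations via \cref{L:forest} (two disjoint branch clusters plus the rest), yielding the precise correction factor $e^{-(bk+c\ell)/n}$ rather than a crude $1+O(n^{-1+o(1)})$.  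This is the missing idea: a deterministic surrogate that is simultaneously a lower bound and tractable enough to extract the leading $1/n$ coefficient of every off-diagonal covariance.
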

The proof of Lemma \ref{lem:linear-moments} is lengthy and quite technical. We include it 
 in Appendix B of the ArXiv version  \cite{TreesArxiv}  of  the current paper.

Now, we are ready to prove the following result.

\begin{thm}\label{thm:log-normal}
	Conjecture \ref{conj:log-normal} is true. Furthermore, $\delta_K\left[ \log \Autabs{\T} \right] = O\left(n^{-\frac{1}{4} + \epsilon}\right)$ and $\delta_K\left[ \log \Autrabs{\T} \right]= O\left(n^{-\frac{1}{4} + \epsilon}\right)$ for any $\epsilon > 0$.
\end{thm}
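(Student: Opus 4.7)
The plan is to apply \cref{T:main} to the parameter $F$ defined in \eqref{eqn:function-definition} and then transfer the resulting asymptotic normality first to $\log\Autrabs{\Tran}$ and then to $\log\Autabs{\Tran}$ via \cref{lem:moment-relations}. To verify the hypotheses of \cref{T:main}, I would combine \cref{lem:F(T)-properties} (which supplies $\alpha = 3\log n$ and $\rho = 10\log n$) with \cref{lem:linear-moments} (which supplies $\Vari{F(\Tran)} \geq 0.002\,n$); a routine substitution gives
\[
\frac{n\alpha^3}{\Vari{F(\Tran)}^{3/2}} + \frac{n^{1/4}\alpha\rho}{\Vari{F(\Tran)}^{1/2}} = O\bigl(n^{-1/4}\log^2 n\bigr),
\]
so the hypothesis holds with any constant $\varepsilon < 1/4$, and \cref{T:main} yields $\delta_K[F(\Tran)] = O(n^{-1/4+\epsilon})$ for each fixed $\epsilon > 0$.

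The next step is to pass from $F(\Tran)$ to $\log\Autrabs{\Tran}$. Set $W := \log\Autrabs{\Tran} - F(\Tran)$. Then \cref{lem:moment-relations}(b) gives $\Pro{W \neq 0} = O(n^{-3})$, while deterministically $|W| \leq \log(n!) = O(n\log n)$. Parts (c) and (d) of the same lemma imply that the means of $F(\Tran)$ and $\log\Autrabs{\Tran}$ differ by $O(\log n)$ and their variances by $O(\sqrt{n\log n})$. Since $\sigma_{F(\Tran)} = \Theta(\sqrt n)$, these discrepancies translate into a relative variance perturbation of order $O(\sqrt{\log n/n})$ and a standardized mean shift of order $O(\log n/\sqrt n)$. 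A standard coupling estimate will then bound the Kolmogorov distance between the standardizations of $F(\Tran)$ and $\log\Autrabs{\Tran}$ by the rare-event contribution $O(n^{-3})$ plus $\sup_{t \in \Reals}[\Phi(t + O(\log n/\sqrt n)) - \Phi(t)] = O(\log n/\sqrt n)$. Both terms are absorbed into $O(n^{-1/4+\epsilon})$, yielding $\delta_K[\log\Autrabs{\Tran}] = O(n^{-1/4+\epsilon})$.

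The final transfer from $\log\Autrabs{\Tran}$ to $\log\Autabs{\Tran}$ is more straightforward because \cref{lem:moment-relations}(a) gives the pointwise bound $|\log\Autrabs{T} - \log\Autabs{T}| \leq \log n$ for every $T \in \Tset_n$, so there is no exceptional event to control; the same Kolmogorov comparison, now with only bounded mean and variance corrections, yields $\delta_K[\log\Autabs{\Tran}] = O(n^{-1/4+\epsilon})$ and therefore proves \cref{conj:log-normal}. The main subtlety lies in the middle step: the deterministic worst-case gap $|W|$ between $F(\Tran)$ and $\log\Autrabs{\Tran}$ is polynomial in $n$, so it is crucial that the tail probability $O(n^{-3})$ from \cref{lem:moment-relations}(b) decays far faster than the target rate $n^{-1/4+\epsilon}$; otherwise the rare trees on which $\Autsmall$ fails to recover all of $\Autr{\Tran}$ could in principle dominate the Kolmogorov distance. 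Once this tail is controlled, the remaining mean/variance corrections are easy to absorb.
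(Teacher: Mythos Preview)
Your proposal is correct and follows essentially the same route as the paper: apply \cref{T:main} to $F$ using \cref{lem:F(T)-properties} and \cref{lem:linear-moments}, then transfer the Kolmogorov bound successively to $\log\Autrabs{\Tran}$ and $\log\Autabs{\Tran}$ via \cref{lem:moment-relations}. Your write-up is in fact more explicit than the paper's own proof, which simply cites the three lemmas and then notes that $F(T)\le\log\Autrabs{T}\le\log\Autabs{T}$ together with the moment relations force the same limiting distribution; the only minor gloss in your sketch is that the shift $t\mapsto t'$ in the Kolmogorov comparison has a $t$-dependent component from the variance ratio $\sigma_A/\sigma_F$, but this is handled by the standard observation that $\sup_t|\Phi(at+b)-\Phi(t)|=O(|a-1|+|b|)$.
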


\begin{proof}[Proof of Theorem \ref{thm:log-normal}]
		Combining     \cref{lem:F(T)-properties},  \cref{lem:moment-relations}, and \cref{lem:linear-moments}, we get that 
		the parameter $F$ defined in \eqref{eqn:function-definition} satisfies all the assumptions of Theorem \ref{T:main} 
		and $\delta_k[F(\Tran)] = O(n^{-1/4 + \epsilon})$  for any $\epsilon >0$.
	Using  \cref{lem:linear-moments} and recalling that $F(T) \leq \log\Autrabs{\T} \leq \log|\Aut{\T}|$, 
	we get $\log\Autrabs{\T}$ and $\log|\Aut{\T}|$  has the same limiting distribution (with the same bound for the Kolmogorov distance).
\end{proof}

\begin{remark}
	%A recent result by Wagner \cite{Wagner-Talk} gives the variance, \as{as well as the distribution,} of $\log\Autrabs{T_n}$ (and thus, with minimal work, $\Vari{F(T_n)}$) with far greater accuracy than our lemma (specifically that $\sigma^2(\log \Autabs{T_n}) \approx 0.039498n + O(1)$). However, our result is sufficient to apply Theorem \ref{T:main} and prove the conjecture. Part (a) of Lemma \ref{lem:F(T)-properties} argues that automorphisms of ``large'' branches almost surely do not contribute to the moments of $\log\Autabs{T_n}$.

	Recently, Stufler and Wagner \cite{Wagner-talk}  have also announced progress in showing that the distributions of $\Autabs{T}$ and $\Autrabs{T}$ are asymptotically lognormal; however, 
	 it has not yet appear in any published or arXiv paper. Their method is based on the analysis of the  generating function and is  different from our approach.  Stufler and Wagner  gave  much more accurate values for the mean and variance in their talk  \cite{Wagner-talk}, specifically $\Exp{\log\Autabs{T}} \approx 0.052290n$ and $\Vari{\log\Autabs{T}} = 0.039498n$.
\end{remark}

\section{Tools from the theory of martingales}\label{S:m_theory}
%In this section, we recall the definition of a martingale  and state general results about its limiting distribution and concentration properties. 

 Let   $\mathcal{P}=(\varOmega,\calF, \Pr)$ be a {probability} space.
A sequence $\calF_0,\ldots,\calF_n$ of {sub-$\sigma$-fields}
of $\calF$ is a  {\textit{filtration}} if $\calF_0\subseteq\cdots\subseteq\calF_n$.
%If $X$ is a random variable on  $\mathcal{P}$, we use the following 
%notation $\E_j X = \E (X \st \calF_j)$.
A sequence $Y_0,\ldots,Y_n$ of random variables on  $\mathcal{P}$
is a \textit{martingale with respect to $\calF_0,\ldots,\calF_n$} if
\begin{itemize}\itemsep=0pt
\item[(i)] $Y_i$ is $\calF_i$-measurable and {$\left| Y_i \right|$} has finite expectation, for $0\le i\le n$;
\item[(ii)] $\Exp{ Y_i  \mid \calF_i} = Y_{i-1}$ for $1\le i\le n$.
\end{itemize}
In the  following we will always assume that $\calF_0 = \{\emptyset, \varOmega \}$ and so 
$Y_0 = \E [ Y_n]$.

In this section we state some general results  on concentration and limiting distribution for martingales. 
In fact, we only need these results for discrete uniform  probability spaces, where  the concept of martingale
reduces to  average values over increasing set systems. 
In this case, $\varOmega$ is a finite set and
each $\sigma$-field $\calF_{i}$ is  generated by unions of blocks of a partion of $\varOmega$.
Following McDiarmid  \cite{McDiarmid}, for $i=0,\ldots, n$ 
we define the \emph{conditional range} of a random variable $X$ on $\mathcal{P}$ as
\begin{equation}\label{con-range} 
  \ran [X \mid \calF_{i}] := 
  \sup [X\mid \calF_{i}] +
  \sup[- X\mid \calF_{i}].
\end{equation}
Here,  $\sup[X\mid \calF_{i}] $ is the $\calF_{i}$-measurable random variable which  takes the 
value  at $\omega \in \varOmega$ equal to the maximum value of $X$ over the block of $\calF_{i}$ containing $\omega$ (and similarly for $-X$).  More generally,  ``supremum'' can be replaced by ``essential supremum''.   For more information about conditional range and diameter,  see, for example, \cite[Section 2.1]{mother} and references therein. 
 We will use that 
the conditional range is a seminorm and, in particular, it is subadditive. 
%When $\calF_0 = \{\emptyset, \varOmega \}$ we also write $\ran  X = \ran_0 X$.

Our first tool is the following result of McDiarmid \cite{McDiarmid}. Further in this section, the notation 
$\ran_i [\cdot]$ stands for $\ran [\cdot \mid \calF_{i}]$.
\begin{thm} \emph{(\cite[Theorem 3.14]{McDiarmid})} \label{T:concentration}
%Suppose that $\mathcal{P}=(\varOmega,\calF,\mathbb{P})$ is a finite probability space. 
Let $Y_0,Y_1,\ldots, Y_n$ be a real-valued martingale  with respect to the filtration $\{\emptyset,\varOmega\}=\calF_0,\calF_1\ldots,\calF_n$. 
Denote  
\[
	R^2 :=   \sum_{i=1}^n \left( \operatorname{ran}_{i-1} [Y_i] \right)^2. 
\]	
 %Assume
%\[\sum_{i=1}^n \left( \operatorname{ran}_{i-1}Y_i \right)^2
 %\leq \hat{r}^2 
%\]
%for some $\hat{r}>0$.
 Then, for any $r,t > 0$  %\ac{What is $\hat{r}$ compared to just $r$ here? Should they just be $r$?},
\[ \Pr\left(|Y_n - Y_0| \geq t \right) 
\leq 2\exp( -2t^2/r^2) + 2 \Pr \left( R^2 > r^2 \right).
\]
\end{thm}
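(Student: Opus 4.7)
The plan is to reduce the claim to the standard Azuma--Hoeffding bound for martingales with deterministically bounded sum of squared conditional ranges, by truncating $Y$ at the first time the running sum $S_k := \sum_{i=1}^k \ran_{i-1}[Y_i]^2$ exceeds $r^2$. Since $Y_{i-1}$ is $\calF_{i-1}$-measurable we have $\ran_{i-1}[Y_i] = \ran_{i-1}[Y_i - Y_{i-1}]$, so each $S_k$ is $\calF_{k-1}$-measurable; hence $\tau := \min\{k \geq 1 \st S_k > r^2\}$ (with the convention $\min \emptyset := n+1$) is a stopping time with respect to the filtration $(\calF_{k-1})_{k\geq 1}$. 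I would then define the stopped process $\tilde Y_k := Y_{(\tau - 1) \wedge k}$, which is a martingale by optional stopping and whose increments $\tilde Y_k - \tilde Y_{k-1} = (Y_k - Y_{k-1})\one_{\{\tau > k\}}$ vanish once the running range sum crosses $r^2$. By construction $\sum_{k=1}^n \ran_{k-1}[\tilde Y_k - \tilde Y_{k-1}]^2 \leq r^2$ almost surely.

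Next, I would apply the conditional Hoeffding lemma to the bounded, mean-zero increment $D_k := \tilde Y_k - \tilde Y_{k-1}$ to obtain $\E[e^{\lambda D_k} \mid \calF_{k-1}] \leq \exp(\lambda^2 c_k^2 / 8)$ with $c_k := \ran_{k-1}[D_k]$, and iterate via the tower property to get $\E[e^{\lambda(\tilde Y_n - \tilde Y_0)}] \leq \exp(\lambda^2 r^2 / 8)$. A Chernoff bound with optimal $\lambda = 4t/r^2$ then yields the one-sided estimate $\Pr(\tilde Y_n - \tilde Y_0 \geq t) \leq \exp(-2t^2/r^2)$, and symmetrically $\Pr(\tilde Y_0 - \tilde Y_n \geq t) \leq \exp(-2t^2/r^2)$.

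Finally, I would observe that $\{R^2 \leq r^2\} = \{\tau > n\} \subseteq \{\tilde Y_n = Y_n\}$, so $\{Y_n - Y_0 \geq t\} \subseteq \{\tilde Y_n - \tilde Y_0 \geq t\} \cup \{R^2 > r^2\}$ and similarly for the reverse direction; a union bound then yields $\Pr(|Y_n - Y_0| \geq t) \leq 2 \exp(-2t^2/r^2) + 2\Pr(R^2 > r^2)$, the factor of $2$ in front of the second term arising from applying the one-sided estimates separately. The only technical subtlety I expect is verifying the conditional Hoeffding lemma for the essential-supremum formulation of $\ran$ used in \eqref{con-range}; this is routine via regular conditional distributions, and in the paper's intended discrete-uniform setting it simply reduces to applying the classical Hoeffding lemma to each block of the partition generating $\calF_{k-1}$.
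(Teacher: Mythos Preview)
Your argument is correct and is essentially the standard proof (stopping-time truncation to force a deterministic bound on the sum of squared ranges, followed by Azuma--Hoeffding via the conditional Hoeffding lemma and a union bound). Note, however, that the paper does not actually prove this theorem: it is quoted verbatim from McDiarmid's survey \cite[Theorem~3.14]{McDiarmid} and used as a black box, so there is no ``paper's own proof'' to compare against; your sketch simply reproduces the argument behind the cited result.
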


The normalized quadratic variation of  a martingale sequence $\Y= (Y_0, \ldots, Y_n)$ is defined by
\[
	Q[\Y] :=  \frac{1}{\Vari{Y_n}} \sum_{i=1}^n (Y_i - Y_{i-1})^2. 
\]
Observe that 
\begin{equation}\label{telescope}
	\Exp{ (Y_i - Y_{i-1})^2}  = \Exp{\Var[Y_i \mid F_{i-1}]}=
	\Exp{\E \left[ Y_i^2  - Y_{i-1}^2  \mid F_{i-1}\right]  }=
	\Exp{Y_i^2 -  Y_{i-1}^2}.
\end{equation}
Thus, 
\[
\E Q[\Y] =     \frac{1}{\Vari{Y_n}}  \sum_{i=1}^n  \left(\Exp{Y_i^2} - \Exp{Y_{i-1}^2}\right)= 1.
\] 
A classical result by Brown \cite{Brown1971} states that if the  
increments $Y_{i}-Y_{i-1}$ have finite variances,
$
	Q[\Y] \xrightarrow{prob.} 1
$
 as $n \rightarrow \infty$
and a certain Lindeberg-type condition is satisfied then  the limiting distribution of $Y_n$  is  normal, i.e. 
$\delta_{\rm K}[Y_n] \rightarrow 0$.
 For a more restricted class of martingales with bounded  differences
 these conditions can be slightly simplified and will be sufficient for our purposes. Our second tool 
 is  the following result of Mourrat \cite{Mourrat2013}  which gives an 
  explicit  bound on the rate of convergence in the CLT under a 
 strengthened condition that  the normalized quadratic variation $Q[\Y]$ converges  to $1$  in $L^p$.
\begin{thm}
\emph{(\cite[Theorem 1.5.]{Mourrat2013})} \label{T:distr}
	Let $p \in [1,+\infty)$ and $\gamma \in (0,+\infty)$. There exists a constant 
	$C_{p,\gamma}>0$ such that, for any real 
	martingale sequence $\Y= (Y_0, \ldots, Y_n)$ 
	satisfying $|Y_i - Y_{i-1}| \leq \gamma$ for all $i=1,\ldots, n$,
	\[
	 \delta_{\rm K} [Y_n]
	 \leq  C_{p,\gamma} \left(
	  \frac{n \log n}{(\Vari{Y_n})^{3/2}} +   \left( \Exp{|Q[\Y] -1|^p}  + (\Vari{ Y_n})^{-p}\right)^{1/(2p+1)}
	 \right).
	\]
\end{thm}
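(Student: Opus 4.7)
The plan is to combine Lindeberg's swapping method with Esseen's smoothing inequality. First I would normalise: let $\sigma^2 := \Vari{Y_n}$ and $X_i := (Y_i - Y_{i-1})/\sigma$, so that $S := \sum_{i=1}^n X_i$ has mean zero and unit variance, the conditional variances $V_i := \E[X_i^2 \mid \calF_{i-1}]$ satisfy $\sum_{i=1}^n V_i = Q[\Y]$, and $|X_i| \leq \gamma/\sigma$. The goal becomes bounding the Kolmogorov distance between the law of $S$ and that of a standard Gaussian $Z$.

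Esseen's smoothing inequality reduces this to bounding $|\E f(S) - \E f(Z)|$ for a one-parameter family of smooth test functions $f = f_{t,h}$ depending on a bandwidth $h>0$, plus a deterministic smoothing error of order $h^{-1}$. The smooth comparison I would handle by Lindeberg swapping: on an enlarged probability space, introduce conditionally Gaussian variables $G_i$ with $G_i \mid \calF_{i-1} \sim N(0, V_i)$, and compare $S$ with $G := \sum_i G_i$ by replacing the $X_i$'s by the $G_i$'s one at a time. A third-order Taylor expansion, using $\E[X_i \mid \calF_{i-1}] = \E[G_i \mid \calF_{i-1}] = 0$ and the matched conditional second moments, leaves a remainder controlled by $\|f'''\|_\infty \sum_i \E\bigl[|X_i|^3 + |G_i|^3\bigr]$, which is of order $n \gamma \sigma^{-3} \|f'''\|_\infty$ since $\E\sum V_i = 1$ and $|X_i| \leq \gamma/\sigma$.

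It then remains to compare $G$ with the standard Gaussian $Z$. Conditional on $\calF_n$, the variable $G$ has the law $N(0, Q[\Y])$, so via a coupling of Gaussians one takes $G = \sqrt{Q[\Y]}\, Z$; this yields $|\E f(G) - \E f(Z)| = O\bigl(\|f''\|_\infty \, \E|Q[\Y] - 1|\bigr)$, with $L^p$ refinements under stronger smoothness assumptions on $f$. Optimising the bandwidth $h$ against the smoothing correction $h^{-1}$, the third-order contribution $n\gamma \sigma^{-3} h^{-3}$, and the quadratic-variation error $\|Q[\Y]-1\|_{L^p} h^{-2}$ then produces the stated rate: the logarithmic factor in the first summand is the classical Esseen-smoothing loss in one dimension, while the exponent $1/(2p+1)$ in the second summand arises from the $h$-versus-$p$ optimisation combined with truncating the event $\{|Q[\Y] - 1| > \tfrac12\}$ via Markov's inequality on $\|Q[\Y]-1\|_{L^p}$.

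The hard part will be the emergence of the deterministic $(\Vari{Y_n})^{-p}$ term in the bound, which is needed because a naive smooth comparison breaks down when $Q[\Y]$ is unusually small: then $G$ can be concentrated near zero and the smoothing error cannot be absorbed into the quadratic-variation estimate. Handling this carefully — typically by splitting the expectation over the event where $Q[\Y]$ is bounded away from $0$ and $\infty$ and its complementary tail, and paying $(\Vari{Y_n})^{-p}$ on the latter via the deterministic bound $|Y_n - \E Y_n| \leq n\gamma$ together with Markov's inequality in $L^{2p}$ — is the delicate step that fixes the exact exponent $1/(2p+1)$ and the coupling between $p$ and the variance correction.
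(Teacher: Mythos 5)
This theorem is not proved in the paper; it is imported verbatim as a black box from Mourrat \cite{Mourrat2013}, so there is no ``paper's own proof'' to compare against. Any substantive evaluation would have to be against Mourrat's original argument, which the authors of the present paper rely on without reproducing.

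That said, your sketch is in the same spirit as the Bolthausen-style method that Mourrat refines: Gaussian replacement of martingale increments by conditionally normal $G_i$ with matched conditional variances, a Taylor comparison for smooth test functions, a deterministic Gaussian--Gaussian comparison controlled by $\E\lvert Q[\Y]-1\rvert^p$, and a smoothing step that accounts for the $\log$ factor. Two points are off in the details. First, the third-moment Lindeberg error is $\sum_i \E\lvert X_i\rvert^3 \le (\gamma/\sigma)\sum_i \E X_i^2 = \gamma/\sigma$, not $n\gamma\sigma^{-3}$ as you wrote; this is not fatal because $\gamma/\sigma \le n\gamma^3/\sigma^3$ when $\sigma^2 \le n\gamma^2$, so the stated $n\log n/\sigma^3$ term remains a valid (weaker) upper bound, but your derivation as written conflates the two. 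Second, the role of the $(\Vari{Y_n})^{-p}$ safety term is only gestured at; in Mourrat's argument it enters as a deterministic floor that makes the bound harmless when the variance is too small for the smoothing-versus-tail optimisation to be useful, and nailing down the precise exponent $1/(2p+1)$ requires the explicit bandwidth optimisation that you have only named. To verify the constant structure and the exact form of the bound you should consult \cite[Theorem 1.5 and Section 2]{Mourrat2013} directly.
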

One way to bound the  term  $\Exp{ |Q[\Y] -1|^p}$  in the above is by applying 
Theorem \ref{T:concentration} to the martingale for $Q[\Y]$ with respect to the same filtration, as{which gives the following lemma}.
\begin{lemma}\label{l:CLTbounds}
Let $Y_0,\ldots, Y_n$ be a real-valued martingale  with respect to the filtration $\{\emptyset,\varOmega\}=\calF_0,\ldots,\calF_n$. 
For $\hat{q}>0$, let $\calA_{\hat{q}}$ denote the event 
 \[
  \sum_{i=1}^n  
     \left(\operatorname{ran}_{i-1}  \left[\Vari{ Y_n  \mid \calF_{i}}\right] + (\ran_{i-1} [Y_i] )^2
  \right)^2
   > \left( \hat{q}\,\Vari{Y_n}\right)^2.
 \]
 Then, for any  $p \in [1,+\infty)$, we have 
\[ 
 \Exp{|Q[\Y] -1|^p} \leq  c_p \,\hat{q}^{p} 
 + 2 \Pr \left(\calA_{\hat{q}}\right)   \sup |Q[\Y] -1|^p,
\]
where $  c_p = 2p \int_{0}^{+\infty} e^{-2x^2} x^{p-1} dx$.
\end{lemma}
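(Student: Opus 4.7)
The plan is to apply \cref{T:concentration} to the Doob martingale for $Q[\Y]$, then integrate the resulting tail bound.

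First I would introduce the martingale $Z_i := \E[Q[\Y] \mid \calF_i]$, so $Z_0 = 1$ and $Z_n = Q[\Y]$. The key identity is a clean decomposition of $Z_i$: for $j \leq i$ the term $(Y_j-Y_{j-1})^2$ is $\calF_i$-measurable, and for $j > i$ the tower property together with \eqref{telescope} gives
\[
\sum_{j=i+1}^n \E\!\left[(Y_j-Y_{j-1})^2 \mid \calF_i\right] = \E[Y_n^2 \mid \calF_i] - Y_i^2 = \Vari{Y_n \mid \calF_i},
\]
since $Y_i = \E[Y_n\mid \calF_i]$. Therefore
\[
\Vari{Y_n}\, Z_i \;=\; \sum_{j=1}^i (Y_j-Y_{j-1})^2 + \Vari{Y_n \mid \calF_i},
\]
and subtracting the analogous expression at $i-1$ yields
\[
\Vari{Y_n}(Z_i - Z_{i-1}) = (Y_i-Y_{i-1})^2 + \Vari{Y_n \mid \calF_i} - \Vari{Y_n \mid \calF_{i-1}}.
\]

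Next I would bound $\ran_{i-1}[Z_i]$. The last term is $\calF_{i-1}$-measurable and contributes nothing to $\ran_{i-1}$. By subadditivity,
\[
\Vari{Y_n}\,\ran_{i-1}[Z_i] \;\leq\; \ran_{i-1}\!\left[(Y_i-Y_{i-1})^2\right] + \ran_{i-1}\!\left[\Vari{Y_n\mid\calF_i}\right].
\]
The main observation is that $\ran_{i-1}[(Y_i-Y_{i-1})^2] \leq (\ran_{i-1}[Y_i])^2$: conditional on $\calF_{i-1}$, the variable $Y_i - Y_{i-1}$ has mean zero, hence its conditional essential range is an interval $[a,b]$ containing $0$ with $b-a = \ran_{i-1}[Y_i]=:R$; both $|a|,|b|\leq R$, so the square takes values in $[0, R^2]$. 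Combining, on the complement of $\calA_{\hat q}$ we obtain
\[
\sum_{i=1}^n (\ran_{i-1}[Z_i])^2 \;\leq\; \frac{1}{(\Vari{Y_n})^2} \sum_{i=1}^n \left(\ran_{i-1}[\Vari{Y_n\mid\calF_i}] + (\ran_{i-1}[Y_i])^2\right)^2 \;\leq\; \hat q^{\,2}.
\]

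Finally I would apply \cref{T:concentration} to $\Y' = (Z_0,\dots,Z_n)$ with threshold $r = \hat q$, giving, for every $t>0$,
\[
\Pr\!\left(|Q[\Y]-1| \geq t\right) \;\leq\; 2 e^{-2t^2/\hat q^{2}} + 2\Pr(\calA_{\hat q}).
\]
Using $\E|Q[\Y]-1|^p = \int_0^{M} p t^{p-1}\Pr(|Q[\Y]-1|\geq t)\,dt$ with $M := \sup|Q[\Y]-1|$ and splitting via the minimum with $1$,
\[
\E|Q[\Y]-1|^p \;\leq\; 2p\int_0^\infty t^{p-1} e^{-2t^2/\hat q^2}\,dt \;+\; 2\Pr(\calA_{\hat q}) M^p,
\]
and the substitution $t = \hat q x$ evaluates the first integral to $c_p\,\hat q^{\,p}$, which is exactly the stated bound.

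The only subtle step is the squared-range estimate $\ran_{i-1}[(Y_i-Y_{i-1})^2] \leq (\ran_{i-1}[Y_i])^2$; without the martingale property $\E[Y_i-Y_{i-1}\mid\calF_{i-1}]=0$ the bound would lose a factor, and this is the place where the conditional mean-zero property of the increments is essential. Everything else is bookkeeping with conditional ranges plus the standard tail-integration formula.
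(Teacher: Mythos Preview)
Your proof is correct and follows essentially the same route as the paper: you form the Doob martingale $Z_i=\E[Q[\Y]\mid\calF_i]$, use the telescoping identity \eqref{telescope} to write $\Vari{Y_n}\,Z_i=\sum_{j\le i}(Y_j-Y_{j-1})^2+\Vari{Y_n\mid\calF_i}$, bound $\ran_{i-1}[Z_i]$ via subadditivity and the square-range estimate, and then apply \cref{T:concentration} followed by tail integration. Your justification of $\ran_{i-1}[(Y_i-Y_{i-1})^2]\le(\ran_{i-1}[Y_i])^2$ via the mean-zero property is in fact more explicit than the paper's, which simply invokes $|Y_i-Y_{i-1}|\le\ran_{i-1}[Y_i]$ ``by definition''.
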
 
\begin{proof}
  By definition, we have that $|Y_i - Y_{i-1}| \leq \ran_{i-1} [Y_i]$ for all $i \in [n]$. Therefore,
    \[
    	 \ran_{i-1} \left[(Y_i - Y_{i-1})^2\right] \leq (\ran_{i-1} [Y_i])^2.
    \]
Observe also $\ran_{i-1}  \left[ (Y_j - Y_{j-1})^2\right] = 0$ for any $j<i$.	 Then, using \eqref{telescope} and  the subadditivity of the conditional range, we get that  
	\begin{align*}
		\ran_{i-1} \left[ \Exp{Q[\Y]  \mid \calF_i } \right] & = 
		\frac{1}{\Vari{Y_n}}
		\ran_{i-1}
		\left[ \sum_{j=i}^{n} \Exp {(Y_{j}- Y_{j-1})^2 \mid \calF_i }\right]\\
		&=  \frac{\ran_{i-1}
		\left[ \Vari {Y_n \mid \calF_i} + (Y_i - Y_{i-1})^2 
		\right] }{\Vari {Y_n}}
				\\
		&
		 \leq 	  \frac{ \ran_{i-1} \left[\Vari{Y_n \mid \calF_i} \right ]+ \left(\ran_{i-1}[Y_i]\right)^2 }{\Vari {Y_n}} .
	\end{align*}
	Applying  Theorem \ref{T:concentration} to the martingale  
	$\{\Exp{Q[\Y]  \mid \calF_i }\}_{i=0,\ldots, n}$, we find that
	\[
		\Pr \left(|Q[\Y] -1| \geq t\right) \leq 2 \exp (-2t^2 /\hat{q}^2) + 2  \Pr \left(\calA_{\hat{q}}\right).
	\]
      Substituting this bound into 
      \[
      	     \Exp {|Q(\Y) -1|^p} = \int_{0}^{ t_{\max}}  \Pr \left(|Q(\Y) -1| \geq t\right) p t^{p-1}dt
      \]
      and changing the variable $t = \hat{q} x$, we complete the proof.  Here, $t_{\max}= \sup |Q(\Y) -1|$.
\end{proof}

Using the formulas for $\Exp{ (Y_j-Y_{j-1})^2  \mid \calF_{i}} $ similar to  \eqref{telescope}, we find that
\begin{equation}\label{telescope2}
	\Vari{ Y_n  \mid \calF_{i}}   = \sum_{j=i+1}^n \Exp { (Y_j-Y_{j-1})^2  \mid \calF_{i}}.
\end{equation}
Then, by the subadditivity of the conditional range, we get 
the next bound, which 
 will be useful in applying Lemma \ref{l:CLTbounds}.
\begin{equation}\label{eq:CLTbounds}
	\operatorname{ran}_{i-1}  \left[\Vari{ Y_n  \mid \calF_{i})}\right] 
	\leq \sum_{j=i+1}^n \ran_{i-1} \Exp{(Y_j - Y_{j-1})^2 \mid \calF_i}.
\end{equation}

The Doob martingale construction is another important tool in our argument. Suppose $\X = (X_1,\ldots,X_n)$
is a random vector on $\mathcal{P}$ taking values  in $S$
 and $f:S\rightarrow \Reals$ is such that $f(\X)$ has bounded expectation. Consider the filtration
 $\calF_0,
\ldots \calF_n$ defined by $\calF_i = \sigma(X_1,\ldots,X_i)$ which is the $\sigma$-field generated by random variables 
$X_1,\ldots X_i$. Then, the Doob martingale $\Y^{\rm Doob} = \Y^{\rm Doob} (f,\X)$ is defined by, for all $i=0,\ldots,n$,  
\begin{equation*}%\label{def_Doob}
	Y_i^{\rm Doob}  := \Exp{f(X_1,\ldots,X_n) \mid  \calF_i}.
\end{equation*}
In case  of finite $S$,  the random variables
$Y_i^{\rm Doob} $, 
$\Vari{Y_n^{\rm Doob}  \mid  \calF_j} $
 and $\ran_{i} [Y_n^{\rm Doob}] $ can be seen as functions 
 $f_i, v_i, r_i : S \rightarrow \Reals$
 of the random vector $\X$ defined as follows: for  $\xvec \in S$,  
\begin{equation}\label{var-as-func}
\begin{aligned}
	f_i(\xvec) &:= \Exp {f(\X) \mid X_1 = x_1, \ldots, X_i=x_i} =\Exp {f(x_1,\ldots,x_i, X_{i+1} \ldots, X_n)},\\
	v_i(\xvec) &:= \Vari {f(\X) \mid X_1 = x_1, \ldots, X_i=x_i}=  \Vari {f(x_1,\ldots,x_i, X_{i+1} \ldots, X_n)},\\
	r_{i}(\xvec) &:= \ran\left[f(\X) \mid X_1 = x_1, \ldots, X_i=x_i\right] 
	\\
	 &\phantom{:}=\max_{\yvec} f(x_1,\ldots,x_i, y_{i+1} \ldots, y_n) 
	     -\min_{\yvec}  f(x_1,\ldots,x_i, y_{i+1} \ldots, y_n),
\end{aligned}
\end{equation}
where $x_1,\ldots,x_i$ are fixed and $X_{i+1},\ldots, X_n$ are random and both $\max$ and $\min$ are 
over $\yvec \in S$ such that $y_j=x_j$ for $j=1,\ldots,i$.
If, in addition, random variables $X_1, \ldots, X_n$ are independent then
		\begin{equation}\label{eq:bound_range}
			|Y_i^{\rm Doob}  - Y_{i-1}^{\rm Doob} | \leq \ran_{i-1} \left[Y_i^{\rm Doob}\right]   \leq \max_{\xvec,\xvec'} |f(\xvec) - f(\xvec')|,
		\end{equation}
		where the maximum is over $\xvec, \xvec' \in S$ that differ only in the $i$-th coordinate.

In particular, the Doob martingale process is  applicable for functions of random permutations since we can represent them as vectors. 
Let $S_n$ be the set of permutations of $[n]$. We write $\omega =(\omega_1,\ldots,\omega_n) \in S_n$ 
if $\omega$ maps $j$ to $\omega_j$. The product of two permutuations  $\omega,\sigma \in S_n$ is defined by
\[
	\omega \circ \sigma := (\omega_{\sigma_1}, \ldots, \omega_{\sigma_n})
\]
which corresponds to {the composition of $\omega$ and $\sigma$} if we treat them as functions on $[n]$.  
% Let $\X = (X_1,\ldots,X_n)$  be a uniform random element of the set $S_n$ of permutations of $[n]$. 
% For a given permutation $\omega = (\omega_1,\ldots,\omega_n)$  we have
  %   \begin{equation*}
    %    f_k(\omega) = \E(f(\X) \mid X_j = \omega_j,\ 1 \leq j \leq k).
    % \end{equation*}
  % Note that $f_n(\omega) = f_{n-1}(\omega)=f(\omega)$ since first $n-1$ components 
   % determine uniquely $\omega$.    
     % Thus, the sequence 
   % \begin{equation}\label{Doob_perm}
     % \E f(\X) = Y_0(\X), Y_1(\X), \ldots, Y_{n-1}(\X)=f(\X)
   % \end{equation}
% forms a  martingale for $f(\X)$ with respect to the filter $\calF_0,\ldots,\calF_{n-1}$, where
%for each $k$, the $\sigma$-field $\calF_k$ is generated by sets 
%\[
%	\varOmega_{k,\nu} = \{\omega \in S_n \st \omega_j = \nu_j, \ 1\leq j\leq k\}
%\]
%  for all $k$-tuples $(\nu_1,\ldots,\nu_k)$   with distinct components.
%
%     
   For a function $f: S_n \rightarrow \Reals$ and $1\leq i\neq j \leq n-1$,  define 
  \begin{align*}
    \alpha_i [f] &:=  \sum_{a=i+1}^n \frac{\max_{\omega \in S_n} |f(\omega) - f(\omega \circ (ia))|}{n-i}, \\
        \varDelta_{ij} [f] &:=  \sum_{a=i+1}^n \sum_{b=j+1}^n \frac{
        \max_{\omega \in S_n} |f(\omega) - f(\omega \circ (ia)) - f(\omega \circ (jb)) + f(\omega \circ (jb)\circ(ia))|}{(n-i)(n-j)}.
  \end{align*}
   Let $\X = (X_1,\ldots,X_n)$  be a uniform random element of $S_n$  and $\Y^{\rm Doob}(f, \X)$ be the Doob martingale sequence for $f(\X)$. Note that  $Y^{\rm Doob}_n = Y^{\rm Doob}_{n-1} = f(\X)$ since the first $n-1$ coordinates $X_i$ determine the permutation  $\X$ uniquely.
   \begin{lemma}\label{Lemma_perm} If   $Y^{\rm Doob} = Y^{\rm Doob}(f,\X)$ where 
   $f: S_n \rightarrow \Reals$ and $\X$ is a uniform random element of $S_n$,  then
		\begin{itemize}
		\item[(a)]		$\displaystyle |Y^{\rm Doob}_i - Y^{\rm Doob}_{i-1}|\leq 	\ran_{i-1} 
		\left[ Y^{\rm Doob}_i \right] \leq \alpha_i[f],$ for  all $1\leq i \leq n-1$.
  \item[(b)]   $ \displaystyle	\ran_{i-1} \left[ \Exp{(Y^{\rm Doob}_j-Y^{\rm Doob}_{j-1})^2  \mid \calF_i} \right]\leq 2 \alpha_j[f] \varDelta_{ij}[f]$, 
    for all $1\leq i< j \leq n-1$.
		\end{itemize}
   \end{lemma}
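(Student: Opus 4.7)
For part (a), the inequality $|Y^{\rm Doob}_i - Y^{\rm Doob}_{i-1}| \leq \ran_{i-1}[Y^{\rm Doob}_i]$ is immediate since $Y^{\rm Doob}_{i-1}$ is the conditional mean of $Y^{\rm Doob}_i$ given $\calF_{i-1}$ and hence lies in the convex hull of its possible values on any block of $\calF_{i-1}$. For the bound $\ran_{i-1}[Y^{\rm Doob}_i]\leq \alpha_i[f]$, I would fix a prefix $(x_1,\ldots,x_{i-1})$ and compare $f_i(x_1,\ldots,x_{i-1},x_i)$ at two admissible values $x_i \neq x_i'$. Expressing each as the uniform average of $f(\omega)$ over full permutations $\omega \in S_n$ extending the prefix, I would use the bijection sending each such $\omega$ (with $\omega_i = x_i$) to $\omega\circ(i,a_\omega)$, where $a_\omega > i$ is the unique position with $\omega_{a_\omega} = x_i'$. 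Splitting the resulting sum by $a_\omega = a$ bounds each piece by $\max_\omega|f(\omega)-f(\omega\circ(i,a))|$ weighted by $(n-i-1)!/(n-i)! = 1/(n-i)$; summing over $a>i$ recovers exactly $\alpha_i[f]$.

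For part (b), I would treat $\phi_j := Y^{\rm Doob}_j - Y^{\rm Doob}_{j-1}$ as a function on $S_n$ depending only on $\X|_{[j]}$, so that $\Exp{\phi_j^2\mid\calF_i}$ is the $i$-th Doob evaluation of $\phi_j^2$. Applying part (a) to $\phi_j^2$, factoring $u^2-v^2 = (u-v)(u+v)$, and using the uniform bound $|\phi_j|\leq \alpha_j[f]$ coming from part (a) gives
\[
\ran_{i-1}\!\left[\Exp{\phi_j^2\mid\calF_i}\right] \;\leq\; 2\alpha_j[f]\sum_{a>i}\frac{\max_\omega\!\left|\phi_j(\omega)-\phi_j(\omega\circ(i,a))\right|}{n-i}.
\]
Summing the inner maxima will reconstruct $\varDelta_{ij}[f]$, provided I can show, for each $a>i$,
\[
\max_\omega\!\left|\phi_j(\omega)-\phi_j(\omega\circ(i,a))\right|\;\leq\;\sum_{b>j}\frac{\max_{\tau\in S_n}\!\left|f(\tau)-f(\tau\circ(i,a))-f(\tau\circ(j,b))+f(\tau\circ(j,b)(i,a))\right|}{n-j}.
\]

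To obtain this remaining inequality, I would represent $\phi_j(\omega)-\phi_j(\omega\circ(i,a))$ as the $j$-th Doob increment $\Phi^{H}_j(\omega)$ of an auxiliary function $H$ on $S_n$ and then apply part (a) to $H$. When $a\leq j-1$ the positional swap $(i,a)$ bijects $\{\tau:\tau|_{[k]}=\omega|_{[k]}\}$ with $\{\tau:\tau|_{[k]}=(\omega\circ(i,a))|_{[k]}\}$ for both $k=j$ and $k=j-1$, so the natural choice $H(\tau):=f(\tau)-f(\tau\circ(i,a))$ produces the identity directly, and $\alpha_j[H]$ unfolds to exactly the displayed sum of second differences. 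When $a\geq j$ the positional swap no longer preserves the conditioning sets; I would instead use the value-swap $\pi$ interchanging $\omega_i$ and $\omega_a$, which does biject the conditioning sets (since $\tau_i=\omega_i$ under the conditioning) and satisfies $\pi\circ\tau=\tau\circ(i,B_\tau(\omega_a))$ for the random position $B_\tau(\omega_a)\geq j$, so the choice $H(\tau):=f(\tau)-f(\pi\circ\tau)$ yields the same type of Doob identity. The main technical obstacle is the case $a\geq j$: one must carefully partition the sum defining $\alpha_j[H]$ by the value of $B_\tau(\omega_a)$ and verify that the resulting maxima telescope into the second-difference terms on the right-hand side.
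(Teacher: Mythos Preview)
The paper does not give a self-contained argument: after noting the first inequality in~(a), it defers both remaining bounds to \cite[Lemma~2.1]{sister}. Your plan is therefore a plausible reconstruction of what that external lemma must contain, and for part~(a) and for part~(b) in the regime $i<a\leq j-1$ your argument is correct and is the standard one.

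The gap is precisely where you flag it. For $a\geq j$ your auxiliary function $H(\tau)=f(\tau)-f(\pi\circ\tau)$ satisfies $\pi\circ\tau=\tau\circ(i,A_\tau)$ with $A_\tau=\tau^{-1}(\omega_a)$, so the increments $H(\tau)-H(\tau\circ(j,b))$ that enter $\alpha_j[H]$ are second differences of $f$ involving the transposition $(i,A_\tau)$ rather than $(i,a)$. Since $A_\tau$ varies with~$\tau$, you cannot bound these by $D(a,b)$ for the \emph{fixed}~$a$ you started with, which is what your displayed term-by-term inequality demands. The degenerate subcase $A_\tau=b$ (i.e.\ $\tau_b=\omega_a$) is worse: then the position of $\omega_a$ in $\tau\circ(j,b)$ jumps to~$j$, producing the four-point expression
\[
f(\tau)-f(\tau\circ(i,b))-f(\tau\circ(j,b))+f(\tau\circ(j,b)\circ(i,j)),
\]
whose two ``$(i,\cdot)$'' slots carry different partners $b$ and~$j$; this matches no single term $D(a',b')$ in $\varDelta_{ij}[f]$. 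Your proposed fix, ``partition the sum defining $\alpha_j[H]$ by $B_\tau$ and telescope'', cannot work as stated because $\alpha_j[H]$ is a sum of \emph{maxima} over~$\tau$, and the mismatched expression above does not telescope against anything available. To close the case $a\geq j$ you need either a coupling that keeps the position~$a$ genuinely fixed, or to abandon the term-by-term target and instead sum over $a$ and average over~$\tau$ \emph{before} passing to maxima, exploiting that for $\tau$ with $\tau|_{[j]}=\omega|_{[j]}$ the map $a\mapsto A_\tau$ is a bijection of $\{j+1,\dots,n\}$.
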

   \begin{proof}
                   To show the first inequality in part (a), we observe that 
     \[ 
     -\mysup(-Y^{\rm Doob}_i \mid \calF_{i-1}) \leq Y^{\rm Doob}_{i-1} \leq \mysup (Y^{\rm Doob}_i \mid \calF_{i-1}),\]
    by definition.  The other bounds is a special case of \cite[Lemma 2.1.]{sister}
      for real-valued random variables,      where  the conditional range is the same as the
conditional diameter.
   \end{proof}

\section{Martingales for  tree parameters}\label{S:Construction}

To prove \cref{T:main} we use the martingale  based on the Aldous-Broder algorithm, which generates a random spanning tree of a given graph $G$.  Here is a quick summary: (1) consider the random walk starting from any vertex; (2) every time we traverse an edge which takes us to a vertex we have not yet explored, add this edge to the tree; (3) stop when we visited all  vertices.  The resulting random graph has uniform distribution  over the set of spanning trees of $G$,  for more details see \cite{Aldous1990}. If  $G$ is the complete graph $K_n$, $n\geq 2$, this construction can be rephrased as the following two-stage procedure  \cite[Algorithm 2]{Aldous1990}: 
\begin{itemize}
	\item[I.] For $1\leq i \leq n-1$ connect vertex $i+1$ to vertex $V_{i} = \min\{i, U_{i}\}$, 
	where $\U = (U_1,\ldots, U_{n-1})$ is uniformly distributed on $[n]^{n-1}$.
	\item[II.] Relabel vertices $1,\ldots,n$ as $X_1,\ldots,X_n$, where $\X = (X_1,\ldots,X_n)$
	 is a uniform random permutation from $S_n$.
\end{itemize}
 Let  $T(\uvec)$ is the tree produced  at stage I given that  $\U= \uvec$.  
 For a permutation 
 $\omega\in S_n$ and a tree $T\in \Tset_n$,  let
  \begin{equation*}
 \text{$T^{\omega}:=$  the tree obtained from $T$ by relabelling according
 to  $\omega$.}
 \end{equation*}
  From \cite{Aldous1990} we know that $T(\U)^{\X}$ has uniform distribution on the set $\Tset_n$.
  Now,  a tree parameter $F: \Tset_n \rightarrow \Reals $ can be seen as a function with domain $[n]^{n-1} \times S_n$. 
  % Essentially, the random vector $\U$ determines the structure of the tree. 
Consider the functions $\hat{F}: \Tset_n \rightarrow \Reals $ and
 $F_T: S_n \rightarrow \Reals$   defined by
 \begin{equation}\label{def_FF}
 	 \hat{F}(T) :=  \Exp{F(T^{\X})}, \qquad F_T(\omega) :=  F(T^{\omega}). 
 \end{equation}
Let  $\Y = (Y_0, \ldots, Y_{n-1})$ and  $\Z(T) = (Z_0(T), \ldots, Z_{n-1}(T))$ be  the Doob martingale sequences  
for $ \hat{F}(T(\U))$ and  $F_T(\X)$, respectively: for $i = 0,\ldots ,n-1$,
 \begin{equation}\label{def_YZ}
 	Y_i := \E \big[ \hat{F}(T(\U)) \mid \calF_i \big]
\qquad \text{and} \qquad
Z_i(T) :=  \E \big[ F_T(\X) \mid \calG_i \big],
 \end{equation}
 where   the filtrations are $\calF_i = \sigma(U_1,\ldots,U_i)$ and $\calG_i = \sigma(X_1,\ldots, X_{i})$. 
 We construct  the martingale 
 for $F(\Tran)$  by  combining the above  two sequences together. 
   Further in this section, we will use the following notations for conditional statistics of a random variable $W$ 
 with respect to  $\calF_i$ and $\calG_i$:
 \begin{equation*}
\begin{split}
  \E_{\calF_i} [W] &:=  \Exp{W \mid \calF_i},\\
 	\Var_{\calF_i} [W] &:=  \Vari{W \mid \calF_i}, \\
 	 	\mysup_{\calF_i} [W] &:=  \mysup [W \mid \calF_i],\\
 	 	 	\ran_{\calF_i} [W] &:=  \ran [W \mid \calF_i],
 \end{split}
 \qquad
 \begin{split}
 	\E_{\calG_i} [W] &:=  \E [W \mid \calG_i],\\
 	 \Var_{\calG_i} [W] &:= \Var [W \mid \calG_i],\\
 	 \mysup_{\calG_i} [W] &:=  \mysup [W \mid \calG_i],\\
 	 	 \ran_{\calG_i} [W] &:=  \ran [W \mid \calG_i].
\end{split}
 \end{equation*}
%Combining the above  sequences together, we obtain the martingale sequence  
% $(\Y, \Z(T(\U)))$ for $F(\Tran) = f(\U,\X)$ 
%\begin{equation}\label{big_mart}
%	\E f(\U,\X) = Y_0,  \ldots, Y_{n-1} = Z_0(T(\U)),  \ldots, Z_{n-1}(T(\U))
% =  f(\U, \X)
%\end{equation}
%with respect to the filter
%\[
%  \{\emptyset, \varOmega\}= \calF_0 \subseteq \cdots \subseteq \calF_{n-1} = 
%\calF_{n-1}\times \calG_0  \subseteq \cdots \subseteq \calF_{n-1}\times\calG_{n-1}.
%\] 

 \subsection{Properties of $F_T$  and  $\hat{F}$} \label{S:FF}
First,  we study properties of functions   $F_T$  and $\hat{F}$  from \eqref{def_FF} 
given that  the parameter $F$ is $\alpha$-Lipschitz and $\rho$-superposable.
   \begin{lemma}\label{l:FF}
    Let  a tree parameter $F: \Tset_n\rightarrow \Reals$ be $\alpha$-Lipschitz and  $\rho$-superposable
   for some $\alpha\geq 0$ and $\rho\geq 1$, then
       \begin{itemize}
       			\item[(a)]  $\hat{F}$ is $\alpha$-Lipschitz and $\rho$-superposable.
       			\end{itemize}
              	Furthermore, the following holds for all  trees $T \in \Tset_n$ and permutations $\omega \in S_n$.
              	\begin{itemize}
       			\item[(b)]   		
       			If $(ia)$ is  a transposition from $S_n$, then
       			 	\[
       			    |F_T(\omega) - F_T(\omega \circ (ia))| \leq  \alpha (\deg_T (i)+\deg_T (a)),
       			    \]
       			    where $\deg_T (i)$,  $\deg_T (a)$ are degrees of $i,a$ in the tree $T$.
       			 \item[(c)]
       			    Let $T' = \swap_{q}^{rs} T$ be a tree for some triple $(q,r,s)$.
       			    If    $(ia)$ is  a transposition from $S_n$ that     $d_T(\{i,a\}, \{r,s\}) \geq  \rho+1$, then
       			    \[
       			    	 F_T(\omega) - F_T(\omega \circ (ia) ) -  F_{T'}(\omega)  + 
       			    	 F_{T'}
       			    	 (\omega \circ (ia)) = 0.
       			    \]
       			   \item[(d)] 
       			   If $(ia), (jb)$ are transpositions from $S_n$ such that		    $d_T(\{i,a\}, \{j,b\}) \geq  \rho+2$, then
       			    \[ 
       			      F_T(\omega) - F_T(\omega \circ (ia) ) -  F_T(\omega \circ (jb) )  + F_T(\omega
       			   \circ (jb) \circ (ia)) = 0.
       			   \]

       \end{itemize}
 \end{lemma}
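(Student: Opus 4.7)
The plan is to exploit the identity
\[
    (\swap_q^{rs} T)^\omega = \swap_{\omega(q)}^{\omega(r)\omega(s)}(T^\omega),
\]
together with the isometry $d_{T^\omega}(\omega(u),\omega(v)) = d_T(u,v)$. Both $\hat F$ and $F_T$ inherit their good behaviour from $F$ through this correspondence, once the action of the transposition $(ia)$ on labels is decomposed into a short sequence of elementary tree perturbations.

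Part (a) is a pointwise-and-average argument. For every realization $\omega$ of $\X$, the $\alpha$-Lipschitz hypothesis gives
\[
    |F(T^\omega) - F(\swap_{\omega(i)}^{\omega(j)\omega(k)} T^\omega)| \le \alpha,
\]
and taking expectations yields $|\hat F(T) - \hat F(\swap_i^{jk}T)| \le \alpha$. Superposability of $\hat F$ follows analogously since the relevant distances $d_T(\{j,k\},\{b,c\})$ are preserved under $\omega$.

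For part (b), the trees $T^\omega$ and $T^{\omega \circ (ia)}$ agree on every edge of $T$ not incident to $\{i,a\}$, while each edge $\{i,v\}$ of $T$ with $v \neq a$ appears as $\{\omega(i),\omega(v)\}$ in the first tree and as $\{\omega(a),\omega(v)\}$ in the second (and analogously for edges incident to $a$). Each such discrepancy is repaired by one elementary perturbation $\swap_{\omega(v)}^{\omega(i)\omega(a)}$, giving a total of at most $\deg_T(i) + \deg_T(a)$ perturbations. Ordering them so that the two perturbations at the neighbors of $\omega(i)$ and $\omega(a)$ lying on the unique $\omega(i)$--$\omega(a)$ path are carried out last makes every intermediate graph a tree, and telescoping the $\alpha$-Lipschitz bound yields (b).

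For parts (c) and (d), we reuse this decomposition. Write $T^\omega = U_0 \to U_1 \to \cdots \to U_m = T^{\omega \circ (ia)}$ with each step $U_{p+1} = \swap_{\omega(v_p)}^{\omega(i)\omega(a)} U_p$ anchored at $\{\omega(i),\omega(a)\}$. A key observation is that deleting the vertices $\omega(i),\omega(a)$ from any $U_p$ produces the same forest as in $T^\omega$, so for any $\omega(r) \notin \{\omega(i),\omega(a)\}$ the distance $d_{U_p}(\omega(r), \{\omega(i),\omega(a)\}) = d_T(r,\{i,a\})$ is independent of $p$. In (c), the hypothesis $d_T(\{i,a\},\{r,s\}) \ge \rho + 1$ ensures $\{r,s\} \cap \{i,a\} = \emptyset$ and also $q \notin \{i,a\}$, since $q$ is a neighbor of $r$ in $T$. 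Hence $\sigma := \swap_{\omega(q)}^{\omega(r)\omega(s)}$ is a valid perturbation of every $U_p$, and its anchor $\{\omega(r),\omega(s)\}$ lies at distance at least $\rho + 1 \ge \rho$ from the anchor of each elementary step. Applying $\rho$-superposability of $F$ to $\sigma$ and each elementary step gives $F(\sigma U_{p+1}) - F(U_{p+1}) = F(\sigma U_p) - F(U_p)$; evaluating at $p=0$ and $p=m$ yields (c). For (d) we construct an analogous sequence anchored at $\{\omega(j),\omega(b)\}$ transforming $T^\omega$ into $T^{\omega \circ (jb)}$; the slack $\rho + 2$ in the hypothesis guarantees that the two anchors remain at distance at least $\rho$ in every tree encountered while commuting the two sequences past each other, so repeated superposability gives the telescoping identity.

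The main obstacle is constructing a valid ordering of elementary perturbations in (b) that keeps every intermediate graph a tree; the edge count itself, the distance-preservation claim, and the iterated superposability used in (c) and (d) are all straightforward once this combinatorial step is in place.
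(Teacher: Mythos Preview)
Your overall strategy matches the paper's, but there is a genuine gap in part~(b) that propagates into~(c) and~(d). When $d_T(i,a)\ge 3$, let $u$ be the neighbour of $i$ on the $i$--$a$ path and $v$ the neighbour of $a$ on that path. Your proposed perturbation $\swap_{u}^{ia}$ (remove $ui$, add $ua$) is \emph{never} a valid tree perturbation, regardless of where you place it in the ordering: it is valid only if the current $i$--$a$ path avoids $u$, but every other perturbation in your list is of the form $\swap_x^{ia}$ or $\swap_y^{ai}$ with $x\in N_T(i)\setminus\{u\}$, $y\in N_T(a)\setminus\{v\}$, and these only reattach pendant subtrees between $i$ and $a$ while leaving the path $i\text{--}u\text{--}\cdots\text{--}v\text{--}a$ intact. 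So that path always passes through~$u$; the same obstruction applies to $\swap_v^{ai}$. The paper handles these two special edges with perturbations of a different shape, $\swap_i^{uv}$ and $\swap_a^{vu}$ (pivoting at $i$ and $a$ rather than at the neighbour); these are valid because the internal $u$--$v$ path avoids both $i$ and~$a$.

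This correction breaks your exact distance-preservation claim in~(c): after performing $\swap_i^{uv}$ the vertex $u$ is no longer adjacent to $\{i,a\}$, so $d_{U_p}(w,\{i,a\})$ can temporarily increase. What the paper actually establishes, and what suffices for the superposability step, is that this distance never drops below $d_T(w,\{i,a\})$; combined with the extra slack $\rho+1$ in the hypothesis this keeps the anchor sets at distance~$\ge\rho$ throughout. There is also a second subtlety in~(c) that your sketch does not address: when $\swap_q^{rs}$ alters the $i$--$a$ path (so that $u$ or $v$ no longer lies on it in~$T'$), the sequence of perturbations that turns $T$ into $T^{(ia)}$ may fail to keep all intermediate graphs acyclic when applied to~$T'$; the paper prescribes a specific ordering of the perturbations to handle this case.
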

  \begin{proof}
  For any permutation  $\omega = (\omega_1,\ldots,\omega_n)\in S_n$ define the  function $F_\omega: \Tset_n \rightarrow \Reals$ by 
  $F_\omega(T):= F(T^\omega)$.     If $\swap_{i}^{jk}T$ is a tree then     $(\swap_{i}^{jk}T)^{\omega} =  \swap_{\omega_i}^{\omega_j \omega_k} T^\omega$. Relabelling also does not change the distances, that is, $d_T(a,b) = d_{T^{\omega}} (\omega_a,\omega_b)$ for all $a,b\in [n]$.  Thus, $F_\omega$ is $\alpha$-Lipschitz and $\rho$-superposable.  Averaging over all $\omega$ proves part (a).
  
  For part (b), we show that  the tree $T^{(ia)}$ can be obtained from $T$ by 
  performing at most $\deg_T(i)+\deg_T(a)$ tree perturbations $\swap_{x}^{yz}$. 
  We denote the set of these perturbations by  $\mathcal{P}^{ia}_T$.
%  Let $V_T(i)$ denote the set of vertices  adjacent to $i$ in $T$.  
  Let $u$ and $v$ be the vertices on the path from $i$  to $a$ in $T$  adjacent to $i$ and $a$, respectively.
  Consider  $\deg_T(i)-1$ perturbations  $\swap_{x}^{ia}$ for all vertices $x \neq u$ 
  adjacent to $i$   and $\deg_T(a)-1$ perturbations   $\swap_{x}^{ai}$ 
  for all  for all vertices $x \neq v$ 
  adjacent to $a$.    If $d_T(a,i) \leq 2$ then performing these 
  $\deg_T(i) + \deg_T(a)-2$ perturbations in any order turns $T$ into  $T^{(ia)}$. 
  Otherwise, all vertices $i,a,u,v$ are distinct and we need two more perturbations   $\swap_{i}^{uv}$  and  
  $\swap_{a}^{vu}$ to obtain  $T^{(ia)}$. 
  This defines the set $\mathcal{P}^{ia}_T$. Now,  since $F$ is $\alpha$-Lipschitz,
   the value of the function changes by at most $\alpha$ after each perturbation so
   \[
   	|F(T) - F(T^{(ia)})| \leq \alpha (\deg_T(i)+\deg_T(a)).
   \]
   The above holds for any $T\in \Tset_n$. 
   Substituting $T^{\omega}$ 
   and observing $\deg_{T^{\omega}}(\omega_i) =\deg_T(i)$, we prove part (b). 
   
   Before proving parts (c) and (d), we outline some important properties of the set  $\mathcal{P}^{ia}_T $ of the tree  perturbations that turn    $T$ into $T^{(ia)}$. 
      \begin{itemize}
     \item[(i)] 
       The   perturbations of $\mathcal{P}^{ia}_T$ can be performed in any order, that is, all intermediate graphs are trees.
        
              \item[(ii)] $d_T(\{x,y,z\}, \{i,a\}) =0$ for any $\swap_{x}^{yz} \in \mathcal{P}^{ia}_T$, 
                that is,  $\{x,y,z\} \cap \{i,a\} \neq \emptyset$ .  
\item[(iii)] 
 the distance from any $w\in[n]$ to $\{i,a\}$  is unchanged by perturbations  $\swap_{x}^{ia}$ or $\swap_{x}^{ai}$. 
 
 \item[(iv)] 
  the distance from any $w\in[n]$ to $\{i,a\}$ can  increase after performing 
  one of the perturbations $\swap_{i}^{uv}$ or $\swap_{a}^{vu}$ but then it decreases back to the initial value after performing 
  the second (so it never gets smaller than the initial distance $d_T(w,\{i,a\})$).
 \end{itemize}

For (c),  observe first that  $d_T(\{i,a\},\{r,s\}) \geq 2$ implies 
that  $i$ and $a$ are adjacent to the same sets of vertices in  $T$ and $T'$.
Consider first the case when both $u$ and $v$ belong to the path from $i$ to $a$ in the tree $T'$.  For example,  this is always the case when  the path from $i$ to $a$  is not affected by removing the edge $qr$. 
 Then, by definition,  $\mathcal{P}^{ia}_T = \mathcal{P}^{ia}_{T'}$ that is we can use 
 the same sets of perturbations to change labels $i$ and $a$ in both trees.
We  order them arbitrary to form a sequence   $(\swap_1, \ldots, \swap_k)$. 
 Note also that 
for any perturbation $\swap_x^{yz} \in \mathcal{P}^{ia}_T $ we have $d_T(\{y,z\},\{r,s\}) \geq \rho$   due to the property (ii) and $d_T(\{i,a\},\{r,s\}) \geq \rho+1$. 
 Since $F$ is $\rho$-superposable and using properties (iii) and (iv), we get that 
  \[
 	F(  \swap_t \cdots \swap_1 T ) -
 	F(  \swap_t \cdots \swap_1 T' )
 	- F(  \swap_{t+1} \cdots \swap_1 T ) + 
 	F(  \swap_{t+1} \cdots \swap_1T' ) =0.
  \]
Summing up these equalities for all $t =0, \ldots k-1$, we get that
\begin{equation}\label{eqT-T'}
	F(T) - F(T') - F(T^{(ia)}) + F(T'^{(ia)})=0.
\end{equation}

We still need to consider the case when removing $qr$ changes the path from 
$i$ to $a$ such that $u$ or $v$ do not lie on the path anymore.  
In this case,  one have to be slightly more careful with the order of perturbations 
 $(\swap_1, \ldots, \swap_k)$ to avoid the appearance of cycles in 
 $\swap_t \cdots \swap_1 T'$. 
    Without loss of generality we may assume that $d_T(i,q)<d_T(i,r)$ (otherwise, swap the roles of $i$ and $a$). 
 Let $v'$ be the vertex adjacent to $a$ that lies on the path from $i$  to $a$ in $T'$.
 In notations of part (b), we define 
 $\swap_1 = \swap_i^{uv}$ and $\swap_2 = \swap_{v'}^{ai}$, 
 then put the remaining perturbations in any order. A sequence  $(\swap_1, \ldots, \swap_k)$ defined in this way ensures that  all intermediate steps from $T'$ to $T'^{(ia)}$ are trees. Repeating the same argument as above, we prove \eqref{eqT-T'}.
   To complete the proof of part (c), we just need to substitute $T$  by $T^{\omega}$ similarly to part (b).

 Finally, we prove (d) by repeteadly using part (c) for a sequence of perturbations 
 $\swap_{q}^{rs} \in \mathcal{P}_T^{jb}$ 
 that turn  $T$ into $T^{(jb)}$.  
 We can apply part (c)   for all intermediate trees $T'$ because 
 the assumption $d_T(\{i,a\}, \{j,b\}) \geq \rho +2$  
 together with  properties (ii), (iii), (iv)  implies that 
  $d_{T'}(\{i,a\}, \{r,s\}) \geq  \rho +1$.
  
  \end{proof}

 \subsection{Martingale properties} 

Here, we establish the properties of martingales $\Y$ and $\Z(T)$ from \eqref{def_YZ}  
needed to apply the results 
of Section \ref{S:m_theory}.    For a tree  $T\in \Tset_n$ and $A,B\subset [n]$, define 
  \[
  	\one_T^\rho(A,B) := 
  	\begin{cases}
  	 1, & \text{if } d_T(A,B)< \rho,\\
  	 0, & \text{otherwise.}
  	\end{cases}
  \]
%That is,  $ \|T\|_\rho $ is the maximal size of a ball of radius $\rho$ in $T$.
We will repeatedly use the fact that for any $T\in \Tset_n$ and $i \in [n]$, we have
\begin{equation}\label{norm-beta}
 \sum_{j=1}^{n} \one_T^\rho(\{i\},\{j\}) \leq \rho^2 \beta(T),
 \end{equation}
 where $\beta(T)$ is  the parameter defined in \eqref{def_beta}.
In the following, for simplicity of notations, we write $ \one_T^\rho(i,B)$, 
or  $ \one_T^\rho(A,j)$, or $ \one_T^\rho(i,j)$  when $A$,  or $B$, or both are one-element sets.
Let $\Tset_n^d\subset \Tset_n$ be the set  of trees with degrees at most $d$.
We denote by $a\wedge b$ the minimum of two real numbers $a$, $b$.

%Define also
%\[
%	\beta(T, d)= \max_{i \in [n]}
%	\left| \{j \in [n] \st d_T(i,j)\leq d\}\right| 
%\]
%which is  the maximum size of the $d$-neighborhood in the tree $T$. 
%Then, for any $i \in [n]$ and non-increasing $\rho : \Naturals \rightarrow \Reals^+$ with $\rho(n+1)=0$, we have
%\[
%	\sum_{j=1}^{n} \rho(d_T(i,j)) =
%	\sum_{d=1}^{n}\rho(d) \left| \{j \in [n] \st d_T(i,j) = d\}\right| 
%	 \leq 
%	\sum_{d=0}^n (\rho(d)-\rho(d+1)) \beta(T, d).
%\]
\begin{lemma}\label{L:tree-mart}
    Let   $F: \Tset_n\rightarrow \Reals$ be $\alpha$-Lipschitz and  $\rho$-superposable
   for some $\alpha\geq 0$ and $\rho\geq 1$. Then, the following holds  for  all $i\in[n-1]$, 
   $d \in \Reals^+$ and $T \in \Tset_n^d$.
	\begin{itemize}
		\item[(a)]   
		$|Y_i-Y_{i-1}| \leq \ran_{\calF_{i-1}}  [Y_i]  \leq \alpha$.
		
		\item[(b)] $ \displaystyle
		\ran_{\calF_{i-1}} \left[\Var_{\calF_i} [Y_{n-1}]  \right]
		\leq 
		 32\alpha^2 \rho^2  \mysup_{\calF_{i-1}}  \left[ \E_{\calF_{i}} [\beta(T(\U))] \right].
		$
	\item[(c)]
	$\displaystyle
				     |Z_i(T)-Z_{i-1}(T)|\leq  \ran_{\calG_{i-1}} [Z_i(T)] \leq 
				 \max_{\omega,(ia)\in S_n} |F(T^{\omega}) - F(T^{ \omega \circ (ia) })| \leq 
				     2\alpha  d.
                     $
		
		\item[(d)]
		$\displaystyle
			\ran_{\calG_{i-1}}  \left[\Var_{\calG_i} [Z_{n-1}(T)]\right]  \leq 64\alpha^2 d^2  (\rho+2)^2 \beta(T) \log n.
		$
		\item[(e)]  
		Let $V(\uvec) :=  \Vari{Z_{n-1}(T(\uvec))}  =   \Vari {F_{T(\uvec)} (\X)}$. Then, $0\leq V(\U) \leq 4 \alpha^2 n^2$ and
		 \begin{align*}
		 	\ran_{\calF_{i-1}} \Big[ \E_{\calF_i}  &\left[V(\U) \one_{T(\U) \in \Tset_n^d}\right] \Big]
		 	\\
		 	&\leq   \alpha^2
	 	\mysup_{\calF_{i-1}} 
	 	\left[ 		  \E_{\calF_{i}}  \left[4n^2 \one_{T(\U) \notin \Tset_n^d}  + 8d^2 (\rho+1)^2 \beta(T(\U)) \right]\right].
		\end{align*}	
	\end{itemize}
\end{lemma}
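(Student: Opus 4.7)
The plan is to prove each part by combining the Doob martingale framework of \cref{S:m_theory} with the Lipschitz and superposability properties of $F_T$ and $\hat F$ established in \cref{l:FF}. The recurring tools are the telescoping identity \eqref{telescope2} for conditional variance tails, the bounds from \cref{Lemma_perm} for permutation-based Doob martingales, and the counting inequality \eqref{norm-beta} tying local neighbourhoods to $\beta(T)$. Part (a) is immediate from \eqref{eq:bound_range}: since the coordinates $U_1,\ldots,U_{n-1}$ are independent, $\ran_{\calF_{i-1}}[Y_i]$ is bounded by the maximum change of $\hat F(T(\uvec))$ when only $U_i$ is altered, which corresponds to a single tree perturbation at vertex $i+1$ in the Aldous--Broder construction; \cref{l:FF}(a) then delivers the bound $\alpha$. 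For part (b), I apply \eqref{telescope2} to decompose $\Var_{\calF_i}[Y_{n-1}]$ into $\sum_{j>i}\E_{\calF_i}[(Y_j-Y_{j-1})^2]$ and bound the conditional range of each term. Writing $Y_j-Y_{j-1}$ as a conditional expectation of $\hat F$-differences under a resampling of $U_j$, the $\rho$-superposability of $\hat F$ implies that this quantity is invariant under a further perturbation at coordinate $i$ whenever the two edge-swaps at vertices $i+1$ and $j+1$ are at tree-distance at least $\rho$. Thus $\ran_{\calF_{i-1}}[\E_{\calF_i}[(Y_j-Y_{j-1})^2]]$ vanishes unless these edges lie within distance $\rho$; by \eqref{norm-beta} the number of such $j$ is $O(\rho^2 \beta(T(\U)))$, and each contributes at most $\alpha^2$ by part (a), producing the stated constant.

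Parts (c) and (d) are handled through \cref{Lemma_perm}. For (c), \cref{Lemma_perm}(a) yields $\ran_{\calG_{i-1}}[Z_i(T)] \leq \alpha_i[F_T]$, and \cref{l:FF}(b) bounds each $\max_\omega|F_T(\omega)-F_T(\omega\circ(ia))|$ by $\alpha(\deg_T(i)+\deg_T(a)) \leq 2\alpha d$ whenever $T\in\Tset_n^d$. For (d), combining \eqref{telescope2} with \cref{Lemma_perm}(b) gives
\[
\ran_{\calG_{i-1}}\bigl[\Var_{\calG_i}[Z_{n-1}(T)]\bigr] \leq 2\sum_{j>i}\alpha_j[F_T]\,\varDelta_{ij}[F_T],
\]
and $\alpha_j[F_T]\leq 2\alpha d$ by (c). The four-term expression inside $\varDelta_{ij}[F_T]$ vanishes whenever $d_T(\{i,a\},\{j,b\}) \geq \rho+2$ by \cref{l:FF}(d); counting the surviving pairs $(a,b)$ via \eqref{norm-beta} and then summing over $j$ produces the $\log n$ factor from $\sum_{j>i}(n-j)^{-1}$, matching the claim.

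Part (e) is the most delicate. The global bound $V(\U) \leq 4\alpha^2 n^2$ follows from the observation that any two permutations of $[n]$ are linked by at most $n-1$ transpositions chosen via a cycle decomposition so that every vertex appears in at most two of them; summing the per-transposition bounds from \cref{l:FF}(b) gives $|F_T(\omega)-F_T(\omega')| \leq 4\alpha(n-1)$, hence $V(T) \leq 4\alpha^2 n^2$. For the range estimate, altering $U_i$ from $u$ to $u'$ changes $T(\U)$ by a single perturbation at $i+1$, and splitting pointwise in $U_{>i}$ over the four configurations of $(\one_{T(\U^u)\in\Tset_n^d},\one_{T(\U^{u'})\in\Tset_n^d})$ reduces the analysis to a ``both good'' case plus the ``one good, one bad'' cases, the latter absorbed into the term $4n^2\one_{T\notin\Tset_n^d}$ on the right-hand side via the global bound. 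The ``both good'' case requires the refined inequality $|V(T)-V(T')| \leq 8\alpha^2 d^2(\rho+1)^2 \beta(T)$, which I plan to derive from the covariance identity $V(T)-V(T') = \Cov(H, F_T+F_{T'})$ for $H := F_T-F_{T'}$, exploiting the fact that by \cref{l:FF}(c) one has $H(\omega) = H(\omega\circ(pq))$ whenever $d_T(\{p,q\},\{j,k\}) \geq \rho+1$, so that $H$ is effectively localised around the perturbed pair onto $O((\rho+1)^2\beta)$ relevant transpositions. The main obstacle is exactly this last estimate: converting the locality of $H$ into the stated factor $(\rho+1)^2\beta$ for $|V(T)-V(T')|$ requires more than a direct Cauchy--Schwarz against $\sqrt{\Var(F_T+F_{T'})}$, which would introduce an extraneous factor of $\sqrt n$; instead I plan to perform a joint Doob martingale decomposition of the covariance and invoke \cref{l:FF}(b,c) to control the increments of $H$ and of $F_T+F_{T'}$ position by position. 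Parts (a)--(d) then reduce to careful bookkeeping of the Lipschitz and superposability constants through the martingale structure.
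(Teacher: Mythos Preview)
Your plan is essentially the paper's approach. In particular, your covariance identity $V(T)-V(T')=\Cov(F_T-F_{T'},\,F_T+F_{T'})$ together with the ``joint Doob martingale decomposition'' you propose coincide algebraically with what the paper does: writing $A_i:=Z_i(T)-Z_{i-1}(T)$ and $B_i:=Z_i(T')-Z_{i-1}(T')$, one has $A_i^2-B_i^2=(A_i-B_i)(A_i+B_i)$, which is exactly the $i$-th term of your covariance expansion.

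There is, however, one ingredient you do not name and without which your bound in~(e) acquires an extra factor of $\log n$. For the increment of $H=F_T-F_{T'}$, \cref{Lemma_perm}(a) gives $|H_i-H_{i-1}|\leq\alpha_i[H]$, and \cref{l:FF}(c) only kills the contribution of a transposition $(ia)$ with $a>i$ when $d_T(\{i,a\},\{r,s\})\geq\rho+1$. If $i$ itself is far from $\{r,s\}$ but some $a>i$ is close, the term survives; averaging over $a$ then leaves $\alpha_i[H]\leq 4\alpha d\cdot N_\rho/(n-i)$ for the far indices, with $N_\rho\leq 2(\rho+1)^2\beta(T)$, and summing $1/(n-i)$ over $i$ produces the unwanted $\log n$. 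The paper avoids this by first \emph{relabelling} so that $T'=\swap_3^{12}T$ and $d_T(1,\{1,2\})\leq\cdots\leq d_T(n,\{1,2\})$; under this ordering, $a>i$ forces $d_T(a,\{1,2\})\geq d_T(i,\{1,2\})$, so whenever $i$ is far every $a>i$ is also far and $\alpha_i[H]=0$ outright. With this relabelling your covariance decomposition does give the stated $8\alpha^2 d^2(\rho+1)^2\beta(T)$.

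A smaller remark on part~(b): the superposability condition concerns the sets $\{i\wedge u_i,\,i\wedge u_i'\}$ and $\{j\wedge u_j,\,j\wedge u\}$, not the pivot vertices $i{+}1$ and $j{+}1$ themselves. Your phrase ``the number of such $j$ is $O(\rho^2\beta)$'' therefore hides an average over the resampled $u_j$ and $u$; the paper splits this into a term $\one_{T(\uvec)}^\rho(j,\{i\wedge u_i,i\wedge u_i'\})$ (arising when $u_j$ or $u$ is at least $j$) and a residual $O(\rho^2\beta/n)$ term, and only after summing both pieces over $j$ does the constant $32$ emerge.
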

\begin{proof}	
 	Using bound  \eqref{eq:bound_range}, we find that  
	\[
		|Y_i - Y_{i-1}|\leq \ran_{\calF_{i-1}} [Y_i] \leq \max |\hat{F}(T(\uvec)) - \hat{F}(T(\uvec'))|,
	\]
	where $\uvec,\uvec' \in [n]^{n-1}$ differ in $i$-th coordinate. Observe that
	\begin{equation}\label{eq_vv}
		T(\uvec') =  \swap_{i+1}^{i \wedge u_i \, i\wedge u_i'} T(\uvec).
	\end{equation}
	From \cref{l:FF}(a), we know that $\hat{F}(T)$ is $\alpha$-Lipschitz.	 Part (a) follows.
	
	As explained in $\eqref{var-as-func}$, we have $Y_i = f_i(\U)$, where 
	\[
		 f_i(\uvec) = \Exp{\hat{F}(T(\U)) \mid u_{\leq i}}
	\]
	and $\E (\cdot \mid u_{\leq i})$ stands for $\E (\cdot \mid U_1=u_1,\ldots,U_i=u_i)$. Let $0\leq i<j \leq  n-1$. Using formula \eqref{eq_vv}, we find that 
	\[
		f_j(\uvec) - f_{j-1}(\uvec) =  
		 \dfrac{1}{n}\sum_{u = 1}^n \Exp{ \hat{F}(T(\U))  - 	\hat{F}\left(\swap_{j+1}^{j\wedge u_j\, j \wedge u}T(\U)
		\right) \mid u_{\leq j}}.
	\]  
	Consider $\uvec'\in [n]^{n-1}$ that differs from $\uvec$ only in $i$-th coordinate.
	 Then, we have 
	 \begin{align*}
	 	f_j(\uvec) - f_{j-1}(\uvec) &- f_{j}(\uvec') +  f_{j-1}(\uvec') 
	 	= \dfrac{1}{n}\sum_{u = 1}^n 
	 	\E \bigg[ \hat{F}(T(\U))  - 	\hat{F}\left(\swap_{j+1}^{j\wedge u_j \, j \wedge u}T(\U) \right) \\&\qquad \qquad -
	 		\hat{F}\left(\swap_{i+1}^{i \wedge u_i \, i\wedge u_i'}T(\U)\right)
	 		+ \hat{F}\left( \swap_{j+1}^{j\wedge u_j \, j \wedge u} \swap_{i+1}^{i \wedge u_i \, i\wedge u_i'} T(\U)\right)
	 		 \mid u_{\leq j} \bigg]
	 \end{align*}
		 From part (a), we have  $0\leq |f_j - f_{j-1}| \leq \alpha$. 
	 Observe also that  if $U_1=u_1, \ldots,U_{j-1}=u_{j-1} $ and $v \in [i]$ then
	 \[
	 d_{T(\U)}(v, \{i \wedge u_i, i \wedge u_i'\}) =d_{T(\uvec)}(v, \{i \wedge u_i, i \wedge u_i'\}).
	 \]  
	 That is, the distance between $v$
	and $\{i \wedge u_i,i \wedge u_i'\}$ 	is completely determined by
	 $u_1, \ldots, u_{j-1}$ and $v$.  
	 From \cref{l:FF}(a), we know that $\hat{F}(T)$ is $\rho$-superposable. Thus, we find that
	 	\begin{align*}
		|(f_j(\uvec) - f_{j-1}(\uvec))^2 &- (f_{j}(\uvec') -  f_{j-1}(\uvec'))^2| 
		\leq 
		 2 \alpha  |f_j(\uvec) - f_{j-1}(\uvec) -f_{j}(\uvec') +  f_{j-1}(\uvec')| 
		\\
		&
		\leq 
		 \dfrac{4\alpha^2}{n}\sum_{u = 1}^n     \one_{T(\uvec)}^\rho (\{j\wedge u_j, j \wedge u\}, \{i\wedge u_i,i\wedge u_i'\}))
%		 \\
%		& \leq 
%		  4\alpha^2 \left(   \one_{T(\uvec)}^\rho(j\wedge u_j, \{i\wedge u_i, i\wedge u_i' \}) + 
%		  		   \one_{T(\uvec)}^\rho(j, \{i\wedge u_i, i\wedge u_i' \}) \right)
%		  		  +
%		  \dfrac{8\alpha^2}{n}   \|T(\uvec)\|_\rho
	\end{align*}
	Using \eqref{norm-beta}, we can bound
	\begin{align*}
	 	\dfrac{1}{n}\sum_{u = 1}^n \E &\left[      \one_{T(\uvec)}^\rho (\{j\wedge u_j, j \wedge u\}, \{i\wedge u_i,i\wedge u_i'\})		  \mid u_{\leq j-1} \right]
		  \\
		  	  &=
		 \dfrac{1}{n^2}\sum_{u = 1}^n \sum_{u_j =1}^n  
		    \one_{T(\uvec)}^\rho (\{j\wedge u_j, j \wedge u\}, \{i\wedge u_i,i\wedge u_i'\})
		  \\ 
		 &\leq 2\cdot  \one_{T(\uvec)}^\rho (j,  \{i\wedge u_i,i\wedge u_i'\}) +
		  \dfrac{2}{n} \sum_{k=1}^{j-1}   \one_{T(\uvec)}^\rho (k,  \{i\wedge u_i,i\wedge u_i'\})
		\\
		  &\leq  
		  2\cdot  \one_{T(\uvec)}^\rho (j,  i\wedge u_i)  +
		  2\cdot  \one_{T(\uvec)}^\rho (j,  i\wedge u_i') +
		  \dfrac{4 }{n} \rho^2 \beta(T(\uvec)).
 	\end{align*}
	Similarly to \eqref{var-as-func}, let $\ran_{\calF_{i-1}} \left[\Var_{\calF_i} [Y_{n-1}]\right]   = r(U_1,\ldots,U_{i-1})$.
	Using \eqref{telescope2},  \eqref{eq:bound_range} and taking the conditional expectation
	given $U_1= u_1,\ldots,U_{i-1}=u_{i-1}$  for the bounds above, we obtain that
	\begin{align*}
		r(u_1,\ldots,u_{i-1})
		&= \max _{u_i, u_i' \in [n]} 
		\bigg|\sum_{j=i+1}^n \E\left[ (f_j(\U) - f_{j-1}(\U))^2  \mid  u_{\leq i-1}, U_i = u_i\right]
		\\ &\qquad \qquad  -\sum_{j=i+1}^n \E\left[ (f_j(\U) - f_{j-1}(\U))^2  \mid  u_{\leq i-1}, U_i = u'_i\right] \bigg|
		\\
		&
		  \leq    \dfrac{16\alpha^2}{n}  
		    \max_{u \in [n]}
		   \sum_{j=i+1}^n
				       \E  \left[ \one_{T(\uvec)}^\rho (j,  i\wedge u)  + \rho^2 \beta(T(\U)) \mid  u_{\leq i-1},  U_i = u \right] 
 		 \\ 
 		  &\leq 
 		  32\alpha^2 \rho^2 \max_{u_i \in [n]} \E (\beta(T(\U)) \mid  u_{\leq i}).
	\end{align*}
	This completes the proof part (b).

	Part (c) immediately follows from Lemma \ref{Lemma_perm}(a) and \cref{l:FF}(b). Indeed, 
	 \begin{equation}\label{eq_alpha}
	 	\alpha_i[F_T]  \leq \max_{\omega, (ia)\in S_n} 
	 	|F_T(\omega) - F_T(\omega\circ (ia))| \leq 2\alpha d. 
	 \end{equation}
 		For  (d),  recall from \eqref{eq:CLTbounds} that
 	\begin{equation}\label{Var_split}
 		 \ran_{\calG_{i-1}} \left[ \Var_{\calG_i} [Z_{n-1}(T)]\right]
 		  \leq \sum_{j=i+1}^{n-1} \ran_{\calG_{i-1}} \left[ \E_{\calG_i} \left[(Z_{j}(T) - Z_{j-1}(T))^2\right] \right].
 	\end{equation}
 	 We  will apply  Lemma \ref{Lemma_perm}(b) to estimate the right-hand side of \eqref{Var_split}.
 	 From Lemma \ref{l:FF}(d) and the bound \eqref{eq_alpha}, we get that
 	 \[
 	 	 |F_T(\omega) - F_T(\omega\circ(ia)) - F_T(\omega\circ(jb)) + F_T(\omega \circ(jb)\circ(ia)) |
 	 	 \leq 4 \alpha d \, \one_{T}^{\rho+2}(\{i,a\}, \{j,b\}).
 	 \]	 
 	 	Bounding
 	\[\one_{T}^{\rho+2}(\{i,a\}, \{j,b\}) \leq 
 	 \one_{T}^{\rho+2}(i,j) + \one_{T}^{\rho+2}(i,b) + \one_{T}^{\rho+2}(a,j) + \one_{T}^{\rho+2}(a,b)
 	\]
 	and using \eqref{norm-beta}, we find that, 
 	 	 	for $1\leq i <j \leq n-1$,	
 	\begin{align*}
 		\varDelta_{ij} [F_T] \leq 4 \alpha d
 		\sum_{a=i+1}^n \sum_{b=j+1}^n \frac{ \one_{T}^{\rho+2}(\{i,a\}, \{j,b\})}{(n-i)(n-j)} 
 		\leq 4 \alpha d \left( \one^{\rho+2}_T(i,j) +  \frac{3 (\rho+2)^2 \beta(T)}{n-j} \right).
 	\end{align*} 
 	Combining \eqref{norm-beta}, \eqref{eq_alpha}, Lemma \ref{Lemma_perm}(b) and the inequality 
 	\[ 1+3 \sum_{k=1}^{n-1} k^{-1} \leq 4+ 3 \log n \leq 4\log n,
 	\]
 	 we obtain that
 	\begin{align*}
 	   \ran_{\calG_{i-1}} \left[ \Var_{\calG_i} [Z_{n-1}(T)] \right]
 	    &\leq \sum_{j=i+1}^{n-1}    16 \alpha^2 d^2  \left( \one^{\rho+2}_T(i,j) +  \frac{3(\rho+2)^2 \beta(T)}{n-j} \right)
 	    \\&\leq 64 \alpha^2 d^2 (\rho+2)^2 \beta(T) \log n.
 	\end{align*}

 	 Finally, we proceed to part (e). Since $F$ is $\alpha$-Lipschitz, 
 	 we have 
 	 $|F(T)  - F(T')| \leq 2 \alpha n$   for any two trees $T,T'\in \Tset_n$.  Indeed, applying at most $n$ 
 	 perturbations of type $\swap_{x}^{y 1}$, where $x$ is a leaf,  we can turn any tree into a star centered at vertex $1$.
 	 Thus, we can bound
 	 \[
 	 	 0 \leq V(\uvec)\leq  4 \alpha^2 n^2.
 	 \]
 	 Then, for any $\mathcal{A} \subset [n]^{n-1}$ and $u_1, \ldots, u_{i-1} \in [n]$,
 	  	\begin{align*}
 	    &\ran \Big[ \E_{\calF_i} \left[V(\U)\one_{\U \in \mathcal{A}}\right] \mid  u_{\leq i-1} \Big] 
 	    \\
 	    &=  
 	         \max_{u \in[n]} \E \left[V(\U)\one_{\U \in \mathcal{A}}  \mid  u_{\leq i-1}, U_i =u\right] -
 	         \min_{u \in [n]}  \E \left[V(\U)\one_{\U \in \mathcal{A}}  \mid  u_{\leq i-1}, U_i =u\right]
 	          \\
 	           &\leq 
 	          4 \alpha^2 n^2    \max_{u_i \in[n]} \Pr(\U \notin \mathcal{A} \mid u_{\leq i})
 	         %\\&\qquad \qquad \qquad \qquad
 	           +   \max_{u_i,u \in [n]} 
	          \E \left[  (V(\U) -  V(\U')) \one_{\U,\U' \in \mathcal{A}}
	          \mid  \uvec_{\leq i},  U_i' =u \right] 
 	\end{align*}
 	where $\U'$ differs from $\U$ in $i$-th coordinate only.
 	For the following we put $\mathcal{A} = \{\uvec \in [n]^{n-1} \st T(\uvec) \in \Tset_n^d\}$. It remains to bound 
 	$V(\U)-  V(\U')$ when $T(\U),T(\U') \in \Tset_n^d$.  
 	
 	  	Consider any $\uvec,\uvec' \in [n]^{n-1}$  that differ in $i$-th coordinate only 
 	  	and  $T(\uvec), T(\uvec') \in \Tset_n^d$.  	If   $T(\uvec) = T(\uvec')$ then $V(\uvec) =  V(\uvec')$. 
 	Otherwise,  
 	 recalling \eqref{eq_vv}, we can find some relabelling $\sigma \in S_n$ 
 	  that  the trees $T = T(\uvec)^{\sigma}$, $T'=T(\uvec')^{\sigma}$ satisfy 	 $T' = \swap_{3}^{12} T$  and
 	\[
 		0 = d_T(1, \{1,2\}) \leq \cdots \leq d_T(n, \{1,2\}).  
 	\]
 	 	 Note that $\Var [F_T(\X)] = V(\uvec)$ and $\Var [F_{T'}(\X)] = V(\uvec')$. 	 
 Using Lemma \ref{l:FF}(c) and \eqref{eq_alpha},  we find that, 	for any $1\leq i <a\leq n$, 
 	 \begin{align*}
 	 	 |F_T(\omega) - F_T(\omega\circ(ia)) - F_{T'}(\omega) + F_{T'}(\omega \circ(ia)) |
 	 	 &\leq 4 \alpha d \, \one_{T}^{\rho+1}(\{i,a\}, \{1,2\})
 	 	 \\ &\leq 4 \alpha d \, \one_{T}^{\rho+1}(i, \{1,2\}).
 	 \end{align*}
 	Applying Lemma \ref{Lemma_perm}(a) to the function	$F_T - F_{T'}$, we obtain
 	\begin{align*}
 		|Z_i(T) - Z_{i-1}(T) - Z_i(T') + Z_{i-1}(T')| 
 		\leq  
 		\alpha_i(F_T - F_{T'}) \leq 4 \alpha d \, \one_{T}^{\rho+1}(i, \{1,2\}).
  	\end{align*}
 	We have already proved in part (b) that   $|Z_i(T) - Z_{i-1}(T)| \leq 2 \alpha d$.
 	  Using \eqref{telescope} and \eqref{norm-beta}, we  bound
 	\begin{align*}
 	V(\uvec) - V(\uvec')
 		&= \Vari{Z_{n-1}(T)}-  \Vari {Z_{n-1}(T')} \\
 		 &= \sum_{i=1}^{n-1}  \E \left[ ( Z_i(T) - Z_{i-1}(T))^2  - (Z_i(T') - Z_{i-1}(T'))^2\right]
 		\\ &\leq \sum_{i=1}^{n-1} 4\alpha^2 d^2 \one_{T}^{\rho+1}(i, \{1,2\}) \leq 8 \alpha^2d^2  (\rho+1)^2 \beta(T). 
 	\end{align*}
Part  (e) follows. 
\end{proof}

\subsection{Proof of Theorem \ref{T:main}}\label{S:main}

Before proving  of Theorem \ref{T:main}, we need one more lemma. Let
\begin{align*}
	\mathcal{U}_{\rm small} &:= \{\uvec\in [n]^{n-1} \st T(\uvec) \in \Tset_{n}^{\log n} \text{ and } \beta(T(\uvec)) \leq   \log^4 n\},
\\
	\mathcal{U}_{\rm big} &:= \{\uvec\in [n]^{n-1} \st T(\uvec) \in \Tset_{n}^{ 2 \log n} \text{ and } \beta(T(\uvec)) \leq  2 \log^4 n\}.
\end{align*}
\begin{lemma}\label{l:small+big}
The following asymptotics bounds hold   for any $\uvec \in \mathcal{U}_{\rm small}$,  $u \in [n]$:
\[
	\Pr(\U \notin  \mathcal{U}_{\rm small}) = e^{-\omega(\log n)}, \qquad 
	\Pr(\U \notin  \mathcal{U}_{\rm big} \mid u_{\leq i-1}, U_i =u) = e^{-\omega(\log n)}.
\]
\end{lemma}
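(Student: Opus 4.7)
The strategy is to treat the two bounds separately: the first follows by relating $T(\U)$ to the uniform random tree $\Tran$, while the second requires a careful analysis of the Aldous--Broder procedure conditioned on its first $i$ steps.

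For the first bound, since $\X$ is uniformly distributed on $S_n$ and independent of $\U$, the tree $T(\U)^{\X}$ is uniformly distributed on $\Tset_n$. Both $\beta(\cdot)$ and the maximum degree are invariants of the isomorphism class of the tree, so they have the same distributions under $T(\U)$ as under $\Tran$. Moon's bound~\eqref{eq:Moon} with $d = \log n$ yields $\Pr(T(\U) \notin \Tset_n^{\log n}) \leq n / (\log n)! = e^{-\omega(\log n)}$, and \cref{T:beta} gives $\Pr(\beta(T(\U)) > \log^4 n) = e^{-\omega(\log n)}$. A union bound yields the first claim.

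For the second bound, fix $\uvec \in \mathcal{U}_{\rm small}$ and $u \in [n]$, and condition on $U_1 = u_1, \ldots, U_{i-1} = u_{i-1}$, $U_i = u$. Under this conditioning, the first $i$ edges of $T(\U)$ form a fixed subtree $T_\ast$ on $[i+1]$ that coincides with the first $i-1$ edges of $T(\uvec)$ together with one additional edge incident to vertex $i+1$. Since $T(\uvec)\in \Tset_n^{\log n}$, the maximum degree of $T_\ast$ is at most $\log n + 1$. Vertices $i+2, \ldots, n$ are then attached via $V_j = \min(j, U_j)$ with independent uniform $U_j \in [n]$, so for each $v \in [n]$ the number of additional attachments $X_v := |\{j \in \{i+1, \ldots, n-1\} : V_j = v\}|$ is stochastically dominated by $1 + \Bin(n, 1/n)$, since $\Pr(V_j = v) \leq 1/n$ when $j \neq v$ and $\Pr(V_v = v) \leq 1$. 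The Binomial tail bound $\Pr(\Bin(n, 1/n) > \log n - 1) \leq 1/(\log n - 1)! = e^{-\omega(\log n)}$ combined with a union bound over $v \in [n]$ gives $\Pr(T(\U) \notin \Tset_n^{2\log n} \mid u_{\leq i-1}, U_i = u) = e^{-\omega(\log n)}$.

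The main obstacle is controlling $\beta(T(\U))$ under the conditioning, since sphere sizes depend on the global tree structure rather than on local degrees alone. The plan is to adapt the proof of \cref{T:beta} to the Aldous--Broder process continued from step $i+1$ with fixed initial configuration $T_\ast$. For any vertex $v$, the sphere $S_d(v)$ in $T(\U)$ decomposes into its intersection with $T_\ast$ (whose size is controlled by the spheres in $T(\uvec)$, and hence bounded by $O(d \log^4 n)$ since $\uvec \in \mathcal{U}_{\rm small}$) together with the vertices inserted during the random continuation, which concentrate by the same martingale estimates as in \cref{T:beta}, now using the max-degree bound just established to control the influence of the prescribed initial edges on later attachments. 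The factor-of-two slack between $\log^4 n$ and $2\log^4 n$ is chosen precisely to absorb both the single perturbed edge at step $i$ and the conditional fluctuations, keeping the exceptional probability at $e^{-\omega(\log n)}$.
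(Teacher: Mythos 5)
Your first bound is proved exactly as in the paper: both reductions (Moon's inequality for degrees, \cref{T:beta} for $\beta$) are the same, and the union bound closes it. No issue there.

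Your second bound is where the proposal departs from the paper, and where it has a genuine gap. You attempt a direct analysis of the Aldous--Broder process conditioned on its first $i$ steps with initial tree $T_\ast$. The max-degree portion of that analysis is broadly workable (stochastic domination of each vertex's future attachment count by roughly $1+\Bin(n,1/n)$), though it is a heavier route than necessary. The $\beta$ portion, however, is not actually proved: you explicitly call it ``the main obstacle,'' and the resolution you offer is only a plan — ``adapt the proof of \cref{T:beta} to the Aldous--Broder process continued from step $i+1$.'' This is not a small gap. The proof of \cref{T:beta} hinges on the unconditional structure of a uniformly random tree: conditionally on the sphere sizes so far, the next sphere size is an explicit binomial (Lemma \ref{bin_leafs}), and the martingale/truncation machinery is built on top of that. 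Once you freeze an arbitrary initial tree $T_\ast$ on $[i+1]$ and then grow, the conditional law of sphere sizes around a generic vertex is no longer of that form, and the concentration argument would have to be reconstructed from scratch; claiming the ``factor-of-two slack'' absorbs this does not substitute for a proof.

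The paper sidesteps all of this with a coupling/monotonicity observation that you do not use. Let $\U'$ agree with $\U$ in coordinates $\geq i$ but have independent, arbitrary coordinates $u_1',\dots,u_{i-1}'$ in positions $<i$. Because the edge produced at step $j$ depends only on the $j$-th coordinate, $T(\U)$'s early edges ($j<i$) lie in $T(\uvec)$ and its late edges ($j\geq i$) lie in $T(\U')$, so $T(\U)\subseteq T(\uvec)\cup T(\U')$ as edge sets. Hence if $\uvec\in\mathcal{U}_{\rm small}$ (given) and $\U'\in\mathcal{U}_{\rm small}$, then $\U\in\mathcal{U}_{\rm big}$, giving
\[
\Pr\bigl(\U\notin\mathcal{U}_{\rm big}\mid u_{\leq i-1}\bigr)\leq\Pr\bigl(\U\notin\mathcal{U}_{\rm small}\bigr),
\]
after averaging over $u_1',\dots,u_{i-1}'$. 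Conditioning further on $U_i=u$ costs only a factor $n$, which is harmless against $e^{-\omega(\log n)}$. This reduces the conditional statement to the unconditional one that you already established in part 1, and no re-derivation of \cref{T:beta} is needed. To repair your proposal you would either need to carry out the conditional concentration argument in full (substantial additional work), or — better — adopt this coupling step, which is both shorter and avoids the obstacle you identified.
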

\begin{proof}
The first bound follows immediately from \eqref{eq:Moon} and \cref{T:beta}.
For the second, observe that, for any $u_1',\ldots, u_{i-1}' \in [n]$,  
\[
	 \Pr \left(\U \in  \mathcal{U}_{\rm small} \mid 
	U_1 = u_1', \ldots, U_{i-1} = u_{i-1}'
	 \right)
	 \leq 
	 \Pr \left(\U \in \mathcal{U}_{\rm big} \mid u_{\leq i-1}
	 \right).
\]
Indeed, let  $\U$, $\U'$ are such that 
$U_j = u_j$ and $U_j' = u_j'$ for $j \in [i-1]$ and $U_j = U_j'$ for $j \geq i$.
Then,  $T(\U) \subset T(U') \cup T(\uvec)$ because the edges corresponding from $i-1$ steps of the  Aldous -Broder algorithm for $T(\U)$ lie in $T(\uvec)$,  while the remaining edges are covered by $T(U')$). We know that $\uvec \in \mathcal{U}_{\rm small}$. Therefore, if  $\U' \in  \mathcal{U}_{\rm small}$ then  $\U \in \mathcal{U}_{\rm big}$. 
	 
Next, averaging over all 	 $u_1',\ldots, u_{i-1}' \in [n]$, we conclude that 
\[
	\Pr \left(\U \notin \mathcal{U}_{\rm big} \mid u_{\leq i-1}
	 \right) \leq  \Pr \left(\U \notin \mathcal{U}_{\rm small}\right).
\]
Note that, 
for any $u \in [n]$, 
\begin{align*}
	\Pr(\U \notin \mathcal{U}_{\rm big} \mid  u_{\leq i-1}, 
	U_i = u) 
	= \frac{\Pr(\U \notin \mathcal{U}_{\rm big}, U_i = u\mid  u_{\leq i-1})}
	{\Pr (U_i = u \mid u_{\leq i-1})}  
	\leq n  \Pr \left(\U \notin \mathcal{U}_{\rm small}
	 \right).
	   %=	 e^{-\omega(\log n)}.
\end{align*}
Recalling   $ \Pr \left(\U \notin \mathcal{U}_{\rm small}\right) = e^{-\omega(\log n)}$, we complete the proof.
\end{proof}

Now we are ready to prove  Theorem \ref{T:main}, our main result. 
Let $\Y$ and $\Z(T)$ be the martingales from \eqref{def_YZ}.
Consider the  sequence  $\boldsymbol{W} = (W_0, \ldots, W_{2n-2})$  defined by
\[
W_i :=   
\begin{cases}
  Y_i, &\text{if } i =0,\ldots, n-1,
  \\
  Z_{i-n+1}(T(\U)), & \text{if  $i \geq n$  and $T(\U) \in \Tset_n^{\log n}$,} 
  \\
    Y_{n-1}, & \text{if  $i \geq n$  and $T(\U) \notin \Tset_n^{\log n}$.}
\end{cases}
\]
Note that $\boldsymbol{W}$ is a martingale with  the respect to the filtration $\calF'_0, \ldots, \calF'_{2n-2}$,  
where 
$
\calF'_i = \calF_i,
$
for $i\leq n-1$ and 
$
	\calF'_i = \calF_{n-1} \times \calG_{i-n+1},
$
for $i \geq n$.
Using   \eqref{eq:Moon},  \eqref{telescope}, and \cref{L:tree-mart}(e), we get that
\begin{align*}
	\Vari{W_{2n-2}} &=  \Vari {Y_{n-1}} + \E \left[  V(\U) \one_{T(\U) \in  \Tset_n^{\log n}} \right]
	\\&=   \Vari {Y_{n-1}}  + \Exp{ V(\U)} -  
	 4\alpha^2 n^2 e^{-\omega(\log n)}
	= \Vari {F(\Tran)} -  \alpha^2 e^{-\omega(\log n)}.
\end{align*}
Then, by assumptions of Theorem \ref{T:main}, we get   $\Vari{W_{2n-2}}  =  \left(1+ e^{-\omega(\log n)}\right) 
\Vari {F(\Tran)} $ 
and 
\begin{equation}\label{main:ass}
  \alpha^2 = O\left(n^{-2/3 - 2\varepsilon/3}\right)  \Vari {W_{2n-2}}, \qquad 
	\alpha^2 \rho^2 = O\left(n^{-1/2 - 2\varepsilon}\right) \Vari {W_{2n-2}}. 
\end{equation}
  Using  Lemma \ref{L:tree-mart}(a,c),   we obtain that,  for all $i\in [2n-2]$,
 \begin{equation}\label{YZ-step}
 	 W_i - W_{i-1} =  O(\alpha \log n).
 \end{equation}
% For the following bounds  we assume that  $\U \in \mathcal U_{\rm small}$. This event  happens with probability $1 - e^{-%\omega(\log n)}$ by \cref{l:small+big}.
 Let $\uvec \in [n]^{n-1} \in U_{\rm small}$. Combining Lemma  \ref{L:tree-mart}(b,d,e) and \cref{l:small+big}
  and observing $\beta(T)\leq n^2$ for all $T \in \Tset_n$,   we get that, for all $i\in [n-1]$
 \begin{align*}
 	\ran\left[ \Var_{\calF_i} [Y_{n-1}] \mid u_{\leq i-1} \right] &= O(\alpha^2 \rho^2 \log^4 n) \\
 	\ran_{\calG_{i-1}}[ Z_i(T(\uvec))] &= O(\alpha^2 \rho^2 \log^7 n) \\
 	\ran\left[ \E_{\calF_i} [V(\U)] \mid u_{\leq i-1} \right] &= O(\alpha^2 \rho^2 \log^6 n) 
 \end{align*}
 Note that, in the case of the event  $\U \in \mathcal{U}_{\rm small}$, 
  we have $W_i = Z_i(T(\U))$ and
  \[ 
    \Var [W_{2n-2} \mid \calF_{i}] = \Var_{\calF_i} [Y_{n-1}] + \E_{\calF_i} [V(\U)].
    \]
 Then, we obtain that if $\U \in \mathcal{U}_{\rm small}$ then, for all $i \in [2n-2]$,
 \[
 	\ran\left[  \Var [W_{2n-2} \mid \calF'_{i}] \mid \calF'_{i-1}\right] = O(\alpha^2 \rho^2 \log^7 n).
 \]
 Using \eqref{main:ass}, we conclude that,  with probability $1-e^{-\omega(\log n)}$,
 \begin{align*}
 	\sum_{i=1}^{2n-2} &\left( \ran\big[  \Var [W_{2n-2} \mid \calF'_{i}] \mid \calF'_{i-1}\big]  + 
 	 	 \big(\ran\left[ W_i \mid \calF'_{i-1}\right]\big)^2 
 	\right)^2 
 	\\&\qquad=
  	 O(\alpha^4 \rho^4 n \log^{14} n)  
  	 = O(n^{-4\eps}\log^{14} n) \left( \Var [W_{2n-2}]\right)^2.
 \end{align*}
 Let $\tilde{\eps} \in (0,\eps)$.  Setting $\hat{q}= n^{-2\tilde{\eps}}$ 
 and applying \cref{l:CLTbounds}, we get that,  for any $p \in[1,+\infty)$,
\begin{align*}
	 \Exp {|Q[\boldsymbol{W}]-1|^p}  =O\left( n^{-2\tilde{p\eps}} + \sup| Q[\boldsymbol{W}]-1|^p e^{-\omega(\log n)} \right).
\end{align*}
Using \eqref{YZ-step} and \eqref{main:ass}, we can bound
\[
 Q[\boldsymbol{W}] =  
  \frac{1}{\Var [W_{2n-2}]}\sum_{i=1}^{2n-2} (W_i - W_{i-1})^2 
   = O(n^{1/3}). 
  \]
  Applying \cref{T:distr} to the scaled martingale sequence $\W / (\alpha \log n)$, we get that
  \begin{align*}
  	\delta_{K}[W_{2n-2}] 
  	&= O\left( \left( \frac{\alpha^2 \log^2 n  }{  \Var [W_{2n-2}]}\right)^{3/2} \hspace{-2mm} n \log n  
  	 +  \left(n^{-2\tilde{p\eps}} + e^{-\omega(\log n)} n^{p/3}  \right)^{1/(2p+1)}
  	\right) \\
  	&= O\left(n^{-\eps} \log^4 n + n^{-2p \tilde{\eps}/(2p+1) }\right) = O(n^{-2p \tilde{\eps}/(2p+1) }).
  \end{align*}
  We can make  $2p \tilde{\eps}/(2p+1) \geq \eps'$ for any $\eps'\in (0,\eps)$ by taking  
  $\tilde{\eps}$ to be  sufficiently close to $\eps$ and  $p$ to be sufficiently large. 
  Recalling that $W_{2n-2} = F(T(\U)^{\X})$  
  with probability  $1 - e^{\omega(\log n)}$ (that is for the event   $T(\U) \in \Tset_n^{\log n}$)
   and $\Var [W_{2n-2}]  =  \left(1+ e^{-\omega(\log n)}\right) \Vari{F(\Tran)}$,  
   the required bound for $\delta_{K}[F(\Tran)]$ follows.

\begin{remark}
	The proof of Theorem \ref{T:main} can be significantly simplified under additional assumption that the tree parameter $F$ is symmetric. Namely, we would not need  the martingale sequence $\Z(T)$, the bounds of \cref{S:FF}, and we would only use parts (a), (b)  from \cref{L:tree-mart}.  In fact, a symmetric version  of  Theorem \ref{T:main} would be sufficient to cover all aplications given in Sections \ref{S:patterns} and \ref{S:auto}. 	Our decision to consider arbitrary tree parameters serves two purposes. First, the result is significantly stronger.  Second, the analysis of martingales based on functions with dependent random variables is essential for extensions to more sophisticated tree models.
\end{remark}

\begin{remark}
	Combining \cref{L:tree-mart}(a,c) and \cref{T:concentration} one can easily derive fast decreasing bounds 
	for the tail of the  distribution of $F(\Tran)$, provided a tree parameter $F$ is  $\alpha$-Lipschitz.  
	Cooper, McGrae and Zito \cite[Section 4]{CMZ2009} used a different martingale 
	construction  for trees to establsih the concentration of $F(\Tran)$ around its expectation, however they needed more	restrictive assumptions 
	about the tree parameter $F$.
\end{remark}

%
%\section{Appendix}\label{S:appendix}
%Here we collect the technical lemmas that are used in the proofs. This section is
%self-contained and does not rely on assumptions other than those stated.
%
%\subsection{Extension lemma}\label{S:extension}

 %%%%%%%%%%%%%%%%%%%%%%%%%%%%%%%%%%%%%%%%%%%%%%%%%%%%%%%%%%%%
%%%%%%%%%%%%%%%

 %%%%%%%%%%%%%%%%%%%%%%%%%%%%%%%%%%%%%%%%%%%%%%%%%%%%%%%%%%%%
%%%%%%%%%%%%%%%%%%%%%%%%%%%%%%%%%%%%%%%%%%%%%%%%%%%%%%%%%%%%
%%%%%%%%%%%%%%%%%%%%%%%%%%%%%%%%%%%%%%%%%%%%%%%%%%%%%%%%%%%%
%%%%%%%%%%%%%%%%%%%%%%%%%%%%%%%%%%%%%%%%%%%%%%%%%%%%%%%%%%%%
%%%%%%%%%%%%%%%%%%%%%%%%%%%%%%%%%%%%%%%%%%%%%%%%%%%%%%%%%%%%
%%%%%%%%%%%%%%%%%%%%%%%%%%%%%%%%%%%%%%%%%%%%%%%%%%%%%%%%%%%%

 \section{The balls in random trees are not too large}\label{S:balls}

 In this section we prove Theorem~\ref{T:beta} using martingales. 
 For a tree $T\in \Tset_n$,  let  $\Gamma^k_{T}(v)$ be the set of all vertices at distance exactly $k$ from $v$. Theorem~\ref{T:beta} follows immediately from \cref{L:X'} and \cref{balls_final_lemma} (stated below) by 
 summing over all $|\Gamma_{\Tran}^k(v)|$ for $k=1,\ldots, d$ and  using the union bound over all vertices $v \in [n]$.

Let $a>b$ be positive integers. Let $A$ be an arbitrary set of $a$ vertices from $[n]$, and $B$ be its subset on $b$ vertices. Consider event $\mathcal{E}_{A,B}$ that $A$ induces a tree and vertices of $A\setminus B$ have neighbors only in $A$. For $T\in\mathcal{T}_n$, let $\xi_{A,B}(T)$ be the number of neighbors of $B$ in $T$ outside $A$. Below, we denote the random variable $\xi_{A,B}(\Tran)$ simply $\xi_{A,B}$. 

\begin{lemma}
The conditional distribution of $\xi_{A,B}-1$ subject to  $\mathcal{E}_{A,B}$ is binomial with parameters $(n-a-1,\frac{b}{n-a+b})$.
\label{bin_leafs}
\end{lemma}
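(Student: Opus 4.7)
The plan is to compute the conditional PMF $\Pr(\xi_{A,B} = k+1 \mid \mathcal{E}_{A,B})$ exactly by counting trees, and then to recognise the resulting expression as a Binomial PMF. The main tool is \cref{L:forest}, already established in the paper.

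First I would compute $|\mathcal{E}_{A,B}|$ by decomposing the tree as $T = T_A \cup (T\setminus T_A)$, where $T_A$ is the induced subtree on $A$. Conditioning on $T_A$ first: by Cayley's formula there are $a^{a-2}$ choices for $T_A$, and for each such choice I would apply \cref{L:forest} to the forest $T_A \sqcup \{v\}_{v\notin A}$ (with marked set $B$ on the $T_A$-component, so $b_1 = b$, and all other $b_i = 1$). This yields $b(n-a+b)^{n-a-1}$ trees $T \in \Tset_n$ realising $\mathcal{E}_{A,B}$, so $|\mathcal{E}_{A,B}| = a^{a-2}\,b\,(n-a+b)^{n-a-1}$.

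Second, I would refine this count by further fixing $\xi_{A,B} = k+1$. Going back to the contraction underlying \cref{L:forest}, contracting $A$ to a super-vertex $a^{\ast}$ gives a bijection between admissible trees $T$ (for a fixed $T_A$) and pairs $(T_H,\ell)$, where $T_H$ is a labelled tree on $\{a^{\ast}\}\cup([n]\setminus A)$ and $\ell$ assigns each $a^{\ast}$-incident edge of $T_H$ to a specific vertex of $B$. Crucially, $\deg_{T_H}(a^{\ast}) = \xi_{A,B}$: since $T$ is acyclic and $A$ is connected, no external vertex can be adjacent to two distinct vertices of $B$, so distinct external edges from $B$ correspond to distinct $a^{\ast}$-incident edges in $T_H$. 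Fixing $\deg_{T_H}(a^{\ast}) = k+1$ contributes a factor $b^{k+1}$ from $\ell$ and leaves us counting labelled trees on $n-a+1$ vertices with a prescribed vertex of degree $k+1$.

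Third, by the Prüfer correspondence, the number of labelled trees on $m$ vertices where a prescribed vertex has degree $d$ equals $\binom{m-2}{d-1}(m-1)^{m-d-1}$ (the vertex appears $d-1$ times in the length-$(m-2)$ sequence over an alphabet of size $m-1$). Specialising to $m = n-a+1$ and $d = k+1$ and multiplying, the number of trees in $\mathcal{E}_{A,B}$ with $\xi_{A,B} = k+1$ is $a^{a-2}\,b^{k+1}\,\binom{n-a-1}{k}(n-a)^{n-a-k-1}$. Dividing by $|\mathcal{E}_{A,B}|$ produces
\[
  \Pr(\xi_{A,B} = k+1 \mid \mathcal{E}_{A,B}) = \binom{n-a-1}{k}\!\left(\frac{b}{n-a+b}\right)^{\!k}\!\left(\frac{n-a}{n-a+b}\right)^{\!n-a-1-k}\!,
\]
which is the Binomial$(n-a-1,\,b/(n-a+b))$ PMF at $k$, proving the claim. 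The computation is essentially bookkeeping; the only subtle point is the equality $\deg_{T_H}(a^{\ast}) = \xi_{A,B}$ in the super-vertex correspondence, which relies on the acyclicity of $T$ together with the connectedness of $A$.
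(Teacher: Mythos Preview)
Your proof is correct and follows essentially the same approach as the paper's. The only cosmetic differences are that the paper conditions on a fixed subtree $T_0$ on $A$ and observes that the conditional probability does not depend on $T_0$, whereas you sum over all $a^{a-2}$ choices of $T_A$ and let that factor cancel; and for the refined count the paper simply cites \cref{L:forest} (whose proof is precisely the contraction-plus-degree argument you spell out), while you invoke the Pr\"ufer correspondence directly.
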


\begin{proof} 
Let $T_0$ be a tree on $A$. Consider event $\mathcal{E}_{A,B,T_0}$ that $A$ induces exactly the given subtree $T_0$ and vertices of $A\setminus B$ have neighbors only in $A$. By Lemma~\ref{L:forest},
$$
 \left|\mathcal{E}_{A,B,T_0}\right|=b(n-a+b)^{n-a-1}.
$$

Let $k\in\mathbb{N}$.  By Lemma~\ref{L:forest},
$$
 \left|\{\xi_{A,B}=k\}\cap\mathcal{E}_{A,B,T_0}\right|=b^k{n-a-1\choose k-1}(n-a)^{n-a-k}.
$$

Therefore,
\begin{align*}
 \Prob(\xi_{A,B}=k \mid \mathcal{E}_{A,B})&=\frac{\sum_{T}\Prob(\xi_{A,B}=k \mid \mathcal{E}_{A,B,T})\Prob(\mathcal{E}_{A,B,T})}{\sum_{T}\Prob(\mathcal{E}_{A,B,T})}
 \\ &=\Prob(\xi_{A,B}=k \mid \mathcal{E}_{A,B,T_0})
 \\&=
 \left(\frac{b}{n-a+b}\right)^{k-1}\left(1-\frac{b}{n-a+b}\right)^{n-a-k}{n-a-1\choose k-1},
\end{align*}
which  is the required distribution.
\end{proof}

% Let us compute the number of trees on $\{1,\ldots,n\}$ such that the conditions * and ** hold and 
%\begin{itemize}
%\item[***] vertices from $B$ have exactly $k$ neighbors outside $A$. 
%\end{itemize}

%Let $T$ on $\{1,\ldots,n\}$ be a tree satisfying * and **. Consider the tree $\mathrm{R}(T)$ on the vertex set $\tilde A=\{A\}\cup \{1,\ldots,n\}\setminus A$ of size $n-a+1$ having the same adjacencies between vertices of $\{1,\ldots,n\}\setminus A$ as $T$ has and having $A\sim i$ for $i\in\{1,\ldots,n\}$ if and only if $i\sim j$ in $T$ for a certain $j\in B$. Clearly, for a tree $S$ on $\tilde A$, there are $b^{\mathrm{deg}_{S}(A)}$ trees $T$ satisfying * and ** such that $S=\mathrm{R}(T)$. Therefore, by Lemma~\ref{degree_sequence}, there are
%$$
 %\sum_{x_1,\ldots,x_{n-a}}b^k{n-a-1\choose k-1,x_1\ldots,x_{n-a-1}}=b^k{n-a-1\choose k-1}(n-a)^{n-a-k}
%$$
%trees on $\{1,\ldots,n\}$ such that $A$ induces $T_0$.

%Let $\mathcal{C}_A$ be the event that $T_n$ satisfies * and **, and $\xi_A$ be the number of neighbors of vertices from $B$ outside $A$. From the above, we get
%$$
% {\sf P}(\xi_A=k|\mathcal{C}_A)=\left(\frac{b}{n-a+b}\right)^{k-1}\left(1-\frac{b}{n-a+b}\right)^{n-a-k}{n-a-1\choose k-1}
%$$
%Therefore, $\xi_A-1$ has Bin$(n-a-1,\frac{b}{n-a+b})$ distribution conditioning on $\mathcal{C}_A$. \\

Fix a vertex $v\in [n]$. Define  the sequence of  random variables $X_0,\ldots, X_n$ by 
\[
  X_0:=1, \qquad \text{and} \qquad X_k:=|\Gamma^k_{\Tran}(v)| \text{ for all $k \in [n]$}.
  \]
From Lemma~\ref{bin_leafs}, we have $X_1-1\sim$Bin$(n-2,\frac{1}{n})$.
Notice that, for $k\geq 1$, the vertices of $\Gamma^{k+1}_{\Tran}(v)$ are adjacent only to the vertices of $\Gamma^k_{\Tran}(v)$ in $\bigsqcup_{j\leq k+1}\Gamma^j_{\Tran}(v)$. Let $(x_1,\ldots,x_k)$ be
a sequence of positive integers such that $1+x_1+\ldots+x_k\leq n$. By Lemma~\ref{bin_leafs}, if $x_1+\ldots+x_k\leq n-3$, then 
the conditional distribution of 
$X_{k+1}-1$  subject to $(X_1=x_1,\ldots,X_k=x_k)$ is  binomial with parameters $n-x_1-\ldots-x_k-2$ and $\frac{x_k}{n-x_1-\ldots-x_{k-1}-1}$. If $x_1+\ldots+x_k=n-2$, then $X_{k+1}=1$. Finally, if $x_1+\ldots+x_k=n-1$, then $X_{k+1}=0$.

\begin{lemma}\label{L:X'}
There exists a sequence $X_0=X_0',X_1',\ldots,X_n'$ such that
\begin{itemize}
\item $X_k'\geq X_k$,
\item for $k\geq 0$, the distribution of $X_{k+1}'-1$ subject to $X_j=x_j,X_j'=x_j'$, $j\in[k]$, is 
\[
\begin{cases}
\mathrm{Bin}\left(n-\sum_{j=0}^{k-1} x_j,\frac{x_k'}{n-\sum_{j=0}^{k-1} x_j}\right), &\text{if }n-\sum_{j=0}^{k-1} x_j\geq x_k',\\
x_k' \text{ with probability } 1,  &\text{otherwise}.
\end{cases}
\]

%$(x_1,\ldots,x_{k-1},y_k)\in\mathbb{Z}_+^{k-1}\times\mathbb{Z}$ such that $y_k+k\leq n-x_1-\ldots-x_{k-1}-1$, the distribution of $Y_{k+1}+k$ conditioning on $(X_1=x_1,\ldots,X_k=x_{k-1},Y_k=y_k)$ is 
%$$
% \mathrm{Bin}\left(n-X_1-\ldots-X_{k-1}-1,\frac{Y_k+k}{n-X_1-\ldots-X_{k-1}-1}\right),
%$$
%\item for $k\in\mathbb{N}$ and a vector $(x_1,\ldots,x_{k-1},y_k)\in\mathbb{N}^k$ such that $y_k+k> n-x_1-\ldots-x_{k-1}-1$, 
%$$
%{\sf P}(Y_{k+1}=y_k|X_1=x_1,\ldots,X_k=x_{k-1},Y_k=y_k)=1.
%$$ 
%$$
% \mathrm{Bin}\left(n-X_1-\ldots-X_{k-1}-1,\frac{Y_k+k}{n-X_1-\ldots-X_{k-1}-1}\right),
%$$
\end{itemize}
\label{prime_approximation}
\end{lemma}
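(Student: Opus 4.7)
The plan is to construct $X'_{k+1}$ inductively by coupling with $X_{k+1}$, using the fact that the target distribution stochastically dominates the true one. Assume inductively that we have already defined $X'_0, \ldots, X'_k$ jointly with $X_0, \ldots, X_k$ so that $X'_j \geq X_j$ for all $j \leq k$ and the conditional distribution of $X'_j - 1$ given the past has the form stated in the lemma. Condition on $X_j = x_j$ and $X'_j = x'_j$ for $j \in [k]$, and set $M := n - \sum_{j=0}^{k-1} x_j = n - 1 - x_1 - \cdots - x_{k-1}$.

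Recall that, by the discussion preceding the lemma (which follows from Lemma~\ref{bin_leafs}), the conditional distribution of $X_{k+1} - 1$ is $\mathrm{Bin}(N_1, p_1)$ with $N_1 = M - 1 - x_k$ and $p_1 = x_k/M$ in the non-trivial regime. The target distribution for $X'_{k+1} - 1$ is $\mathrm{Bin}(N_2, p_2)$ with $N_2 = M$ and $p_2 = x'_k/M$, valid whenever $M \geq x'_k$. Note $N_2 \geq N_1$ and, by the induction hypothesis $x'_k \geq x_k$, also $p_2 \geq p_1$. To couple, write $X_{k+1} - 1 = \sum_{i=1}^{N_1} B_i$ where $B_1, \ldots, B_{N_1}$ are i.i.d.\ Bernoulli$(p_1)$, and generate independent Bernoulli$(p_2)$ variables $B'_1, \ldots, B'_{N_2}$ using the standard monotone coupling: for $i \leq N_1$ use the same uniform random variable that produced $B_i$, and for $N_1 < i \leq N_2$ draw fresh uniforms. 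This ensures $B'_i \geq B_i$ for $i \leq N_1$ and $B'_i \geq 0$ otherwise, so setting $X'_{k+1} := 1 + \sum_{i=1}^{N_2} B'_i$ gives $X'_{k+1} \geq X_{k+1}$ with the prescribed conditional distribution.

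It remains to handle the degenerate case $M < x'_k$. Here we simply set $X'_{k+1} := x'_k + 1$ deterministically, matching the lemma's statement. To verify domination, observe that the total number of vertices in the first $k+1$ layers of $\Tran$ is at most $n$, so $X_{k+1} \leq n - \sum_{j=0}^{k} X_j \leq M - x_k \leq M < x'_k + 1 = X'_{k+1}$. Combined with the base case $X'_0 = X_0 = 1$, this completes the inductive construction.

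The main conceptual obstacle is simply recognising that the true branching mechanism (governed by $x_k$ on $M - 1 - x_k$ trials) is cleanly dominated by a mechanism governed by $x'_k$ on $M$ trials, with both denominators equal to $M$; once this is noted, the coupling is a textbook monotone coupling of binomials and the argument reduces to bookkeeping of the two degenerate regimes.
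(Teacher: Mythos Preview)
Your proof is correct and follows the same idea as the paper, which simply asserts in one sentence that the construction is ``straightforward since, for every $k$, we preserve the denominator of the second parameter of the binomial distribution but make the first one larger.'' You have supplied the explicit monotone coupling of $\mathrm{Bin}(N_1,p_1)$ with $\mathrm{Bin}(N_2,p_2)$ for $N_1\le N_2$ and $p_1\le p_2$ that the paper leaves implicit, and you correctly handle the boundary regime $M<x_k'$; there is nothing to add.
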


\begin{proof} 
It is straightforward since, for every $k$, we preserve the denominator of the second parameter of the binomial distribution but make the first one larger.
\end{proof}

Note that $(X_k'-k)_{k\in[n]}$ is a martingale sequence. 
Unfortunately, we can not  apply Theorem~\ref{T:concentration} directly because  every $X_k'$ ranges in a large interval (mostly for small $k$).  Instead, we  cut the tails of these random variables and construct a new martingale. To do  that  we need the following property of binomial distributions.

\begin{lemma}
Let $N$ and $a\leq N$ be positive integers, $\xi\sim\mathrm{Bin}(N,\frac{a}{N})$. Then, for every $b\in\mathbb{N}$, there exists an interval $\mathcal{I}=\mathcal{I}(N,a,b)\subset[a-b,a+b]$ such that 
\begin{itemize}
\item $\Prob(\xi\notin\mathcal{I})\leq N^2\Prob(\xi\notin[a-b,a+b])$, 
\item $\exists c\in[a-b,a+b]$ such that the function $f:\mathbb{R}\to\mathbb{R}$ defined by 
$$
f(x):=\left\{\begin{array}{cc}x, & x\in\mathcal{I} \\ c, & x\notin\mathcal{I}\end{array}\right.
%x \one_\mathcal{I}+cI (x\notin\mathcal{I})
$$ 
satisfies $\Exp {f(\xi)}=a$.
\end{itemize}
\label{cut_bin}
\end{lemma}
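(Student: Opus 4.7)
The plan is to split into cases according to the size of the tail mass $q := \Pro{\xi \notin [a-b, a+b]}$ and, in the non-trivial regime, to run a discrete intermediate-value argument on a nested family of shrinking intervals.

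If $q \geq 1/N^2$, the choice $\mathcal{I} = \emptyset$, $c = a$ works immediately: $f \equiv a$ gives $\Exp{f(\xi)} = a$, the constant $c = a$ lies in $[a-b, a+b]$, and $\Pro{\xi \notin \mathcal{I}} = 1 \leq N^2 q$. I will therefore assume $q < 1/N^2$ from now on.

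Set $c_0 := \Exp{\xi \mid \xi \notin [a-b, a+b]}$. If $c_0 \in [a-b, a+b]$, then $\mathcal{I} = [a-b, a+b]$ with $c = c_0$ does the job. Otherwise, by reflecting $\xi \mapsto N - \xi$, $a \mapsto N - a$ if necessary, I may assume $c_0 > a+b$. Introduce the shrinking intervals $\mathcal{I}_k := [a-b, k]$ for $k \in \{a-b-1, a-b, \ldots, a+b\}$ (with $\mathcal{I}_{a-b-1} = \emptyset$), the conditional expectations $c_k := \Exp{\xi \mid \xi \notin \mathcal{I}_k}$, and tail probabilities $q_k := \Pro{\xi \notin \mathcal{I}_k}$. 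A direct computation yields the recurrence
\[
c_k \;=\; \frac{c_{k+1}\, q_{k+1} \,+\, (k+1)\, p_{k+1}}{q_{k+1} \,+\, p_{k+1}},
\]
so $c_k$ is a convex combination of $c_{k+1}$ and $k+1$. Taking $k^*$ to be the largest $k$ with $c_k \leq a+b$ (this exists since $c_{a-b-1} = \Exp{\xi} = a$), the inequalities $c_{k^*+1} > a+b$ and $k^*+1 \leq a+b$ show that $c_{k^*}$ lies in $[k^*+1, c_{k^*+1}]$; since $k^* + 1 \geq a - b$, this places $c_{k^*}$ in $[a-b, a+b]$. I will then set $\mathcal{I} := \mathcal{I}_{k^*}$ and $c := c_{k^*}$.

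The remaining task, and the main obstacle, is to verify $q_{k^*} \leq N^2 q$. The plan is to exploit $c_{k^*+1} > a+b$: writing $L := \Pro{\xi < a-b}$, $R := \Pro{\xi > a+b}$, and $\mu_L := \Exp{\xi \, \one_{\xi < a-b}} \leq (a-b-1) L$, the defining inequality rearranges to
\[
\sum_{j > a+b}(j - (a+b))\, p_j \;>\; (2b+1)\, L \;+\; \sum_{k^* + 1 < j \leq a+b} ((a+b) - j)\, p_j.
\]
Since the left side is bounded by $(N - (a+b))\, R \leq NR$, this controls the ``bulk'' middle mass $\Pro{k^* + 1 < \xi < a+b}$ by $NR$. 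The endpoint contributions $p_{k^* + 1}$ and $p_{a+b}$ will be handled using the unimodality of $\mathrm{Bin}(N, a/N)$ (with mode near $a$) and the explicit ratio $p_{j+1}/p_j = (N-j)\,a / ((j+1)(N-a))$, which bounds them by a constant multiple of $R$ (or by a trivial estimate when $R$ is extremely small, using that in this regime $L$ must also be comparable). Combining these with the generous factor $N^2$ and the standing assumption $q < 1/N^2$ will yield $q_{k^*} = L + \Pro{\xi > k^*} \leq N^2 q$ with room to spare.
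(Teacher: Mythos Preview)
Your construction of $\mathcal{I}$ and $c$ via the intermediate-value argument is clean and correct: with $\mathcal{I}=\mathcal{I}_{k^*}=[a-b,k^*]$ and $c=c_{k^*}=\Exp{\xi\mid \xi\notin\mathcal{I}_{k^*}}$ you do get $c\in[a-b,a+b]$ and $\Exp{f(\xi)}=a$. The gap is in the probability bound $q_{k^*}\le N^2q$, and it is not a missing detail but a wrong choice of direction. Take $N=3$, $a=1$, $b=1$, so $\xi\sim\mathrm{Bin}(3,1/3)$ with $(p_0,p_1,p_2,p_3)=(8,12,6,1)/27$. Then $q=R=1/27<1/N^2$ and $c_0=3>a+b=2$, so you are in your main case. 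Running your recursion gives $c_2=3$, $c_1=15/7$, $c_0=27/19$, hence $k^*=0$ and $q_{k^*}=\Pr(\xi\ge 1)=19/27$, while $N^2q=9/27$. Your displayed inequality does hold here ($NR=3/27>0$), but it says nothing about the ``endpoint'' masses $p_{k^*+1}=p_1=12/27$ and $p_{a+b}=p_2=6/27$, which sit at and next to the mode and are far larger than any constant multiple of $R$. No unimodality or ratio argument can repair this: the point $k^*+1$ can land exactly on the mode.

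The underlying reason is that when $c_0>a+b$ you are trying to pull the conditional mean down by feeding the complement values that are themselves close to $a+b$; this is very inefficient and forces you to absorb a large chunk of the bulk. The paper avoids this by moving the \emph{left} endpoint instead: with $\mathcal{I}=[a-s^*,a+b]$ one is feeding the complement values near $a-b$, which pulls the mean down much faster, and the threshold inequality then bounds the added mass by the \emph{opposite} tail. Concretely, the paper obtains $s^*\Pr(\xi<a-s^*-1)<N\Pr(\xi>a+b)$ and needs only a single ratio step to cover the one boundary atom $p_{a-s^*-1}$. In the example above this yields $\mathcal{I}=[1,2]$ with $\Pr(\xi\notin\mathcal{I})=9/27=N^2q$ exactly. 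Your argument can be rescued by making the symmetric choice: when $c_0>a+b$, shrink from the left using $\mathcal{I}_k=[k,a+b]$ and take the \emph{smallest} $k$ with $c_k\le a+b$; the inequality $c_{k-1}>a+b$ then gives $\Pr(\xi<k-1)\lesssim NR$, and only the single atom $p_{k-1}$ remains to be controlled by one ratio bound, exactly as in the paper.
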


\begin{proof}
For $a=N/2$, we get the result by setting $\mathcal{I}=[a-b,a+b]$ and $c=a$.
For the following,  without loss of the generality, we may assume $a<N/2$ since the proof for $a>N/2$ is symmetric.

 Let us consider the set $\mathcal{S}$ of all integers $s$ such that
\begin{equation}
 \E\biggl[\xi \one_{\{\xi\in[a-s,a+b]\}}\biggr]\geq a\Prob(\xi\in[a-s,a+b]).
\label{in_S}
\end{equation}
It is clear that $0\in\mathcal{S}$. However, for every $x\in\mathbb{N}$, $\Prob(\xi=a-x)>\Prob(\xi=a+x)$. Indeed,
$$
  \frac{\Prob(\xi=a-x)}{\Prob(\xi=a+x)}=\frac{(1+\frac{x}{a})(1+\frac{x-1}{a})\ldots(1-\frac{x-1}{a})}{(1+\frac{x}{N-a})(1+\frac{x-1}{N-a})\ldots(1-\frac{x-1}{N-a})}>1.
$$
Therefore, $b\notin\mathcal{S}$. Let $s^*$ be the maximum integer from $\mathcal{S}$. Then,  $s^*\in[1,b-1]$ and
\begin{equation}
\E\biggl[\xi \one_{\{\xi\in[a-s^*-1,a+b]\}}\biggr]<a\Prob(\xi\in[a-s^*-1,a+b]).
\label{bin_threshold}
\end{equation}
Let us prove that $\mathcal{I}=[a-s^*,a+b]$ is the desired interval.
From~(\ref{bin_threshold}), we get 
\begin{align*}
 \E&\biggl[(a-s^*-1)\one_{\{\xi\notin\mathcal{I}\}}+\xi\one_{\{\xi\in\mathcal{I}\}}\biggr]
\\
 &=\E\biggl[(a-s^*-1)] \one_{\{\xi\notin[a-s^*-1,a+b]\}}+\xi\one_{\{\xi\in[a-s^*-1,a+b]\}}\biggr]
\\ 
 &<(a-s^*-1)\Prob(\xi\notin[a-s^*-1,a+b])+a\Prob(\xi\in[a-s^*-1,a+b])<a.
\end{align*}
Moreover, since~(\ref{in_S}) holds for $s=s^*$,
$$
 \E\biggl[a\one_{\{\xi\notin\mathcal{I}\}}+\xi\one_{\{\xi\in\mathcal{I}\}}\biggr]\geq a\Prob(\xi\notin\mathcal{I})+a\Prob(\xi\in\mathcal{I})=a.
$$
Therefore, there exists $c\in(a-s^*-1,a]$ such that $\E[cI(\xi\notin\mathcal{I})+\xi I(\xi\in\mathcal{I})]=a$.

It remains to estimate $\Prob(\xi\notin\mathcal{I})$ from above. Notice that, from~(\ref{bin_threshold}),
$$
 a\Prob(\xi\in[a-s^*-1,a+b])+(a-s^*)\Prob(\xi<a-s^*-1)+N\Prob(\xi>a+b)>a.
$$
Therefore, $s^*\Prob(\xi<a-s^*-1)<N\Prob(\xi>a+b)$. Since $2a\Prob(\xi=a-s^*-2)>\Prob(\xi=a-s^*-1)$, we get 
$$
 \Prob(\xi<a-s^*)<(2a+1)\Prob(\xi<a-s^*-1)\leq N^2\Prob(\xi>a+b),
$$
and this immediately implies that $\Prob(\xi\notin\mathcal{I})\leq N^2\Prob(\xi\notin[a-b,a+b])$.
\end{proof}

%Notice that, by symmetry reasons, an analogue of Lemma~\ref{cut_bin} holds for $a\geq N/2$: for every $k\in\mathbb{N}$, there exist a positive integer $\ell=\ell^+(a,N,k)\leq k$, a real number $c\in[a,a+\ell+1)$ and a function $f=f[a,N,k]:\mathbb{R}\to\mathbb{R}$ such that $f(x)=x$ for $x\in[a-k,a+\ell]$, $f(x)=c$ for $x\notin[a-k,a+\ell]$ and ${\sf E}f(\xi)=a$.\\

Now, we are ready to construct a martingale sequence  that coincides with $X_k'-k$ with  probability very close to $1$, but is more suitable for applying  Theorem~\ref{T:concentration}.
For every $k\geq 2$, consider the event
$$
\mathcal{B}_k:=\left\{n-\sum_{j=0}^{k-2}X_j\geq X'_{k-1}\right\}.
$$
For $\omega\in\mathcal{B}_k$, denote 
\begin{align*}
\mathcal{I}_k&:=\mathcal{I}\left(n-\sum_{j=0}^{k-2}X_j,X_{k-1}',\sqrt{X_{k-1}'}\log n\right),
\\
f_k&:=f\left(n-\sum_{j=0}^{k-2}X_j,X_{k-1}',\sqrt{X_{k-1}'}\log n\right).
\end{align*}
Let
$$
\mathcal{E}_k:=\mathcal{B}_k\cap\left(\bigcap_{j=1}^k\{X'_j-1\in\mathcal{I}_j\}\right).
$$
Define the sequence $(Y_k)_{k\in[n]}$ as follows.
Let $Y_0:=X'_0=1$. For $k\geq 1$, set 
$$
Y_k:=[f_k(X_k'-1)-(k-1)]\one_{\mathcal{E}_k}+Y_{k-1}\one_{\overline{\mathcal{E}_k}}.
$$
Using Lemma~\ref{prime_approximation} and Lemma~\ref{cut_bin}, we find that $(Y_0,Y_1,\ldots,Y_n)$ is a martingale sequence with respect to the filtration $\calF_i = \sigma(X_j,X_j' \st \,0\leq j\leq i)$ for all  $i\in\{0,1,\ldots,n\}$.  
\begin{lemma} 
\label{balls_final_lemma} Let $c>0$ be a fixed constant. Then, the following bounds hold:
\begin{enumerate}
\item[$(a)$] $\Prob(\exists k\in[n]:\,\,Y_k>k\log^4 n)\leq e^{-\omega(\log n)}$,
\item[$(b)$] $\Prob(\exists k\in[n]:\,\,Y_k\neq X_k'-k)\leq e^{-\omega(\log n)}$.
\end{enumerate}
\end{lemma}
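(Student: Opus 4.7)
The plan is to apply the concentration inequality Theorem~\ref{T:concentration} to the martingale $\boldsymbol{Y}=(Y_0,\ldots,Y_n)$ after a suitable truncation. The key preliminary observation is the nesting $\mathcal{E}_k\subseteq\mathcal{E}_{k-1}$: the intersection $\bigcap_{j\leq k}\{X_j'-1\in\mathcal{I}_j\}$ nests trivially, and failure of $\mathcal{B}_k$ forces $X_k'=X_{k-1}'+1$ via the degenerate branch of Lemma~\ref{prime_approximation}, which then propagates to failure of $\mathcal{B}_{k+1}$; reversing, $\mathcal{B}_k$ holding implies $\mathcal{B}_j$ holding for all $j\leq k$. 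Consequently, on $\mathcal{E}_k$ we have $Y_k=X_k'-k$ (because $X_k'-1\in\mathcal{I}_k$ makes $f_k(X_k'-1)=X_k'-1$) and on $\mathcal{E}_{k-1}\cap\mathcal{E}_k$ we also have $Y_{k-1}=X_{k-1}'-(k-1)$; combined with the containment $\mathcal{I}_k\subseteq[X_{k-1}'-\sqrt{X_{k-1}'}\log n,\,X_{k-1}'+\sqrt{X_{k-1}'}\log n]$ from Lemma~\ref{cut_bin} and the vanishing increment on $\overline{\mathcal{E}_k}$, this yields the bound $\mathrm{ran}_{k-1}[Y_k]\leq 2\sqrt{X_{k-1}'}\log n$.

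For part~(a), I would introduce the stopping time $\tau:=\min\{k\in[n]:Y_k>k\log^4 n\}\wedge(n+1)$ and apply Theorem~\ref{T:concentration} for each fixed $K\in[n]$ to the stopped sequence $(Y_{k\wedge\tau})_{k=0}^K$. For $k\leq\tau$ the bound $Y_{k-1}\leq(k-1)\log^4 n$ together with the identity $Y_{k-1}=X_{k-1}'-(k-1)$ on $\mathcal{E}_{k-1}$ gives $X_{k-1}'\leq k\log^4 n$, so the stopped quadratic variation is deterministically bounded by $R^2\leq 4K^2\log^6 n$. Since $\{Y_{K\wedge\tau}>K\log^4 n\}\supseteq\{\tau=K\}$, Theorem~\ref{T:concentration} with $t=K\log^4 n-1$ and $r^2=4K^2\log^6 n$ yields
\[
	\Pr(\tau=K)\leq 2\exp\bigl(-\Omega(\log^2 n)\bigr)=e^{-\omega(\log n)}.
\]
A union bound over $K\in[n]$ then completes part~(a).

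For part~(b), whenever $\mathcal{E}_k$ holds the identity $Y_k=X_k'-k$ holds by construction, and when $\mathcal{E}_{k_0}$ holds but $\mathcal{B}_{k_0+1}$ fails subsequently, the degenerate branch of Lemma~\ref{prime_approximation} forces $X_j'=X_{j-1}'+1$ for all $j>k_0$, preserving the identity $Y_j=Y_{j-1}=X_j'-j$ by induction. Hence
\[
	\{\exists k\in[n]:\,Y_k\neq X_k'-k\}\subseteq\bigcup_{j\in[n]}\bigl\{X_j'-1\notin\mathcal{I}_j,\,\mathcal{B}_j\text{ holds}\bigr\}.
\]
Conditional on $\mathcal{B}_j$, the variable $X_j'-1$ is genuinely binomial with mean $X_{j-1}'$, so Chernoff gives $\Pr(|X_j'-1-X_{j-1}'|>\sqrt{X_{j-1}'}\log n\mid\mathcal{B}_j)\leq e^{-\Omega(\log^2 n)}$; the amplification factor of at most $n^2$ from Lemma~\ref{cut_bin} then yields $\Pr(X_j'-1\notin\mathcal{I}_j,\,\mathcal{B}_j)=e^{-\omega(\log n)}$. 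A union bound over $j\in[n]$ completes part~(b).

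The main obstacle is the delicate bookkeeping of the events $\mathcal{E}_k$ and $\mathcal{B}_k$ and verifying that the degenerate branch of Lemma~\ref{prime_approximation} preserves the identity $Y_k=X_k'-k$; once this combinatorial analysis is in place, the concentration and union-bound arguments are routine.
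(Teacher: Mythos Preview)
Your proposal is correct and follows essentially the same approach as the paper. For part~(a), the paper carries out an induction on $k$ (using the inductive hypothesis to control $\Pr(R^2>r^2)$ in Theorem~\ref{T:concentration}), whereas you package the same idea via the stopping time $\tau$, which makes the bound $R^2\leq 4K^2\log^6 n$ hold deterministically on the stopped sequence. These are equivalent formulations; the stopping-time version is slightly cleaner since it avoids tracking the $(1+o(1))$ in the exponent through the induction. For part~(b) your argument is essentially identical to the paper's, and you are more explicit about why the degenerate branch of Lemma~\ref{prime_approximation} preserves $Y_k=X_k'-k$ (a point the paper leaves implicit). One minor imprecision: the Chernoff bound you quote as $e^{-\Omega(\log^2 n)}$ is only $e^{-\Omega(\log n\,\log\log n)}$ when the mean $X_{j-1}'$ is small (cf.\ Lemma~\ref{L:Ch}); this does not affect the conclusion since $e^{-\omega(\log n)}$ is all that is needed.
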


\begin{proof} 
For $(a)$, we apply Theorem~\ref{T:concentration}. First, we estimate the conditional ranges.  From Lemma~\ref{cut_bin}, we get that, for all $k\in[n]$ 
%and $\omega\in\mathcal{E}_k$, $|\mathcal{I}_k|\leq 2\sqrt{X_k'}\log n$,
$$
\mathrm{ran}_k [Y_{k+1}]\leq 2\sqrt{X_k'}\log n\, \one_{\mathcal{E}_k}=2\sqrt{Y_k+k}\log n \,\one_{\mathcal{E}_k}.
$$

We prove by induction on $k$ that ${\sf P}(Y_k> k\log^4 n)\leq\exp[-c\log^2 n]$, where $c>0$ does not depend on $k$ and $n$. For $k=1$, we have $\Prob(Y_1>\log^4 n)\leq \Prob(Y_1>\log n)=0$.

Assume that $\Prob(Y_j> j\log^4 n-j)\leq\exp[-\log^2 n(1+o(1))]$ 
for all $j\leq k$. Then, with a probability at least $1-n\exp[-\log^2 n(1+o(1))]=1-\exp[-\log^2 n(1+o(1))]$, 
$$
\sum_{j=1}^{k+1}(\mathrm{ran}_{j-1}[Y_j])^2\leq 4\log^2 n\sum_{j=0}^{k}(Y_j+j)\leq 2k^2\log^6 n.
$$
Therefore, by Theorem~\ref{T:concentration}, 
\begin{align*}
\Prob\Big(Y_{k+1}>&(k+1)\log^4 n-(k+1)\Big)
 \\
 &\leq 
2\exp\left[-\frac{(k+1)^2}{k^2}\log^2 n(1+o(1))\right]+2\exp\left[-\log^2 n(1+o(1))\right]
\\&=\exp\left[-\log^2 n(1+o(1))\right].
\end{align*}
This proves (a).

For  (b), observe that,  by the definition of $Y_k$, 
$$
\Prob(\exists k \quad Y_k\neq X'_k)=\Prob\left(\bigcup_k\mathcal{B}_k\setminus\mathcal{E}_k\right)\leq\sum_{k=1}^n\Prob\left(X'_k-1\notin\mathcal{I}_k \mid \mathcal{B}_k\right).
$$ 
%Let us prove that, for $n$ large enough and all $k$, ${\sf P}\left(\bigcup_k\mathcal{B}_k\setminus\mathcal{E}_k\right)\leq\exp[-\frac{1}{5}\ln n\ln\ln n]$.
Each term in the sum above is $e^{-\omega(\log n)}$ by \cref{L:Ch}
and  the difinition of $X'_k$ given in \cref{L:X'}. Part (b) follows.
\end{proof}
\begin{lemma}\label{L:Ch}
For $n$ large enough and all positive integers $a\leq N$, a random variable $\xi \sim \Bin(N,a/N)$ satisfies the following:
$$
\Prob(|\xi-a|>\sqrt{a}\log n)\leq\mathrm{exp}\left(-\dfrac{1}{5}\log n\log\log n\right).
$$
\end{lemma}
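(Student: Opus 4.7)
The plan is to apply the standard Chernoff--Cram\'er machinery to $\xi \sim \operatorname{Bin}(N, a/N)$ with the deviation parameter $t = \sqrt{a}\log n$, handling the upper and lower tails separately and splitting the upper tail into two regimes depending on the size of $a$ relative to $\log^2 n$.

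First I would dispose of the lower tail. Since $\xi \geq 0$, the event $\{\xi < a-t\}$ is impossible whenever $a < \log^2 n$, because then $a - t < 0$. For $a \geq \log^2 n$ the multiplicative Chernoff bound $\Pr(\xi \leq (1-\delta)a) \leq \exp(-a\delta^2/2)$ applied with $\delta = t/a = \log n/\sqrt{a} \leq 1$ gives at most $\exp(-\log^2 n /2)$, which is far smaller than the claimed bound.

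For the upper tail I would use $\Pr(\xi \geq a+t) \leq \exp(-a\,h(t/a))$ with $h(x) = (1+x)\log(1+x) - x$, which follows from the standard Cram\'er transform $\E[e^{\lambda\xi}] \leq \exp(a(e^{\lambda}-1))$ optimized over $\lambda > 0$. Writing $s := \sqrt{a}$ and $r := t/a = \log n/s$, the exponent rewrites as
\[
a\, h(t/a) \;=\; (\log n)^{2}\cdot \frac{h(r)}{r^{2}}.
\]
A short calculation shows that $h(r)/r^{2}$ is decreasing on $(0,\infty)$, with $h(1) = 2\log 2 - 1 \approx 0.386$ and $h(r)/r^{2} \sim (\log r)/r$ as $r \to \infty$. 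In the regime $a \geq \log^{2} n$ (i.e.\ $r \leq 1$) this already yields exponent at least $(2\log 2 - 1)\log^{2} n$, which dominates $(1/5)\log n \log\log n$ by a wide margin. In the regime $1 \leq a < \log^{2} n$ (i.e.\ $1 \leq r \leq \log n$), the minimum of the exponent over this range is attained at the extreme $r = \log n$, that is at $a = 1$, where
\[
a\,h(t/a) \;\geq\; h(\log n) \;=\; (1+\log n)\log(1+\log n) - \log n \;=\; \log n\,\log\log n\,(1+o(1)).
\]
For $n$ sufficiently large, this exceeds $\tfrac{1}{5}\log n\log\log n$, finishing the argument.

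The only real obstacle is bookkeeping: one has to verify monotonicity of $h(r)/r^{2}$ and check that the constant $1/5$ is simultaneously achievable in all regimes. Since the tightest case (small $a$) still provides exponent asymptotic to $\log n\log\log n$, the generous slack between $1$ and $1/5$ absorbs all lower-order corrections and makes the argument essentially a textbook Chernoff computation, with no delicate optimization.
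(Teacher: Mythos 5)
Your proposal is correct and follows essentially the same route as the paper: both apply the standard Chernoff bound $\Pr(\xi\geq a+t)\leq \exp(t-(a+t)\log(1+t/a))$ for the upper tail and the multiplicative bound $\exp(-\log^2 n/2)$ for the lower tail, with the paper leaving the final regime analysis as "straightforward to check" while you spell it out via the monotonicity of $h(r)/r^2$ and identify $a=1$ as the tight case.
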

\begin{proof}
  By the Chernoff bounds,
\begin{align*}
\Prob(\xi\geq a+\sqrt{a}\log n)&\leq\mathrm{exp}\left[\sqrt{a}\log n-(a+\sqrt{a}\log n)\ln\left(1+\frac{\log n}{\sqrt{a}}\right)\right], 
\\
 \Prob(\xi\leq a-\sqrt{a}\log n)&\leq \mathrm{exp}\left[-\frac{1}{2}\log^2 n\right]
\end{align*}
 It is straightforward to check that the stated bound holds for all possible values of $a$. 
\end{proof}

%Denoting $\sqrt{a}=\beta\ln n$, we get 
%\begin{enumerate}
%\item ${\sf P}(\eta_a\geq a+\sqrt{a}\ln n)\leq\mathrm{exp}\left[-\frac{\ln^2 n}{4}\right]$ for $\beta\geq 2$ since $\ln(1+1/\beta)>1/\beta-1/(2\beta^2)$;
%\item ${\sf P}(\eta_a\geq a+\sqrt{a}\ln n)\leq\mathrm{exp}\left[-\ln n\ln\ln n\left(3\ln\frac{3}{2}-1\right)\right]$ for $\frac{\ln\ln n}{\ln n}\leq\beta\leq 2$ since $(\beta+1)\ln(1+\frac{1}{\beta})$ decreases and achieves its minimum $3\ln\frac{3}{2}>1$ when $\beta=2$;
%\item ${\sf P}(\eta_a\geq a+\sqrt{a}\ln n)\leq\mathrm{exp}\left[-\sqrt{a}\ln n(\ln\ln n-\ln\ln\ln n-1)\right]$ for $\beta\leq \frac{\ln\ln n}{\ln n}$ since $\ln(1+1/\beta)\leq(\ln\ln n-\ln\ln\ln n)$.
%\end{enumerate}
%This finishes the proof since, for good sequences $y_1,\ldots,y_k,x_1,\ldots,x_k$, the random variable $Y_{k+1}$ has binomial distribution with the above parameters conditioning on the event $\{Y_1=y_1,\ldots,Y_k=y_k,X_1=x_1,\ldots,X_k=x_k\}$ and by the last property in Lemma~\ref{cut_bin}.\\

%Let us conclude that, with a probability at least $1-\exp[-c\ln n\ln\ln n]$,
%$$
%|B^k_{T_n}(1)|=X_k\leq Y_k=\tilde Y_k\leq k\ln^4 n.
%$$ 
%The statement of Theorem~\ref{balls} follows by the union bound and since $|B^{\leq d} {T_n}(1)|=1+\sum_{k=1}^d|B^k_{T_n}(1)|$. $\Box$

%%%%%%%%%%%%%%%%%%%%%%%%%%%%%%%%%%%%%%%%%%%%%%%%%%%%%%%%%%%%
%%%%%%%%%%%%%%%%%%%%%%%%%%%%%%%%%%%%%%%%%%%%%%%%%%%%%%%%%%%%
%%%%%%%%%%%%%%%%%%%%%%%%%%%%%%%%%%%%%%%%%%%%%%%%%%%%%%%%%%%%

%%%%%%%%%%%%%%%%%%%%%%%
%%%%%%%%%%%%%%%%%%%%%%%
%%%%%%%%%%%%%%%%%%%%%%%
%%%%%%%%%%%%%%%%%%%%%%%
%%%%%%%%%%%%%%%%%%%%%%%

\end{document}